\newtheorem{thm}{Theorem}[section]
\newtheorem{cor}[thm]{Corollary}
\newtheorem{lem}[thm]{Lemma}
\newtheorem{prop}[thm]{Proposition}
\theoremstyle{definition}
\newtheorem{dfn}[thm]{Definition}
\theoremstyle{remark}
\newtheorem{rem}[thm]{Remark}
\numberwithin{equation}{section}
\def\ca{{\mathcal A}}
\def\cb{{\mathcal B}}
\def\cc{{\mathcal C}}
\def\ce{{\mathcal E}}
\def\cf{{\mathcal F}}
\def\cg{{\mathcal G}}
\def\ch{{\mathcal H}}
\def\ck{{\mathcal K}}
\def\cl{{\mathcal L}}
\def\ct{{\mathcal T}}
\def\cu{{\mathcal U}}
\def\T{\mathbb{T}}
\def\bc{{\mathbb C}}
\newcommand{\bn}{\mathbb N}
\def\br{{\mathbb R}}
\def\bt{{\mathbb T}}
\def\bz{{\mathbb Z}}
\renewcommand{\a}{\alpha}
\def\b{\beta}
\def\g{\gamma}        \def\G{\Gamma}
\def\d{\delta}        \def\D{\Delta}
\def\eps{\varepsilon}
\def\z{\zeta}
\def\th{\vartheta}
\def\l{\lambda}       
\def\m{\mu}
\def\n{\nu}
\def\r{\rho}
\def\s{\sigma}       
\def\t{\tau}
\def\f{\varphi}
\def\o{\omega}        \def\O{\Omega}
\newcommand{\norm}[1]{\left\Vert#1\right\Vert}
\newcommand{\set}[1]{\left\{#1\right\}}
\DeclareMathOperator{\diam}{diam}
\DeclareMathOperator{\Li}{Li}
\DeclareMathOperator{\re}{Re}
\DeclareMathOperator{\im}{Im}
\newcommand{\ds}{\displaystyle}
\newcommand{\de}{\partial}
\DeclareMathOperator{\tr}{tr}
\DeclareMathOperator{\Res}{Res}
\DeclareMathOperator{\vol}{vol}
\DeclareMathOperator{\dom}{dom}
\DeclareMathOperator{\Proj}{Proj}
\def\ov{\overline}
\DeclareMathOperator{\osc}{Osc}
\DeclareMathOperator{\sgn}{sgn}
\def\const{{ const}}
\DeclareMathOperator{\ind}{Ind}
\DeclareMathOperator{\Aff}{Aff}
\DeclareMathOperator{\Ci}{Ci}%Clausen cosine function
\def\pa{p_\a}
\def\dH{d_H} % Hausdorff dimension of K
\def\dE{d_E} % energy dimension of K
\def\dD{d_D}  % metric dimension of D
\def\deD{\d_D}  % energy dimension of D
\begin{document}

\title{Spectral triples for the Sierpinski Gasket}
% %
\author{Fabio Cipriani}
\address{(F.C.) Politecnico di Milano, Dipartimento di Matematica,
piazza Leonardo da Vinci 32, 20133 Milano, Italy.} \email{fabio.cipriani@polimi.it}
\author{Daniele Guido}
\address{(D.G.) Dipartimento di Matematica, Universit\`a di Roma ``Tor
Vergata'', I--00133 Roma, Italy.} \email{guido@mat.uniroma2.it}
\author{Tommaso Isola}
\address{(T.I.) Dipartimento di Matematica, Universit\`a di Roma ``Tor
Vergata'', I--00133 Roma, Italy} \email{isola@mat.uniroma2.it}
\author{Jean-Luc sauvageot}
\address{(J.-L.S.) Institut de Math\'ematiques, CNRS-Universit\'e Denis Diderot, ÊF-75205 Paris Cedex 13, France.} \email{jlsauva@math.jussieu.fr}

\thanks{This work has been partially supported by GNAMPA, MIUR,
the European Networks ``Quantum Spaces - Noncommutative Geometry"
HPRN-CT-2002-00280, and ``Quantum Probability and Applications to
Physics, Information and Biology'', GDRE GREFI GENCO, and  the ERC Advanced Grant 227458 OACFT ``Operator Algebras and Conformal Field Theory"}
\subjclass{58B34, 28A80, 47D07, 46LXX}%
\keywords{Self-similar fractals,  Noncommutative geometry,  Dirichlet forms.}%
\date{}

% ----------------------------------------------------------------
\begin{abstract}
We construct a family of spectral triples for the Sierpi\'nski Gasket $K$.
For suitable values of the parameters, we determine the dimensional spectrum and recover the Hausdorff measure of $K$ in terms of the residue of the volume functional $a\to\tr(a\,|D|^{-s})$ at its abscissa of convergence $d_D$, which coincides with the Hausdorff dimension $d_H$  of the fractal.
We determine the associated Connes' distance showing that it is bi-Lipschitz equivalent to the distance on $K$ induced by the Euclidean metric of the plane, and show that the pairing of the associated Fredholm module with (odd) $K$-theory is non-trivial.
When the parameters belong to a suitable range, the abscissa of convergence $\d_D$ of the energy functional $a\to\tr(|D|^{-s/2}|[D,a]|^2\,|D|^{-s/2})$ takes the value $d_E=\frac{\log(12/5)}{\log 2}$, which we call {\it energy dimension}, and the corresponding residue gives the standard Dirichlet form on $K$.
%We also recover the standard Dirichlet form on $K$, as the residue of the energy functional $a\to\tr(|D|^{-s/2}|[D,a]|^2\,|D|^{-s/2})$ at its abscissa of convergence $\d_D$, which we call the {\it energy dimension}. The fact that the volume dimension differs from the energy dimension, $d_D\neq \d_D$, reflects the fact that on $K$ volume and energy are distributed singularly.
\end{abstract}
\maketitle

\section{Introduction}

The advent of Noncommutative Geometry allowed to consider from a geometrical and analytical point of view spaces which appear to be singular when analysed by using the classical tools of Differential Calculus and Riemannian Geometry.

\medskip\noindent
\begin{tabular}{l l}
    \epsfbox{{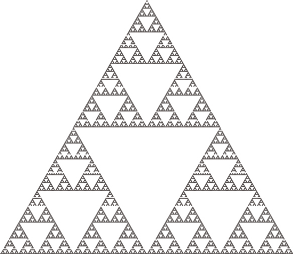}} &
    \vbox{
    \hbox{\hsize=4.4in \vbox{\lineskip=4pt\noindent
In the present paper  we approach from a NCG point of view the study of a compact subset $K$ of the plane which is a central example among fractal sets, namely the Sierpi\'nski Gasket. We associate to the gasket a family of spectral triples. For values of the parameters in a suitable range, the triple reconstructs the main known features of the gasket, namely its similarity dimension, Hausdorff measure, a distance which is bi-Lipschitz equivalent w.r.t. the Euclidean one, and the standard Dirichlet form, with the appearance of an {\it energy dimension}. Moreover, these triples pair non trivially with the K-theory of the gasket.
    }}
    }
\end{tabular}
\noindent

The fundamental topological property of $K$ is its self-similarity, by which $K$ can be reconstructed as a whole from the knowledge of any arbitrary small part of it. More precisely, considering the three similitudes $w_1 ,w_2 ,w_3$ of scaling parameter 1/2 fixing respectively the vertices $p_1 ,p_2 ,p_3$ of an equilateral triangle, one may characterize $K$ as the only compact set in $\br^2$ such that
\[
K=w_1 (K)\cup w_2 (K)\cup w_3 (K)\, ,
\]
namely $K$ is the fixed point of the map $K\mapsto w_1 (K)\cup w_2 (K)\cup w_3 (K)$ which is a contraction with respect to the Hausdorff distance on compact subsets of the plane. This allows various approximations of $K$ as, for example, the one given by finite graphs.

\medskip

\centerline{\includegraphics[width=5.0in,height=0.5in]{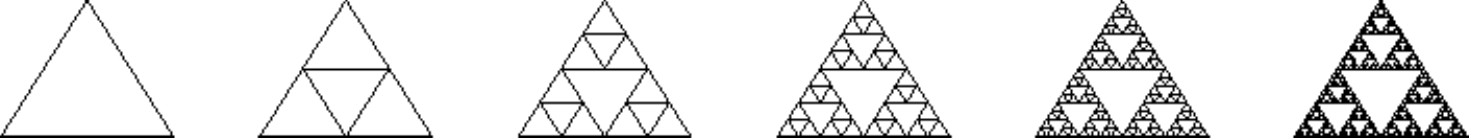}}

\medskip

The gasket $K$ was introduced by Sierpi\'nski for purely topological motivations \cite{Sierpinski}. Successively, using measure theory, it was noticed that it is a space with a non integer Hausdorff dimension $d_H=\frac{\ln 3}{\ln 2}$ \cite{Hutch}, which later attracted the attention of Probabilists, who constructed a stochastic process $X_t $ with continuous sample paths on $K$ \cite{Ku1}. The process is symmetric w.r.t. the Hausdorff measure $\mu_H$ and has a self-adjoint generator $\Delta$ in $L^2 (K,\mu_H)$, whose discrete spectrum was carefully studied by Fukushima and Shima in \cite{FS}.
Finally, Kigami \cite{Kiga} introduced on the gasket (and other fractals) the notion of harmonic structure, the most symmetric choice of which produces on $K$ the so called standard Dirichlet form, whose associated self-adjoint operator in $L^2 (K,\mu_H)$ coincides with $\Delta$.

\bigskip

We recall that the main object for the spectral description of the metric aspects of a geometry, introduced by Connes \cite{Co}, is the so called spectral triple $(\ca,\ch,D)$. It consists of an algebra $\ca$ acting on a Hilbert space $\ch$ and a self-adjoint unbounded operator $D$, the so called Dirac operator. Main requests are the boundedness of the commutators  $[D,a]$ of the elements $a\in\ca$ with $D$, and the fact that $D$ has discrete spectrum. Such triples are meant to generalize the role that, on a compact Riemannian manifold, is played by the algebra of smooth functions and by the Dirac operator acting on the Hilbert space of square integrable sections of the Clifford bundle.
 The integral $\int a\,dvol$ w.r.t. the Riemannian volume form is replaced by the functional
\begin{equation}\label{noncommvol}
\ca\ni a\mapsto\tr_\o(a|D|^{-d_D})\, ,
\end{equation}
$\tr_\o$ being the Dixmier logarithmic trace on the algebra of compact operators on a separable Hilbert space. There is a unique exponent $d_D$ (if any), called metric dimension (cf. \cite{GuIs9,CoMa}), depending on the asymptotic distribution of the eigenvalues of $D$, which gives rise to a non-trivial functional on $\ca$ via the formula above. In great generality, this functional is a positive trace on the algebra $\ca$ (see \cite{CGS}). In this way, on a manifold, one recovers the dimension and the integral. The differential 1-form $da$ of a smooth function $a$ on the manifold, is replaced by the commutator $[D,a]$, so that the Lipschitz seminorm $\|da\|_\infty$ is then replaced by the norm $\|[D,a]\|$ of the commutator. The Riemannian metric is generalized by the Connes distance between states on $\ca$, defined through the formula
\[
\r_D(\f,\psi)=\sup\{|\f(a)-\psi(a)|:\|[D,a]\|\le 1\}\,,
\]
that can be thought as generalized noncommutative Monge-Kantorovitch distance.
\par\noindent
A simple but fruitful idea is to generalize the Dirichlet energy integral of a manifold
\[
\ce[a]=\int|da|^2
\]
to the NCG setting of a spectral triple, by the ansatz:
\begin{equation}\label{marion}
\ce[a]=\tr_\o(|[D,a]|^2|D|^{-\d_D}).
\end{equation}
Similar formulas have been recently considered in \cite{PeBe,JuSa}.
Finally, we recall that spectral triples produce topological invariants. Indeed, a spectral triple gives rise to a class in the K-homology of the algebra $\ca$, hence it pairs with K-theory. Such pairing may be expressed in terms of the Fredholm module $(\ca,\ch,F)$ associated with the spectral triple, $F$ being the phase of $D$. In particular, if the triple is odd, and $u$ is an invertible element of $\ca$, the pairing is given by the index map $u\to \ind(P_+uP_+)$, where $P_+$ is the projection on the positive part of the spectrum of $F$, and $P_+uP_+$ is a Fredholm operator acting on the space $P_+\ch$.

\medskip

As mentioned above, one of the features of Noncommutative Geometry is that it non only applies to noncommutative manifolds, such as noncommutative tori, quantum groups \cite{CFK} or the various quantum spheres, but it is also able to describe some classical (e.g. dealing with points of a topological space) but singular geometries. The first example of this fact was given by Connes in \cite{Co}, where a spectral triple was assigned to the Cantor set. Such a triple could reconstruct  dimension, measure and distance of the fractal set, as well as the pairing with K-theory. The peculiar aspects of the construction are the following: the triple is a direct sum of triples associated with elementary building blocks; the building blocks are  the lacunas of the fractal, namely the boundaries of the removed intervals in the Cantor set.
The first idea can easily be adapted to self-similar fractals, by choosing as building blocks  the images of a suitable subset via compositions of similarities.
This has been exploited in \cite{CIS} for the Sierpinski gasket (as anticipated in \cite{CIL}), where the building blocks are the lacunas of the gasket, meant as the boundaries of the removed triangles in $K$. Again, dimension, measure, distance and pairing with K-theory are reconstructed in spectral terms.

Let us mention here that, as far as the volume measure is concerned, a Connes'-like formula for P.C.F.~self-similar fractals was obtained already in the paper of Kigami and Lapidus, \cite{KiLa}.

Our aim here is to add to this list the standard Dirichlet form on the gasket by making use of formula \eqref{marion}. While for regular geometries the energy form may be considered as a derived object, given the other analytic and geometric tools, on fractals the point of view is reversed.
The fundamental observation in this regard is that, for a large class of self-similar fractals, energy measures, representing the distribution of energy, are singular w.r.t. the self-similar measures, representing the distribution of the volume (cf.  \cite{BST, Ku, Ku1, Hino, HiNa}). In our noncommutative picture, this singularity is reflected in the difference between the abscissas of convergence $d_D$ and $\d_D$ in the formulas \eqref{noncommvol}, \eqref{marion}, quantities which would coincide with the dimension on Riemannian manifolds and on other "regular" spaces.

For parameters in a suitable range, $\d_D$ takes the value $d_E=\frac{\log(12/5)}{\log2}\approx1.26$, which plays the role of an {\it energy dimension}. Such dimension is thus smaller than the Hausdorff dimension $d_H:=\frac{\log3}{\log2}\approx 1.58$, and  has no apparent counterpart in the classical analysis on fractals.

Let us try to explain why we call dimension such a number $d_E$. Hausdorff dimension is, as it is said, a rarefaction index, which separates two opposite, extreme behaviors. For values above, the associated external measures vanish; for values below they are always infinite; the Hausdorff dimension is the only value (if any) which can produce a non trivial measure.
In noncommutative geometry, such a role is played by the abscissa of convergence of appropriate functionals, cf. Theorem 2.7 in \cite{GuIs9}. In the case of the volume functional $s\to \tr(a|D|^{-s})$, the abscissa $d_D$ separates the values for which the logarithmic singular trace vanishes identically from those for which such trace is infinite, the value $d_D$ being  the only value (if any) which can produce a non trivial trace functional. The same happens for our energy dimension: the abscissa of convergence  of the energy functional $s\to \tr(|[D,a]|^2|D|^{-s})$ is the only value (if any) which can reproduce the standard Dirichlet form. This happens exactly when $\d_D=\dE$.

It is interesting to notice that, in analogy to what has been discovered for the volume functional on Riemannian manifolds in \cite{LPS}, formula \eqref{marion} above, even in its symmetrized version
\begin{equation}\label{marionsymmetric}
\ce[a]=\tr_\o(|D|^{-\d_D/2}|[D,a]|^2|D|^{-\d_D/2})\, ,
\end{equation}
provides the value of the standard Dirichlet form, up to a multiplicative constant, on a suitable form core only, and not on the whole Dirichlet space of finite energy elements. On the other hand, we will show that the residue
\begin{equation}\label{marionresidue}
\ce[a]=\Res_{s=1}\tr(|D|^{-s\d_D/2}|[D,a]|^2|D|^{-s\d_D/2})
\end{equation}
does coincide, up to a multiplicative constant, with the standard Dirichlet form for all finite energy elements.

Let us mention here that the spectral triple for the gasket proposed in \cite{GuIs10} Remark 2.14, whose building blocks were spectral triples associated with the boundary points of the edges of the gasket, could indeed produce the standard Dirichlet form exactly as above, with the same energy dimension. However, being based on a discrete approximation of $K$, it could not give rise to any pairing with K-theory.

We remark that noncommutative geometry provides also a replacement for the de~Rham cohomology in terms of cyclic cohomology, however we do not pursue this direction here. Indeed differential forms have no classical analogue on fractals, and their study in this case is essentially based on the differential calculus developed in \cite{CiSa}. There, the authors associate a bimodule-valued derivation to a regular Dirichlet form and define differential 1-forms as the elements of the bimodule. In \cite{CiSa2} this first order differential calculus on p.c.f. fractals has been developed further to a pseudo differential calculus by the construction of Fredholm modules associated to the Dirichlet form. Recent developments in this direction are also contained in \cite{IRT}, while in a recent paper of ours \cite{CGIS1} concerning the gasket, we give a more concrete description of the 1-forms of \cite{CiSa} in such a way as to define their integrals on paths and their generalized potentials on suitable coverings and develop a Hodge-de Rham theory on the gasket.

\bigskip

We now come to a more detailed description of our family of spectral triples. As in \cite{CIS}, our building blocks are associated with the lacunas (boundaries of removed triangles) of the gasket, canonically identified with circles.
The starting point is the observation that formula \eqref{marionresidue} may recover the standard Dirichlet form on $K$ if and only if the standard triple on the circle is deformed (cf. Lemma \ref{traceClass} and Theorem \ref{thm:res=en}).
From the technical point of view, this is based on a single but fundamental result by A. Jonsson \cite{Jo}, which states that the restriction to lacunas of finite energy functions on $K$ with respect to the standard energy form  belongs to a suitable fractional Sobolev space. Accordingly, we deform the classical spectral triple for the circle $\T$, by replacing the standard Laplacian $\D$ with its roots $\D^\a$ for suitable $\a$'s in $(0,1]$. Notice that, by the theory of Dirichlet forms (see \cite{FOT}), the quadratic form $\ce_\a$ on $L^2 (\T)$ determined by the self-adjoint operator $\D^{\a}$ is a Dirichlet form. As proposed in \cite{CiSa}, we construct a bimodule-valued derivation $\partial_\a$, characterized by
\[
\ce_\a [a]=\|\partial_\a a\|^2\, ,\qquad \D^\a =\partial^*_\a\circ\partial_\a\, ,
\]
and define a Dirac operator on $\T$ as
\[
D:=\begin{pmatrix}0&\partial_\alpha\\\partial_\alpha^*&0\end{pmatrix}\, .
\]
While this deformation does not quantize the algebra, which remains $C(\T)$, a zest of noncommutativity is nonetheless present, since the left and right actions of functions on the Hilbert space do not coincide (functions do not commute with forms). This is related to the fact that, while for $\a=1$ the distributional kernel giving rise to the energy on $\T$ is supported on the diagonal, this is no-longer true for $\a<1$. In probabilistic terms, the stochastic process on $\T$ generated by $\Delta^\alpha$ is a diffusion (i.e. has continuous paths), when $\alpha =1$, while it is purely jumping, when $\alpha <1$.

A second deformation parameter $\b$ may be introduced, by replacing the scaling parameter $1/2$  for the gasket with $2^{-\beta}$. Even though also this deformation produces interesting spectral triples, as explained below, such deformation is not really needed, since the value $\beta=1$ is not only acceptable, but is the only one which corresponds to the metric gasket embedded in $\br^2$. Therefore, apart form the comments here, the results concerning $\beta\ne1$ are confined in Section \ref{betaalpha}.

An unexpected outcome of the construction is that the two parameters have a quite different role for the gasket as a whole. Indeed, $\a$ only plays the role of a threshold parameter. The condition $\a\leq\a_0=\frac{\log(10/3)}{\log 4}$ is a necessary condition for formula \eqref{marionsymmetric} to be finite for finite energy functions, and to reproduce the standard Dirichlet form. If, furthermore, $\a>\left(2-\frac{\log(5/3)}{\b \log 2} \right)^{-1}$, one gets a full-fledged spectral triple, whose features only depend on $\b$, which assumes the role of a deformation parameter.
\par\noindent
In fact, for $\a_0<\b\leq1$, the Connes metric $\r_{D,\b}$ is bi-Lipschitz w.r.t. the Euclidean distance raised to the power $\b$ or, equivalently, bi-Lipschitz w.r.t the geodesic metric, induced on $K$ by the Euclidean metric, raised to the power $\b$. Consequently,  the metric dimension is given by $d_{D,\b}=d_{D,1}\cdot \b^{-1}$, and, as expected, the volume measure $\m_{D,\b}$ coincides (up to a multiplicative constant) with the Hausdorff measure for the dimension $d_{D,\b}$, which in turn coincides with the Hausdorff measure for the dimension $d_H$. The energy dimension is given by $\d_{D,\b}=2-\frac{\log(5/3)}{\log 2}\b^{-1}$, and the corresponding energy form  do not even depend on $\b$, apart from a multiplicative constant.

Neither $\a$ nor $\b$ affect the pairing with K-theory. However we had to tackle another difficulty concerning the Fredholm module associated with the spectral triple. In fact, in order to implement the deformation associated with the parameter $\a$, we had to choose the Hilbert space as the module of differential forms, making the triple (and the Fredholm module) an even one. To recover the pairing with odd K-theory, we have to add a further grading, obtaining a 1-graded Fredholm module, which then has the correct pairing with odd K-theory.

We now discuss the relation with previous constructions. If we denote by $D_{\alpha,\beta}$ the Dirac operator on the gasket associated with the parameters $\alpha$ and $\beta$, one of the Dirac operators considered in \cite{CIS} is essentially equivalent to $D_{1,1}$, and a simple relation holds:  $|D_{1,1}|^\alpha=|D_{\alpha,\alpha}|$. But the only deformation that is needed is that of the Dirac on the circle, namely of the first parameter, and the choice $\alpha=\beta$ is not compatible with the reconstruction of the energy, cf. Section 5.
Moreover,  we needed to construct a suitable derivation $\partial_\alpha$ on the circle in such a way that $\partial_\alpha^*\partial_\alpha=\Delta^\alpha$, $\Delta$ being the Laplacian on the circle, and then to define the deformed Dirac operator on the circle as the anti-diagonal matrix in formula \eqref{Dalpha}. This guarantees  not only the correct spectral behavior, but also  suitable commutation relations of the global Dirac $D_{\alpha,1}$ on the gasket with the elements of the algebra $\mathcal A$.

We conclude this review about the dependence of our constructions on $\a$ and $\b$ by noticing that the requests concerning spectral triple properties reflect into independent bounds on the parameters, which finally give rise to a quite small fraction of the $(\b,\a)$-plane. The fact that this set is indeed non empty is not at all obvious, and only an analysis of a larger family of fractals and their Dirichlet forms may reveal the reasons of its non-triviality.

We note here that our triples indeed violate one of the requests of a spectral triple as defined in \cite{Co}, since the kernel of the Dirac operator is infinite dimensional. However, this degeneracy of the kernel does not cause any harm in the construction, when taking the point of view of reading $|D|^{-s}$ as the functional calculus of $D$ with the function $f(t)=0$ for $t=0$ and $f(t)=|t|^{-s}$ for $t\ne0$.

The question is more subtle when the associated Fredholm module is concerned. Indeed, denoting by $P_{\pm}$ the projection on the positive, resp. negative, part of the spectrum of $D$, the two formulas for the pairing of the module with the K-theory class of an invertible element $u$ given by $\ind(P_+\pi(u)P_+)$ and $-\ind(P_-\pi(u)P_-)$, which are  equivalent when the dimension of the kernel of $D$ is finite, may be expected to differ. We call  a Fredholm module  {\it tamely degenerate} when such equality holds, hence  the kernel of $D$ is irrelevant from the K-theoretical point of view, and check that this condition is satisfied for our triples.

We describe now some technical aspects of our construction. First, in order to construct a Dirac operator for the $\a$-deformed triples on the circle, we had to define a differential square root of $\D^\a$, or, in other terms, a derivation implementing the corresponding Dirichlet form. This has been done by realizing the corresponding Dirichlet form in terms of an  integral operator, whose distributional kernel is written in terms of a special function, the so called Clausen cosine function $\Ci_s$. We show that $-\Ci_{-2\a}\geq0$, for $0<\a<1$, and describe the Dirac operator in terms of the derivation $\partial_\a$ given by $\partial_\a f (x,y)=(-2\pi\Ci_{-2\a}(x-y))^{1/2}(f(x)-f(y))$. By means of some explicit estimates on $\Ci_\a$ we can show the relation of the Connes' distance for the $\a$-deformed circle and the $\a$-power of the Riemannian distance. In this sense, our deformed circles may be considered as quasi-circles, since the $\a$-power of the Riemannian distance clearly satisfies the so-called reverse triangle inequality  \cite{Ahlfors}. As for the case of the gasket described above, the $\a$-deformation rescales the Hausdorff dimension of the circle and leaves the volume invariant (up to a multiplicative constant).

Second, our study of the noncommutative formula for the standard Dirichlet form produces an interesting situation when Dixmier traces are concerned. Indeed, when used to describe the volume form in noncommutative geometry, the Dixmier trace is computed for elements which belong to the principal ideal of compact operators generated by $|D|^{-d}$, and the same happens for the computation of the energy form according to formula \eqref{marion} when regular spaces are considered, namely when the metric dimension $d_D$ and the energy dimension $\d_D$ coincide. It is known that the theory of singular traces on principal ideals (\cite{ Varga, AlGuPoSc, GuIs} etc.) is in a sense simpler than the corresponding theory on symmetrically normed ideals. In the case of fractal spaces however, there is no principal ideal containing all elements of the form $|[D,f]|^2|D|^{-\d}$, and Dixmier traces on symmetrically normed ideals and the analysis in \cite{CRSS,CPS} play a key role.

As for the organization of the paper,  it consists of this introduction, four sections, and an appendix. The first section is devoted to some results on (possibly degenerate) spectral triples and Fredholm modules,
the second to the description of the $\a$-deformed circles,  the third to the construction and results of the triples on the gasket, and the fourth to the two-parameter triples. The appendix contains some estimates concerning the Clausen functions.

The results contained in this paper have been described in several conferences, such as Cardiff 2010, Cambridge 2010, Cornell 2011, Paris 2011, Messina 2011, Cortona 2012, Marseille 2012.

\section{Spectral Triples}

\subsection{Spectral Triples and their Fredholm Modules}

We generalize here the notion of Spectral Triple and of Fredholm module, by allowing the Dirac operator $D$ to have an infinite dimensional kernel. This generalization, with respect to the situation usually considered in the literature, will be useful later on, when we will construct Spectral Triples on the circle and the Sierpinski Gasket, whose Dirac operators have an {\it infinite dimensional kernel}. Because of that, some extra work will be needed to construct an associated Fredholm module having nontrivial pairing with $K$-theory.
\vskip0.2truecm\noindent
We recall that the notion of Fredholm module $(F, \pi ,\ch)$ on a compact topological spaces $K$ is a generalization of the theory of elliptic pseudodifferential operators on a compact manifold. In its {\it odd} form, one requires that the elements $f$ of the algebra of continuous functions $C(K)$ are represented as bounded operators $\pi (f)$ on a Hilbert space $\ch$ on which, moreover, a distinguished self-adjoint operator $F$ of square 1  is considered, the {\it symmetry}, in such a way that the commutators $[F,\pi(f)]$ are compact operators.
\par\noindent
In its simplest example, $F$ is the Hilbert transform, a $0$-order, pseudodifferential operator acting on the space of square integrable functions on the circle. When continuous functions are considered as multiplication operators acting on the same Hilbert space, the compactness of the commutators above results from the fundamental theorem of the theory of pseudo-differential operators, according to which commutators of $0$-order operators have order $-1$ and are thus compact.
\par\noindent
As a consequence of the theory, operators like $FuF$, constructed by (matrix ampliations) of $F$ and continuous functions $u$  with values in unitary matrices, are Fredholm operators and their indexes give a pairing between the Fredholm module $(F, \pi , \ch)$ and the odd K-theory group generated by continuous functions on $K$ with values in unitary matrices. In the even case, indexes constructed similarly to those above give a pairing between the Fredholm module $(F, \pi , \ch)$ and the even K-theory group, generated by continuous functions on $K$ with values in the space of matrix projections. By a famous theorem due to Serre and Swan, elements of the even K-theory group have a direct geometric interpretation in terms of equivalence classes of locally trivial vector bundles on $K$.
\par\noindent The notion of Fredholm module, introduced by Atiyah \cite{At} on compact  spaces and generalized  to C$^*$-algebras by Mishchenko \cite{Mis}, Brown-Douglas-Fillmore \cite{BDF}, and Kasparov \cite{Kas}, lies at the core of the Connes' noncommutative differential geometry \cite{Co}, where the operator $df:=i[F,\pi (f)]$ is the operator theoretical substitute for the differential of the function $f$.
\par\noindent
On the other hand, the notion of spectral triple $(\ca, \ch, D)$ is meant to encode the {\it metric} aspects of a generalized geometry. To remain in a commutative framework, where the relevant C$^*$-algebra is $C(K)$ as above, a suitable dense subalgebra $\ca$ of $C(K)$ is required to act on a Hilbert space $\ch$ together with an unbounded, self-adjoint operator $D$, having discrete spectrum, in such a way that the commutators $[D,f]$ are bounded for all $a\in \ca$.
\par\noindent
The archetypical example of a spectral triple arises in Riemannian geometry where $D$ is the Dirac operator on the Clifford bundle and the boundedness of the commutator above is equivalent to the Lipschitz continuity of the function involved.
\par\noindent
The metric aspects of a space endowed with a spectral triple are recovered by regarding the compact operator $|D|^{-1}$ as the operator theoretic realization of the infinitesimal arc element $ds$, whereas its local features are recovered through the asymptotic behavior of the spectrum of $D$ and related operators.

\begin{dfn}(Spectral Triple)
A (possibly kernel-degenerate, compact) Spectral Triple $(\ca, \ch, D)$ consists of an involutive unital algebra $\ca$, acting faithfully on a Hilbert space $\ch$ trough a representation $\pi$, and a self-adjoint operator $(D, \dom (D))$ on it, subject to the conditions
\begin{itemize}
\item[$(i)$] the commutators $[D,\pi(a)]$, initially defined on the domain $\dom (D)\subset\ch$ through the sesquilinear forms
\[
(\xi,[D,\pi(a)]\eta):=(D\xi ,\pi(a)\eta)-(\pi(a)^*\xi,D\eta)\qquad \xi ,\eta\in dom (D)\, ,
\]
extend to bounded operators on $\ch$, for all $a\in\ca$;
\item[$(ii)$] the operator $D^{-1}$ is compact on $\ker(D)^\perp$.
\end{itemize}
The operator $(D, \dom (D))$ is referred to as the {\it Dirac operator} of the Spectral Triple.
\end{dfn}
Notice that if ${\rm ker}\, (D)$ is finite dimensional the condition in $(ii)$ reduces to the compactness of the operators $(I+D^2)^{-1}$. We recover in this way the original definition of a Spectral Triple by Connes \cite{Co}.
%The extended version  is required to deal with the Dirac operators we will consider on quasicircles and on the Sierpinski Gasket.
%\par\noindent
%Notice also that the requirement in $(ii)$ amounts to the discreteness of the spectrum of the Dirac operator $D$.
%
%\vskip0.2truecm
%
%\begin{ex} \label{DiracOnManifold}
%As an example of Spectral Triple, where the algebra is commutative, consider the algebra $\ca :=C^\infty (M)$ of smooth functions on a compact Riemannian manifold $M$, acting on the Hilbert space $\ch :=L^2 (\Lambda (M))$ of square integrable sections of the exterior bundle over $M$, as well the self-adjoint operator $D:=d+d^*$, sum of the exterior differential $d$ and its adjoint. Since  the square of the Dirac operator is just the Hodge-de Rham operator of $M$,
%\[
%D^2 =(d+d^*)^2 = dd^* +d^* d =\Delta_{HdR}\, ,
% \]
%condition $(ii)$ is verified by the discreteness of the spectrum of $\Delta_{HdR}$, which follows from the compactness of $M$. The rules of differential calculus on $M$ allow to easily verify that the norm of a commutator coincides with the Lipschitz semi-norm of a function
%\[
%\|[D,a]\|=\sup\{|d_xa|_{T^*_x M} :x\in M\}\, .
%\]
%This also shows that the Riemannian distance on $M$ can be recovered from the Spectral Triple:
%\[
%d_M(x,y)=\sup\{ |a(x)-a(y)| : a\in\ca, \|[D,a]\|\leq 1 \}\qquad x,y\in M\, .
%\]
%Notice that in this case the kernel of $D$ is just the space of harmonic forms on $M$.
%\end{ex}

\begin{dfn}(Fredholm Module) \label{FredholmModule}
A (possibly kernel-degenerate) Fredholm Module $(\ch,\pi,F)$ over a C$^*$-algebra $A$ consists of  a Hilbert space $\ch$, a representation $\pi:A\to\cb(\ch)$, and a bounded operator $F\in\cb(\ch)$ such that
\begin{itemize}
\item[$(i)$] $F^2 -I$ is a compact operator on ${\rm ker}\, (F)^\perp$,
\item[$(ii)$] $F^* -F$ is a compact operator,
\item[$(iii)$] the commutators $[F,\pi(a)]$ are compact operators, for all $a\in A$.
\end{itemize}
%It follows from the completeness of the space of compact operators in $\cb(\ch)$ that, if $(\ch,\pi,F)$ is a Fredholm module over $\ca$, then it extends automatically to a Fredholm module over the C$^*$-algebra $\overline\ca$, closure of the algebra $\ca$ in  $\cb(\ch)$.
\end{dfn}
\begin{rem}
In the following, in many occasions, the representation $\pi$ will be omitted, the $*$-algebra $\ca$ and the C$^*$-algebra $A$ being identified with subalgebras of $\cb(\ch)$.
\end{rem}

The classical formulation of Atiyah is recovered when $\ker(F)$ is finite dimensional. Again, the above generalization is required to deal with the Fredholm Modules we will construct on quasicircles and on the Sierpinski Gasket.
%\vskip0.2truecm

%\begin{ex}
%As an example of Fredholm Module,  consider the algebra $A :=C(M)$ of continuous functions on a compact Riemannian manifold $M$, acting on the Hilbert space $\ch :=L^2 (\Lambda (M))$ of square integrable sections of the exterior bundle over $M$, as well the sign $F:=\sgn (D)$ of the Dirac operator $D:=d+d^*$ considered in Example \ref{DiracOnManifold}. Here $F$ is self-adjoint, $\ker (F)$ is the finite dimensional space of harmonic forms, so that $F^* =F$, $F^2 -I$ is a finite rank operator, and the first two requirements for a Fredholm Module are fulfilled. The third one follows from the fact that the commutator $[F,a]$ is clearly a pseudo-differential operator of order $-1$ on $M$, so that it is a compact operator, by a well known result of Analysis.
%\end{ex}

\vskip0.2truecm

A classical result by Baaj and Julg \cite{BaJu} shows that Spectral Triples give rise to Fredholm modules by taking $F=\sgn(D)$ (or any other function which is asymptotic to $\sgn(t)$ for $|t|\to\infty$), whenever ${\rm dim\,ker}\,(D)<+\infty$. We need to generalize this result to allow ${\rm dim\,ker}\,(D)=+\infty$. The key point is to show that the boundedness of the commutator $[D,a]$ implies the compactness of $[F,a]$ even if $\ker(D)$ is not finite dimensional.

\begin{prop} \label{triple-module}
Let $(\ca, \ch, D)$ be a (possibly kernel-degenerate) Spectral Triple over  a unital $*$-algebra $\ca\subset\cb(\ch)$. Then, setting $F:= \sgn(D)$, $(F, \ch)$ is a (possibly kernel-degenerate) Fredholm module over the C$^*$-algebra $A$ given by the norm closure $\overline\ca$ of the $*$-algebra $\ca$.
\end{prop}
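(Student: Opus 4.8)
The plan is to verify the three defining properties of Definition \ref{FredholmModule} for $F=\sgn(D)$, the only nontrivial one being the compactness of the commutators $[F,a]$; the first two properties are immediate, and the third is the Baaj--Julg argument performed so as to tolerate an infinite dimensional $\ker(D)$. First I would dispose of $(i)$ and $(ii)$: since $D$ is self-adjoint and $\sgn$ is real-valued, $F=\sgn(D)$ is self-adjoint, so $F^*-F=0$ is compact, giving $(ii)$; and since $\sgn(0)=0$ in the functional calculus, $\ker(F)=\ker(D)$ while on $\ker(F)^\perp=\ker(D)^\perp$ one has $\sgn(t)^2=1$, so $F^2=I$ there and $F^2-I$ vanishes on $\ker(F)^\perp$, giving $(i)$.

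The core is $(iii)$. I would start from the integral representation $F=\frac{2}{\pi}\int_0^\infty DR_\lambda\,d\lambda$, where $R_\lambda:=(D^2+\lambda^2)^{-1}$ and the integral converges strongly by the spectral theorem, since $\frac2\pi\int_0^\infty t(t^2+\lambda^2)^{-1}\,d\lambda=\sgn(t)$ (the integrand vanishing at $t=0$, consistently with $\sgn(0)=0$). A resolvent commutator computation — most cleanly carried out on the complex resolvents $(D\pm i\lambda)^{-1}$ via $[(D\pm i\lambda)^{-1},a]=-(D\pm i\lambda)^{-1}[D,a](D\pm i\lambda)^{-1}$ and then recombined, which both avoids domain issues and exhibits the cancellation of the $\lambda^{-1}$ singularities — yields, with $b:=[D,a]$,
\[
[DR_\lambda,a]=\lambda^2R_\lambda\,b\,R_\lambda-DR_\lambda\,b\,DR_\lambda ,
\]
an identity involving only the bounded operators $R_\lambda$, $DR_\lambda$ and $b$.

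Next I would prove that each $[DR_\lambda,a]$ is compact, which is where the degeneracy of $D$ must be handled. Let $P$ be the projection onto $\ker(D)$ and $Q=I-P$. By condition $(ii)$ of the spectral triple, $D^{-1}$ is compact on $\ker(D)^\perp$, so the eigenvalues of $D$ there are bounded away from $0$ by some gap $g>0$ and tend to $\infty$; consequently $DR_\lambda=DR_\lambda Q$ and $R_\lambda Q$ are compact, whereas $R_\lambda P=\lambda^{-2}P$ is not. Writing $R_\lambda=\lambda^{-2}P+R_\lambda Q$ and expanding, the only term of $\lambda^2R_\lambda bR_\lambda$ lacking a compact factor is $PbP$; but $PbP=P[D,a]P=0$, because for $\xi,\eta\in\ker(D)$ the defining sesquilinear form gives $(\xi,[D,a]\eta)=(D\xi,a\eta)-(a^*\xi,D\eta)=0$. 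Hence the non-compact contribution vanishes and every surviving term carries a factor $R_\lambda Q$ or $DR_\lambda$, so $[DR_\lambda,a]\in\ck(\ch)$.

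Finally I would control the integral in norm. Using $\norm{R_\lambda Q}\le (g^2+\lambda^2)^{-1}$ and $\norm{DR_\lambda Q}\le\min(1/(2\lambda),1/g)$, the surviving terms admit bounds of the form $M(g^2+\lambda^2)^{-1}$, $M\lambda^2(g^2+\lambda^2)^{-2}$ and $M\min(1/(2\lambda),1/g)^2$ with $M=\norm{b}$, all integrable on $(0,\infty)$ — the gap $g>0$ removing the singularity at $\lambda=0$. Thus $\int_0^\infty\norm{[DR_\lambda,a]}\,d\lambda<\infty$, so the Bochner integral $\frac2\pi\int_0^\infty[DR_\lambda,a]\,d\lambda$ lies in the closed ideal $\ck(\ch)$; since norm convergence forces agreement with the strong limit $[F,a]$, we get $[F,a]\in\ck(\ch)$ for $a\in\ca$, and as $a\mapsto[F,a]$ is norm-continuous and $\ck(\ch)$ is closed, $(iii)$ extends to all $a\in\overline\ca$. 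I expect the main obstacle to be precisely this kernel step: verifying $P[D,a]P=0$ from the form definition of $[D,a]$ and using the automatic spectral gap at $0$ (a consequence of condition $(ii)$) to make the resolvent integral norm-convergent; the remaining estimates are routine.
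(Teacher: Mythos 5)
Your proof is correct and follows essentially the same route as the paper: the Baaj--Julg integral representation $F=\frac2\pi\int_0^{\infty} D(t^2+D^2)^{-1}\,dt$, the identical resolvent-commutator identity splitting $[F,a]$ into the terms $t^2R_t[D,a]R_t$ and $DR_t[D,a]DR_t$, and then compactness of the integrand together with an integrable norm bound. The only difference is one of bookkeeping: you neutralize the sole non-compact contribution by the clean observation $P[D,a]P=0$ (read off from the form definition of the commutator on $\ker D$), whereas the paper shows that the integrand restricted to $\ker D$ reduces to the compact operator $\frac{D}{t^2+D^2}\,a$ and argues by continuity of the compact-valued integrand at $t=0$ --- the same cancellation, organized slightly differently.
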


\begin{proof}
Since $F$ is self-adjoint and $F^2 -I$ vanishes  on ${\rm ker}\, (F)^\perp = {\rm ker}\, (D)^\perp$ by construction, the first two requirements in the Definition \ref{FredholmModule} of a Fredholm module hold true.
\par\noindent
To verify the third requirement, let us first observe that $\frac1{\sqrt x} = \frac2\pi\, \int_0^{+\infty} \frac{dt}{x+t^2}$ for any $x>0$, from which it follows that $F = \frac2\pi\, \int_0^{+\infty} \frac{D}{t^2+D^2} \, dt$, where the integral converges in norm since
\[
\frac2\pi\, \int_k^{+\infty} \frac{D}{t^2+D^2}\, dt =
\begin{cases}
F\big(I-\frac{2}{\pi} \arctan{\big(  k|D|^{-1} \big)} \big) & \text{on}\ (\ker D)^\perp \\
0 & \text{on}\ \ker D
\end{cases}
\]
hence
\[
\|\frac2\pi\, \int_k^{+\infty} \frac{D}{t^2+D^2}\, dt \|\le 1-\frac{2}{\pi}\arctan{\big(\frac{k}{\lambda}\big)}
\]
where $\lambda >0$ is the first non-zero eigenvalue of $|D|$. Let now $a$ belong to $\ca$, so that $a\dom(D)\subset\dom(D)$ and $Da-aD$ is bounded on $\dom(D)$.
Our aim is to prove first that the operator defined as
\[
 C:=\frac2\pi\, \int_0^{+\infty}
\frac{t}{t^2+D^2} \, [D,a] \,  \frac{t}{t^2+D^2}\, dt
- \frac2\pi\, \int_0^{+\infty} \frac{D}{t^2+D^2} \, [D,a] \,  \frac{D}{t^2+D^2} \, dt
\]
is well defined and compact and then to show that it coincides with the commutator $[F,a]$.
\par\noindent
Observe that $t\in(0, +\infty)\mapsto D(t^2+D^2)^{-1}$ is a $\ck(\ch)$-valued continuous function that can be continuously extended to $[0,+\infty)$ by assigning to it the value $D/D^{2}\in\ck (\ch)$ at $t=0$. Here we are denoting by $D/D^{2}$ the compact operator which is the inverse of $D$ on  ${\rm ker}\, (D)^\perp$ and vanishes on ${\rm ker}\, (D)$. Indeed, with  $\l$ as above, we have
$$
\Big\| \frac{D}{t^2+D^2}-\frac{D}{D^2} \Big\|
= \Big\| \frac{t^2}{(t^2+D^2)}\frac{D}{D^2} \Big\|
= \frac{t^2}{(t^2+\l^2)\l}\to 0\qquad t\to 0\, .
$$
Since $\ck (\ch)$ is a closed subspace of $\cb (\ch)$, and
$$
\Big\| \frac{D}{t^2+D^2} \, [D,a] \,  \frac{D}{t^2+D^2} \Big\|
=\|[D,a]\|\cdot \Big\| \frac{D}{t^2+D^2} \Big\|^2
\leq\frac14\|[D,a] \| \,t^{-2} \in L^1([1,\infty))\, ,
$$
we have that $\frac2\pi\, \int_0^{+\infty} \frac{D}{t^2+D^2} \, [D,a] \,  \frac{D}{t^2+D^2}  dt$ is a compact operator.
\vskip0.2truecm\noindent
On the other hand, $t\in(0,+\infty)\mapsto t(t^2+D^2)^{-1} \, [D,a] \,  t(t^2+D^2)^{-1} $ is a $\ck(\ch)$-valued continuous function which can be extended continuously to $t=0$. Indeed, when restricted to ${\rm ker}\, (D)^\perp$ it appears as a continuous function of $t\in [0,+\infty)$ of products of operators in which at least one factor is compact, and when restricted to $\ker (D)$ it reduces to
\[
\frac{t}{t^2+D^2} \, [D,a] \,  \frac{t}{t^2+D^2} = \frac{t}{t^2+D^2} \, (Da-aD) \,  \frac{t}{t^2+D^2} =
\frac{t}{t^2+D^2} \, Da \,  \frac{1}{t}  = \frac{D}{t^2+D^2} \, a
\]
so that it converges in $\cb (\ch)$ to the compact operator $(D/D^{2})\, a$ as $t\to 0$. Since, again, $\ck (\ch)$ is a closed subspace of $\cb (\ch)$ and
$$
\Big\| \frac{t}{t^2+D^2} \, [D,a] \,  \frac{t}{t^2+D^2} \Big\|
=\|[D,a]\|\cdot \Big\| \frac{t}{t^2+D^2} \Big\|^2
\leq\|[D,a]\|\,t^{-2}\in L^1 ([1,\infty))\, ,
$$
we have that $ \frac2\pi\, \int_0^{+\infty} \frac{t}{t^2+D^2} \, [D,a] \,  \frac{t}{t^2+D^2}\, dt$ is a compact operator.
\par\noindent
By the formulas above we have
\[
[F,a] = \frac2\pi\, \int_0^{+\infty} \bigg( \frac{D}{t^2+D^2} \, a - a \,  \frac{D}{t^2+D^2} \bigg)\, dt
\]
where the integral converges in norm. Therefore the identity $C=[F,a]$ follows if we prove that in the integral representations of $C$ and $[F,a]$, the integrands coincide as quadratic forms.
\par\noindent
In fact, for fixed $\xi,\eta\in \ch$, the vectors $\frac{t}{t^2+D^2}\xi\, ,\frac{t}{t^2+D^2}\eta\, \frac{D}{t^2+D^2}\xi\, ,\frac{D}{t^2+D^2}\eta$ belong to $\dom(D)$, and
\begin{align*}
& \bigg(\xi,\bigg(\frac{t}{t^2+D^2} \, [D,a] \,  \frac{t}{t^2+D^2} - \frac{D}{t^2+D^2} \, [D,a] \,  \frac{D}{t^2+D^2}\bigg)\eta\bigg) \\
& = \bigg(\frac{t}{t^2+D^2}\xi,[D,a] \,  \frac{t}{t^2+D^2}\eta\bigg)
- \bigg(\frac{D}{t^2+D^2}\xi,[D,a] \,  \frac{D}{t^2+D^2}\eta\bigg) \\
& = \bigg(\frac{D}{t^2+D^2}\xi, a\frac{t^2}{t^2+D^2}\eta\bigg)
- \bigg(\frac{D^2}{t^2+D^2}\xi,  a\frac{D}{t^2+D^2}\eta\bigg) \\
& - \bigg(\frac{t^2}{t^2+D^2}\xi, a\frac{D}{t^2+D^2}\eta\bigg)
+ \bigg(\frac{D}{t^2+D^2}\xi,  a\frac{D^2}{t^2+D^2}\eta\bigg) \\
& = \bigg(\frac{D}{t^2+D^2}\xi, a\eta\bigg)
- \bigg(\xi,  a\frac{D}{t^2+D^2}\eta\bigg) \\
& = \bigg(\xi ,\bigg( \frac{D}{t^2+D^2} \, a - a \,  \frac{D}{t^2+D^2} \bigg)\eta\bigg)\, .
\end{align*}
\end{proof}

Recall that a {\it symmetry} of a Hilbert space $\ch$ is a bounded operator $\gamma\in\cb(\ch)$  such that
\[
\gamma^* =\gamma\, ,\qquad \gamma^2 =I\, .
\]
\begin{dfn}(Even and odd Spectral Triples and Fredholm Modules)
\vskip0.1truecm\noindent
A Spectral Triple $(\ca, \ch, D)$ is called {\it even} if there exists a symmetry $\g$ such that
\[
D\gamma + \gamma D =0\, ,\qquad a\gamma - \gamma a =0\, \qquad a\in\ca\, .
\]
A Fredholm Module  $(F, \ch)$ is called {\it even} if there exists a symmetry $\g$ such that
\[
F\gamma + \gamma F =0\, ,\qquad a\gamma - \gamma a =0\, \qquad a\in\ca\, .
\]
\end{dfn}
In other words, the operator $D$ (resp. $F$) of an even Spectral Triple (resp. Fredholm module) acts as an antidiagonal matrix  with respect to the orthogonal decomposition of $\ch =\ch_+\oplus\ch_-$ in eigenspaces $\ch_\pm$ of the symmetry $\gamma$ corresponding to its eigenvalues $\pm 1$, while the elements of $\ca$ act diagonally.

\begin{cor}
Let $(\ca, \ch, D, \gamma)$ be an even Spectral Triple. Then, setting $F:= \sgn(D)$, $(F, \ch, \gamma)$ is an even Fredholm module over the C$^*$-algebra $\overline\ca$.
\end{cor}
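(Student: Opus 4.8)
The plan is to reduce everything to Proposition \ref{triple-module} and then add only the verification of the grading. Since $(\ca,\ch,D)$ is in particular a (possibly kernel-degenerate) Spectral Triple, Proposition \ref{triple-module} already guarantees that $(F,\ch)$ with $F=\sgn(D)$ is a (possibly kernel-degenerate) Fredholm module over $\overline\ca$; that is, conditions $(i)$--$(iii)$ of Definition \ref{FredholmModule} hold with no extra work. Hence the only remaining task is to show that the very same symmetry $\gamma$ implements the even structure on the Fredholm module, i.e. that $F\gamma+\gamma F=0$ and $a\gamma-\gamma a=0$ for $a\in\overline\ca$.

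The relation $a\gamma-\gamma a=0$ is immediate: it holds for $a\in\ca$ by hypothesis, and since $\gamma$ is bounded, left and right multiplication by $\gamma$ are norm-continuous, so it passes to the norm closure $\overline\ca$. The heart of the matter is thus the anticommutation $F\gamma+\gamma F=0$. Here I would exploit that $\gamma$ is a self-adjoint unitary, so that conjugation $X\mapsto \gamma X\gamma$ is a $*$-automorphism of $\cb(\ch)$ commuting with Borel functional calculus. From $D\gamma+\gamma D=0$ and $\gamma^2=I$ one gets $\gamma D\gamma=-D$, whence $\gamma\,\sgn(D)\,\gamma=\sgn(\gamma D\gamma)=\sgn(-D)$. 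Because $\sgn$ is an odd function, $\sgn(-D)=-\sgn(D)$, so $\gamma F\gamma=-F$, which rearranges, using $\gamma^2=I$ once more, to $F\gamma+\gamma F=0$.

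I do not expect any serious obstacle here: once Proposition \ref{triple-module} is in place, the corollary is a short functional-calculus computation. The only point that deserves a moment's care is the treatment of the kernel when $\ker(D)$ is infinite dimensional. One must make sure that the chosen branch of $\sgn$ at the origin is odd, so that the identity $\sgn(-D)=-\sgn(D)$ remains valid on all of $\ch$, including $\ker(D)$, where both sides vanish; with the convention $\sgn(0)=0$ this is automatic, and it is moreover the convention that makes $\ker(F)=\ker(D)$, consistent with the degenerate setting used in Proposition \ref{triple-module}. The evenness of the Fredholm module then follows.
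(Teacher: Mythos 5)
Your proof is correct and follows exactly the route the paper intends: the paper states this corollary without proof, as an immediate consequence of Proposition \ref{triple-module} together with the observation that $\gamma D\gamma=-D$ forces $\gamma\,\sgn(D)\,\gamma=\sgn(-D)=-\sgn(D)$ by functional calculus. Your treatment of the convention $\sgn(0)=0$ on the (possibly infinite-dimensional) kernel and of the passage of $[\gamma,a]=0$ to the norm closure $\overline\ca$ supplies precisely the details the paper leaves implicit.
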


Fredholm modules represent elements of the $K$-homology groups of the algebra $A$ \cite{Co}. These can be paired with elements of the $K$-theory groups of $A$. In particular, odd Fredholm modules couple with elements of the group $K_1 (A)$, whose elements are represented by invertible or unitary  elements of $A$.
Indeed,  assume $F$ to be selfadjoint.
In this case, for any invertible element $u\in A$, the operator $P_+\pi(u)P_+$ is Fredholm on $P_+\ch$, where $P_+$ is the projection on the positive part of the spectrum of $F$, and the pairing  is given by
$$
\langle F,u\rangle=\ind(P_+\pi(u)P_+).
$$
In the following, we allow $F$ to have a infinite dimensional kernel. The following Proposition justifies in some cases the treatment of such  kernel-degenerate Fredholm modules.

\begin{prop}\label{tamedegeneration}
Let $F$ be a self-adjoint operator whose spectrum is $\sigma (F):=\{-1,0,1\}$. Assume $[F,\pi(a)]$ to be compact for any $a\in A$, and denote by $P_ \l$ the spectral projection for the eigenvalue $ \l\in\{-1,0,1\}$. Then
\begin{itemize}
\item[$(i)$] when $u$ is invertible, $P_ \l\pi(u)P_ \l$ is Fredholm, for all $ \l\in\{-1,0,1\}$, and
\[
\sum_ \l\ind(P_ \l\pi(u)P_ \l) =0\, ,
\]
\item[$(ii)$] $[P_ \l,\pi(a)]$ is compact, for any $a\in A$, and $ \l\in\{-1,0,1\}$.
\end{itemize}
\end{prop}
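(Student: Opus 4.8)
The plan is to establish part $(ii)$ first, since it feeds directly into part $(i)$, even though it is stated second. The key remark is that, because $\sigma(F)=\{-1,0,1\}$, each spectral projection is a polynomial in $F$, obtained by Lagrange interpolation at the three eigenvalues:
\[
P_{\pm1}=\tfrac12\bigl(F^2\pm F\bigr),\qquad P_0=I-F^2 .
\]
(Only the inclusion $\sigma(F)\subseteq\{-1,0,1\}$ is actually used.) Since $[F,\pi(a)]$ is compact by hypothesis and $\ck(\ch)$ is a two-sided ideal, $[F^2,\pi(a)]=F[F,\pi(a)]+[F,\pi(a)]F$ is compact; hence each $[P_\eps,\pi(a)]$ is a linear combination of compact operators and therefore compact. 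This proves $(ii)$.

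For the Fredholm assertion in $(i)$, I would exhibit an explicit parametrix. Since $u$ is invertible, $\pi(u)$ is invertible in $\cb(\ch)$ with inverse $\pi(u^{-1})$, and I claim that $P_\eps\pi(u^{-1})P_\eps$ is a parametrix for $P_\eps\pi(u)P_\eps$ on $P_\eps\ch$. Working modulo $\ck(\ch)$ and using $(ii)$ to commute $P_\eps$ past $\pi(u^{\pm1})$, one obtains
\[
\bigl(P_\eps\pi(u)P_\eps\bigr)\bigl(P_\eps\pi(u^{-1})P_\eps\bigr)\equiv P_\eps\pi(u)\pi(u^{-1})P_\eps=P_\eps ,
\]
and symmetrically for the reversed product, where $\equiv$ denotes equality up to a compact operator and $P_\eps$ is the identity of $P_\eps\ch$. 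By Atkinson's theorem this makes $P_\eps\pi(u)P_\eps$ Fredholm on $P_\eps\ch$.

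It then remains to show $\sum_\eps\ind(P_\eps\pi(u)P_\eps)=0$. I would use $(ii)$ again: for $\eps\neq\eps'$ the off-diagonal block satisfies $P_\eps\pi(u)P_{\eps'}=P_\eps[\pi(u),P_{\eps'}]$ (since $P_\eps P_{\eps'}=0$) and is thus compact. Consequently $\pi(u)$ differs by a compact operator from the block-diagonal operator $T:=\bigoplus_\eps P_\eps\pi(u)P_\eps$ on $\ch=\bigoplus_\eps P_\eps\ch$. Since $\pi(u)$ is invertible it is Fredholm of index $0$; the index is invariant under compact perturbations, so $\ind T=0$; and the index is additive over orthogonal direct sums, so $\ind T=\sum_\eps\ind(P_\eps\pi(u)P_\eps)$. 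Combining these yields the claimed vanishing.

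The only genuinely delicate point here is bookkeeping rather than substance: one must take \emph{Fredholm} and \emph{index} consistently for operators restricted to the (possibly infinite-dimensional) subspaces $P_\eps\ch$, invoking the identifications $P_\eps\cb(\ch)P_\eps\cong\cb(P_\eps\ch)$ and $P_\eps\ck(\ch)P_\eps\cong\ck(P_\eps\ch)$ when applying Atkinson's theorem in the corner. Once that interpretation is fixed, every step is a routine consequence of the ideal property of $\ck(\ch)$ together with the stability and additivity of the Fredholm index.
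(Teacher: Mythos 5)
Your proof is correct, but it is organized differently from the paper's. The paper never writes the spectral projections as polynomials in $F$; instead its key tool is the compression identity $P_\eps[F,\pi(a)]P_{\eps'}=(\eps-\eps')\,P_\eps\pi(a)P_{\eps'}$ (immediate from $FP_\eps=\eps P_\eps$), which exhibits each off-diagonal block $P_\eps\pi(a)P_{\eps'}$, $\eps\ne\eps'$, as a scalar multiple of a compression of the compact operator $[F,\pi(a)]$. From this the paper gets $(i)$ directly: $\pi(u)$ equals its block-diagonal part $\sum_\eps P_\eps\pi(u)P_\eps$ plus a compact operator, so that part is Fredholm of index $0$; since a finite orthogonal direct sum is Fredholm precisely when each summand is, each corner $P_\eps\pi(u)P_\eps$ is Fredholm and the indices sum to $0$. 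Part $(ii)$ is then deduced \emph{from} $(i)$ by expanding $[P_\eps,\pi(a)]$ into off-diagonal blocks. You reverse this order: you prove $(ii)$ first via the Lagrange-interpolation formulas $P_{\pm1}=\tfrac12(F^2\pm F)$, $P_0=I-F^2$, obtain Fredholmness of each corner from the explicit parametrix $P_\eps\pi(u^{-1})P_\eps$ and Atkinson's theorem, and only then run the same compact-perturbation-plus-additivity index computation. The two compactness mechanisms are two faces of the same fact (the $P_\eps$ lie in the $C^*$-algebra generated by $F$), but your polynomial route makes $(ii)$ logically independent of $(i)$ and works verbatim whenever $\sigma(F)$ is finite, while the paper's identity is more economical, giving off-diagonal compactness in one line with no parametrix needed. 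Your bookkeeping remark about corners, $P_\eps\ck(\ch)P_\eps\cong\ck(P_\eps\ch)$, is worth making explicit; the paper glosses over it. Note that both arguments share the implicit assumption that $\pi$ is unital, so that $\pi(u)$ is invertible in $\cb(\ch)$ with inverse $\pi(u^{-1})$.
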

\begin{proof}
$(i)$ As $[F,\pi(u)]\in \ck(\ch)$, for all $ \l, \l'\in \{-1,0,1\}$, we have $P_ \l[F,\pi(u)]P_{ \l'}\in \ck(\ch)$,
\[
P_ \l[F,\pi(u)]P_{ \l'} = ( \l - \l')P_ \l\pi(u)P_{ \l'}
\]
and, in particular, $P_ \l[F,\pi(u)]P_{ \l}=0$.
Since
\[
\pi (u) = \sum_{ \l} P_ \l\pi(u)P_{ \l} + \sum_{ \l\ne \l'} ( \l - \l')^{-1}P_ \l[F,\pi(u)]P_{ \l'}
\]
and $u$ is invertible, then $\sum_{ \l} P_ \l\pi(u)P_{ \l}$ and $P_ \l\pi(u)P_{ \l}$ are Fredholm operators,  for all $ \l\in\{-1,0,1\}$, and
\[
\sum_{ \l}\ind(P_ \l\pi(u)P_ \l)=\ind(\pi(u))=0\, .
\]
\\
$(ii)$ Observe that, for all $a\in\ca$ and $\set{ \l,  \l',  \l''}$ a permutation of $\{-1,0,1\}$, we have
\begin{align*}
[P_ \l,\pi(a)]&=\sum_{\l,\m\in\s(F)}P_{\l}[P_ \l,\pi(a)]P_{\m}
=P_ \l\pi(a)P_{ \l'} + P_ \l\pi(a)P_{ \l''} - P_{ \l'}\pi(a)P_ \l - P_{ \l''}\pi(a)P_ \l.
\end{align*}
Since all summands in the last expression are compact by $(i)$, the thesis follows.
\end{proof}

\begin{cor} \label{TameDegeneracy}
Let $(\ch, \pi,  F)$ be a Fredholm module over a C$^*$-algebra $A$, in the sense of Definition \ref{FredholmModule}, with $F^*=F$, and $F^2=I$ on $(\ker F)^\perp$, and assume that for all invertible $u\in A$ we have
\begin{equation*}
\ind (P_0\pi (u)P_0)=0\, .
\end{equation*}
Then there exists a Fredholm module $(\ch, \pi,  F')$ such that $F'^2 = I$ and
\[
\ind (P_ \l\pi (u)P_ \l)=\ind (P'_ \l \pi (u)P'_ \l)\qquad  \l =-1\, ,+1\, .
\]
Here $P_ \l$ (resp. $P'_ \l$) denotes the projection of the operator $F$ (resp. $F'$) associated to the eigenvalue $ \l\in\{-1 , +1\}$.
\end{cor}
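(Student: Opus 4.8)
The plan is to absorb the kernel of $F$ into its positive part, so that the index contribution of the kernel—which vanishes by hypothesis—is transferred harmlessly. Write $\ch=\ch_+\oplus\ch_0\oplus\ch_-$ for the spectral decomposition of $F$ into the eigenspaces for $+1,0,-1$, with associated spectral projections $P_+,P_0,P_-$, so that $\ker F=\ch_0$; since $(\ch,\pi,F)$ is a Fredholm module with $F^*=F$ and $\sigma(F)=\{-1,0,+1\}$, the hypotheses of Proposition \ref{tamedegeneration} are in force. I would then set
\[
F':=F+P_0,
\]
which acts as $+1$ on $\ch_+\oplus\ch_0$ and as $-1$ on $\ch_-$. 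Hence $F'$ is self-adjoint and $F'^2=I$ on all of $\ch$, and its spectral projections are $P'_+=P_++P_0$ and $P'_-=P_-$.

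First I would verify that $(\ch,\pi,F')$ is again a Fredholm module in the sense of Definition \ref{FredholmModule}. Conditions $(i)$ and $(ii)$ are immediate, as $F'^2=I$ (so $\ker F'=0$) and $F'^*=F'$. For $(iii)$ I would write $[F',\pi(a)]=[F,\pi(a)]+[P_0,\pi(a)]$; the first term is compact because $(\ch,\pi,F)$ is a Fredholm module, and the second is compact by Proposition \ref{tamedegeneration}$(ii)$. Thus $[F',\pi(a)]\in\ck(\ch)$ for all $a\in A$, and $F'$ gives a genuine (non-degenerate) Fredholm module.

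Next I would compare the pairings. For $\eps=-1$ there is nothing to prove, since $P'_-=P_-$ gives $P'_-\pi(u)P'_-=P_-\pi(u)P_-$ verbatim. For $\eps=+1$ I would expand $P'_+\pi(u)P'_+$ in block form with respect to the decomposition $\ch_+\oplus\ch_0$:
\[
P'_+\pi(u)P'_+=\begin{pmatrix}P_+\pi(u)P_+&P_+\pi(u)P_0\\P_0\pi(u)P_+&P_0\pi(u)P_0\end{pmatrix}.
\]
By Proposition \ref{tamedegeneration}$(i)$ one has $P_\eps[F,\pi(u)]P_{\eps'}=(\eps-\eps')P_\eps\pi(u)P_{\eps'}$, so the two off-diagonal corners, namely $P_+[F,\pi(u)]P_0$ and $P_0[F,\pi(u)]P_+$, are compact. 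Hence $P'_+\pi(u)P'_+$ is a compact perturbation of the block-diagonal operator $(P_+\pi(u)P_+)\oplus(P_0\pi(u)P_0)$, both summands being Fredholm by Proposition \ref{tamedegeneration}$(i)$. Using invariance of the index under compact perturbations together with its additivity on direct sums,
\[
\ind(P'_+\pi(u)P'_+)=\ind(P_+\pi(u)P_+)+\ind(P_0\pi(u)P_0)=\ind(P_+\pi(u)P_+),
\]
the last equality being exactly the standing hypothesis $\ind(P_0\pi(u)P_0)=0$.

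The argument is essentially formal once Proposition \ref{tamedegeneration} is available; the only genuine input is the vanishing of $\ind(P_0\pi(u)P_0)$, which is precisely what guarantees that redistributing the kernel does not alter the pairing. The step to treat with care is the block decomposition: I would make sure that the off-diagonal corners are truly compact, so that $P'_+\pi(u)P'_+$ is genuinely a compact perturbation of the diagonal sum, and that both diagonal corners are genuinely Fredholm—both supplied by Proposition \ref{tamedegeneration}. I note as a consistency check that one could equally set $F'':=F-P_0$, absorbing $\ch_0$ into the negative part; the same hypothesis then yields identical pairings, confirming that the construction is insensitive to this choice.
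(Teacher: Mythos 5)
Your proof is correct and takes essentially the same route as the paper: both define $F':=F+P_0$, invoke Proposition \ref{tamedegeneration}~$(ii)$ for the compactness of $[P_0,\pi(a)]$, and obtain the index identity from the compactness of the corners $P_+\pi(u)P_0$, $P_0\pi(u)P_+$ supplied by the proof of Proposition \ref{tamedegeneration}~$(i)$, together with the hypothesis $\ind(P_0\pi(u)P_0)=0$. Your write-up merely spells out the block decomposition and compact-perturbation step that the paper leaves implicit.
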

\begin{proof}
Defining $F' := F+P_0$ we have $F'^* =F'$, $F'^2 =I$ and $\sigma (F')=\{-1 ,+1\}$. Since $[F',\pi(a)]=[F,\pi(a)]+[P_0,\pi(a)]$,  by Proposition \ref{tamedegeneration} $(ii)$ we have $[F',\pi(u)]\in \ck(\ch)$, so that $(\ca, (\pi,\ch), F')$ is a Fredholm module. Finally, since $P'_1 =P_1  + P_0$, and $P_1\pi(u)P_0), P_0\pi(u)P_1$ are compact, by the proof of Proposition \ref{tamedegeneration} $(i)$, and since, by assumption, $\ind(P_0\pi(u)P_0)=0$, we have
\[
\ind(P'_1\pi(u)P'_1)=\ind((P_1 + P_0)\pi(u)(P_1 + P_0))=\ind(P_1\pi(u)P_1)+\ind(P_0\pi(u)P_0)=\ind(P_1\pi(u)P_1)\, .
\]
\end{proof}

\begin{dfn} \label{dfn:kernelDeg}
A (possibly kernel-degenerate) odd Fredholm module $(\ch,\pi,F)$ will be called tamely degenerate if
\begin{equation}\label{tameF}
\ind (P_0\pi (u)P_0)=0\, ,
\end{equation}
for all invertible $u\in Mat_k(A)$, $k\in\bn$, where $P_0$ denotes the projection onto $\ker F\otimes I_{\bc^k}$.
\end{dfn}
Corollary \ref{TameDegeneracy} proves that a tamely degenerate Fredholm module is equivalent to a (non-kernel-degenerate) Fredholm module, as far as their indexes are concerned.

\medskip

We now recall the definition of $p$-graded Fredholm module.

\begin{dfn} \label{dfn:pgrad} [\cite{HiRoBook}, Defs 8.1.11 \& A.3.1]  Let $p \in \{-1,0, ,...\}$. A $p$-graded Fredholm module over a C$^*$-algebra
 $A$ is given by the following data:
\begin{itemize}
\item[(a)] a separable  Hilbert space $\ch$;
\item[(b)]    $p+1$ unitary operators $\eps_0,... , \eps_p$
such that $\eps_i\eps_j + \eps_j \eps_i = 0$ if $i\ne j$, $ \eps_i^2 = -1$, for $i\ne0$, $ \eps_0^2 = 1$.
\item[(c)] a representation $\pi: A \to \cb(\ch)$ such that
$[\eps_i,\pi(a)]=0$ for any $i=0,\dots,p$, any $a\in\ca$
\item[(d)] an operator $F$ on $\ch$ such that $\eps_i F-F\eps_i=0$,  $i\ne0$,  $\eps_0 F+F\eps_0=0$, and, for all $a \in \ca$,
$(F^2-1)$, $F-F^*$, $[F,\pi(a)]$ are compact.
\end{itemize}
In particular, odd Fredholm modules are $(-1)$-graded, and  even Fredholm modules are $(0)$-graded.
\end{dfn}

Endowed with the equivalence relation given by stable homotopy \cite{HiRoBook}, the set of equivalence classes of $p$-graded Fredholm modules becomes an abelian group, with addition given by direct sum, which is denoted $K^{-p}(A) = KK^{-p}(A,\bc)$, and called $(-p)$-th K-homology group of $A$. Because of Bott periodicity (cf. Proposition 8.2.13 in \cite{HiRoBook}), $K^{-p}(A)$ and $K^{-p-2}(A)$ are naturally isomorphic, so there are really two K-homology groups of $A$, the odd one $K^1(A)$, and the even one $K^0(A)$.
It turns out that (equivalence classes of) $p$-graded Fredholm modules pair with odd $K$-theory when $p$ is odd, and with even $K$-theory, when $p$ is even.

A particular instance of Bott periodicity, which we will need in the following sections, says that, given a 1-graded Fredholm module $\cf=(\ch,\pi,F,\g,\eps)$, and setting $\ch=\ch^+\oplus\ch^-$, $\pi=\pi^+\oplus\pi^-$, $F = \begin{pmatrix} 0 & F_{12} \\ F_{21} & 0\end{pmatrix}$, $\g = \begin{pmatrix} I_{\ch^+} &0\\ 0 & -I_{\ch^-} \end{pmatrix}$, $\eps = \begin{pmatrix} 0 & -iV \\ -iV^* & 0\end{pmatrix}$, $F^+ = V F_{21} = F_{12}V^*$, then $\cf^*=(\ch^+,\pi^+,F^+)$ is an odd Fredholm module on $A$, giving the same pairing with K-theory. Proposition \ref{Ex8.8.4} shows that weakening some of the conditions in the definition of $1$-graded module does not alter the previous result.

Let us observe that, given an even Fredholm module $(\pi,\ch,F,\g)$ on $\ca$, we can make it a 1-graded Fredholm module $(\pi,\ch,F,\g,\eps)$ simply by adding a skew-adjoint unitary operator $\eps$ which commutes with $F$, anticommutes with $\g$, and commutes with $\pi(a)$ (possibly up to compact operators).

\begin{dfn}($1$-graded Fredholm Module) \label{FredholmModule1graded}
A (possibly kernel-degenerate) $1$-graded Fredholm Module $(\ch,\pi,F,\g,\eps)$ over a C$^*$-algebra $A$,  consists of an even (possibly kernel-degenerate) Fredholm Module  $(\ch,\pi,F,\g)$, and an operator $\eps\in\cu(\ch)$ such that
\begin{itemize}
\item[$(i)$] $\eps^2 +I = 0$ on ${\rm ker}\, (F)^\perp$,
\item[$(ii)$] $\eps^* +\eps  =0$,
\item[$(iii)$] the commutators $[\eps,\pi(a)]$ are compact operators, for all $a\in A$.
\end{itemize}
\end{dfn}

\begin{prop}\label{Ex8.8.4}
	Let $(\ch,\pi,F,\g,\eps)$ be a (possibly kernel-degenerate) $1$-graded  Fredholm module, with $F$ self-adjoint.
	Then $\eps_0 = P^+-P^-$, where $P^\pm\in\Proj(\ch)$, $P^++P^-=I$. Setting $\ch^\pm:= P^\pm\ch$, one gets $\pi = \pi^+\oplus \pi^-$, $\eps = \begin{pmatrix} 0 & -iV \\ -iV^* & 0 \end{pmatrix}$, where $V:\ch^-\to\ch^+$ is a partial isometry, and $F = \begin{pmatrix} 0 & F_{12} \\ F_{21} & 0 \end{pmatrix}$. Setting $F^+ = VF_{21} = F_{12}V^*$, $F^- = V^*F_{12} = F_{21}V$, we have that the spectrum of $F^\pm$ is $\{-1,0,1 \}$, and let $P^\pm$, $N^\pm$, $Z^\pm$ be the spectral projections on the positive, negative, and zero eigenvalue of $F^\pm$.

Moreover, $(\ch^+,\pi^+, F^+)$, $(\ch^-,\pi^-, F^-)$ are  (possibly kernel-degenerate) odd  Fredholm modules, and, for all invertible $u\in Mat_k(A)$, it holds (with $P^\pm$ denoting $P^\pm\otimes I_{\bc^k}$, and analogously for $N^\pm$, $Z^\pm$, and $\pi^\pm$ properly extended to $Mat_k(A)$)
\begin{align*}
	\ind(P^+\pi^+(u)P^+) & = \ind(P^-\pi^-(u)P^-), \\
	\ind(N^+\pi^+(u)N^+) & = \ind(N^-\pi^-(u)N^-), \\
	\ind(Z^+\pi^+(u)Z^+) & = \ind(Z^-\pi^-(u)Z^-).
\end{align*}
Finally, if any of the modules $(\ch,\pi,F,\g,\eps)$,  $(\ch^+,\pi^+, F^+)$, $(\ch^-,\pi^-, F^-)$ is tamely degenerate, then so are the other two. %$\Ind(P^+\pi^+(u)P^+) = \Ind(P^+\pi^+(u)P^+)$
\end{prop}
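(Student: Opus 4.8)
The plan is to use the grading $\eps_0=\g$ to put everything into $2\times2$ blocks, read off the shapes of $F^\pm$ and $V$ from the (anti)commutation relations, transport one odd module onto the other by the partial isometry $V$ to get the $P$- and $N$-index identities, and route the delicate zero-eigenvalue identity through the index sum rule of Proposition \ref{tamedegeneration}.

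First I would diagonalize the symmetry $\eps_0=\g$, writing $\g=\Pi^+-\Pi^-$ with $\Pi^\pm$ the spectral projections onto its $\pm1$ eigenspaces (these are the $P^\pm$ of the statement) and $\ch^\pm=\Pi^\pm\ch$. The relations $[\g,\pi(a)]=0$, $F\g+\g F=0$, $\eps\g+\g\eps=0$ force $\pi=\pi^+\oplus\pi^-$ block diagonal and $F,\eps$ block antidiagonal; with $F^*=F$ and $\eps^*=-\eps$ this gives $F=\begin{pmatrix}0&F_{12}\\F_{21}&0\end{pmatrix}$ with $F_{21}=F_{12}^*$, and $\eps=\begin{pmatrix}0&-iV\\-iV^*&0\end{pmatrix}$ with $V:\ch^-\to\ch^+$. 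The condition $\eps^2=-I$ on $(\ker F)^\perp$ reads $VV^*=I$ on $(\ker F)^\perp\cap\ch^+$ and $V^*V=I$ on $(\ker F)^\perp\cap\ch^-$, so $V$ is a partial isometry whose initial and final spaces contain these subspaces; moreover $\ker F$ is $\g$-invariant (since $F$ anticommutes with $\g$) and $\eps$-invariant (since $\eps$ commutes with $F$), so $V$ maps $\ker F\cap\ch^-$ into $\ker F\cap\ch^+$. The commutation $F\eps=\eps F$ yields $F_{12}V^*=VF_{21}$ and $F_{21}V=V^*F_{12}$, so $F^+:=VF_{21}=F_{12}V^*$ and $F^-:=V^*F_{12}=F_{21}V$ are well defined and self-adjoint. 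Using $\mathrm{ran}(F_{12})\subseteq(\ker F)^\perp\cap\ch^+$ (as $\mathrm{ran}F\perp\ker F$ and $\ker F$ is $\g$-invariant) one checks $\ker F^\pm=\ker F\cap\ch^\pm$ and, from $F^2=I$ on $(\ker F)^\perp$, that $(F^\pm)^2=I$ on $(\ker F^\pm)^\perp$, whence $\sigma(F^\pm)\subseteq\{-1,0,1\}$. (If only $F^2-I$ is assumed compact, one first replaces $F$ by its sign, a compact perturbation preserving all the commutation relations and the index pairings.)

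To see that $(\ch^\pm,\pi^\pm,F^\pm)$ are odd Fredholm modules it remains to check compactness of $[F^\pm,\pi^\pm(a)]$: substituting the two approximate intertwinings $F_{12}\pi^-(a)\equiv\pi^+(a)F_{12}$ (from $[F,\pi(a)]\in\ck(\ch)$) and $V^*\pi^+(a)\equiv\pi^-(a)V^*$ (from $[\eps,\pi(a)]\in\ck(\ch)$) into $F^+=F_{12}V^*$ gives $[F^+,\pi^+(a)]\equiv0$, and symmetrically for $F^-$. For the index identities the key point is that $VF^-=F^+V$ holds on all of $\ch^-$ — on $(\ker F)^\perp\cap\ch^-$ by the intertwining relations, and on $\ker F\cap\ch^-$ because $F^-$ vanishes there while $F^+$ vanishes on its image $V(\ker F\cap\ch^-)\subseteq\ker F\cap\ch^+$. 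Hence $Vf(F^-)=f(F^+)V$ for every bounded $f$; in particular $VP^-=P^+V$ and $VN^-=N^+V$, with $V$ restricting to a unitary $\mathrm{ran}(P^-)\to\mathrm{ran}(P^+)$ and $\mathrm{ran}(N^-)\to\mathrm{ran}(N^+)$. Then for invertible $u\in Mat_k(A)$,
\[
V^*\big(P^+\pi^+(u)P^+\big)V=P^-\,V^*\pi^+(u)V\,P^-\equiv P^-\pi^-(u)P^-\pmod{\ck(\ch)},
\]
using $V^*\pi^+(u)V\equiv V^*V\,\pi^-(u)$ and $P^-V^*V=P^-$; conjugation by the unitary $V|_{\mathrm{ran}(P^-)}$ together with invariance of the index under compact perturbation yields $\ind(P^+\pi^+(u)P^+)=\ind(P^-\pi^-(u)P^-)$, and identically for $N^\pm$ (applying Proposition \ref{tamedegeneration}$(i)$ to $F^\pm$ also guarantees Fredholmness of all these compressions).

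The zero-eigenvalue identity is the delicate point, since $V$ need not carry $Z^-=\ker F\cap\ch^-$ isometrically onto $Z^+=\ker F\cap\ch^+$. I would obtain it from the sum rule instead: Proposition \ref{tamedegeneration}$(i)$ applied to $F^+$ and to $F^-$ gives $\ind(P^\pm\pi^\pm(u)P^\pm)+\ind(N^\pm\pi^\pm(u)N^\pm)+\ind(Z^\pm\pi^\pm(u)Z^\pm)=0$, and subtracting the two relations, in view of the $P$- and $N$-equalities just proved, forces $\ind(Z^+\pi^+(u)Z^+)=\ind(Z^-\pi^-(u)Z^-)$. Finally, since $\ker F=Z^+\oplus Z^-$ and $\pi(u)$ is block diagonal, $\ind(P_0\pi(u)P_0)=\ind(Z^+\pi^+(u)Z^+)+\ind(Z^-\pi^-(u)Z^-)=2\,\ind(Z^+\pi^+(u)Z^+)$ by the $Z$-identity; hence the conditions $\ind(P_0\pi(u)P_0)=0$, $\ind(Z^+\pi^+(u)Z^+)=0$, $\ind(Z^-\pi^-(u)Z^-)=0$ are simultaneously satisfied or not, i.e. any one of the three modules is tamely degenerate iff the other two are. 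The main obstacle throughout is the bookkeeping of the infinite-dimensional kernel: ensuring $V$ intertwines $F^\pm$ even on $\ker F$ so that the spectral projections transport correctly, and handling the $Z$-part by the sum rule rather than through $V$.
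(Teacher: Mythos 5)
Your proposal is correct and follows essentially the same route as the paper's proof: block-decomposing via the grading $\g$, reading off the antidiagonal forms of $F$ and $\eps$, transporting the $P^\pm$- and $N^\pm$-compressions through $V$ modulo compacts, obtaining the $Z^\pm$-identity from the index sum rule of Proposition \ref{tamedegeneration}$(i)$, and splitting $\ker F = Z^+\oplus Z^-$ for the tameness equivalence. The only differences are cosmetic: you handle the behaviour of $V$ on $\ker F$ (and the possible non-exactness of $F^2=I$ on $\ker(F)^\perp$) a bit more carefully than the paper, which simply works with the exact relations $-\eps^2=I-P_0=F^2$.
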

\begin{proof}
In the course of this proof we set $A\approx B$ to mean equality modulo compact operators, $i.e.$ $A-B$ is a compact operator.
From the properties of $\g$, it follows that $\ch=\ch^+\oplus\ch^-$, where $\ch^\pm$ is the eigenspace relative to the eigenvalue $\pm1$ of $\g$. Moreover $\pi = \pi^+\oplus \pi^-$, where $\pi^\pm  A \to \cb(\ch^\pm)$ is a representation of $A$. From $\g\eps+\eps\g=0$ and $\eps+\eps^*=0$ it follows $\eps = \begin{pmatrix} 0 & iV \\ iV^* & 0 \end{pmatrix}$, where $V: \ch^-\to\ch^+$.
 In addition, for $a\in A$, we have $0 \approx \eps\pi(a)-\pi(a)\eps = i \begin{pmatrix} 0& V\pi^-(a) - \pi^+(a)V \\ V^*\pi^+(a)-\pi^-(a)V^* &0 \end{pmatrix}  \implies V\pi^-(a) \approx \pi^+(a)V$.

As for $F$, $0 = F\g+\g F \implies F = \begin{pmatrix} 0 & F_{12} \\ F_{21} & 0 \end{pmatrix}$, whereas $0 = F\eps-\eps F  \implies F_{12}V^* = VF_{21}$, $V^*F_{12} = F_{21}V$. Moreover, denoting by $P_0$ the projection onto $\ker F$, we have $\begin{pmatrix} VV^* & 0 \\ 0 & V^*V \end{pmatrix} = -\eps^2 = I-P_0 = F^2 = \begin{pmatrix} F_{12}F_{21} & 0 \\ 0 & F_{21}F_{12} \end{pmatrix}$, so that $VV^* = F_{12}F_{21}$, and $V^*V = F_{21}F_{12}$ are projections, so that $V$ is a partial isometry.

Let us set $F^+=VF_{21}$, $F^-=V^*F_{12}$, so that $(F^+)^* = F_{21}^*V^* = F_{12}V^* = F^+$, $\begin{pmatrix} F^+ & 0 \\ 0 & F^- \end{pmatrix} = -i\eps F \implies \begin{pmatrix} (F^+)^2 & 0 \\ 0 & (F^-)^2 \end{pmatrix} = -\eps F \eps F = -\eps^2 F^2 = F^2 = I-P_0$, which implies that the spectrum of $F^\pm$ is $\{-1,0,1 \}$, and let $P^\pm$, $N^\pm$, $Z^\pm$ be the spectral projections on the positive, negative, and zero eigenvalue of $F^\pm$. Therefore, $F^\pm = P^\pm-N^\pm$, and $P^++N^+ = I-Z^+ = VV^*$, $P^-+N^- = I-Z^- = V^* V$. Moreover, $F^+V=VF_{21}V = VF^-$, so that $(P^+-N^+)V = V(P^--N^-)$. Besides, $(P^++N^+)V = VV^*V = V(P^-+N^-)$, from which we conclude $P^+V=VP^-$, and $N^+V=VN^-$.

In order to conclude that $(\ch^\pm,\pi^\pm, F^\pm)$ are odd Fredholm modules, we only need to prove  the properties of $F^\pm$.
For all $a\in A$, we have
\[
	I-P_0 \approx  (F^2-I)\pi(a)   = \begin{pmatrix} \big( (F^+)^2 -I \big) \pi^+(a) & 0 \\ 0  &\big( (F^-)^2-I \big)\pi^-(a) \end{pmatrix}
	\]
$\implies  \big( (F^\pm)^2 -I \big) \pi^\pm(a)  \approx I-Z^\pm$.
$0  \approx  F\pi(a)-\pi(a)F  \implies$
\begin{align*}
 0  \approx - i\eps F\pi(a)+ i\eps \pi(a)F &\approx -i\eps F\pi(a)+ \pi(a) i\eps F \\
&=   \begin{pmatrix}  F^+\pi^+(a)-\pi^+(a)F^+ & 0 \\ 0 & F^-\pi^-(a)-\pi^-(a)F^- \end{pmatrix}
\end{align*}
$\implies  F^\pm\pi^\pm(a)-\pi^\pm(a)F^\pm \approx 0$.
%che implica $F_{21}\pi^+(a)-\pi^-(a)F_{21} \in\ck(\ch^-,\ch^+)$, $F_{12}\pi^-(a)-\pi^+(a)F_{12} \in\ck(\ch^+,\ch^-)$, da cui segue $F'\pi^+(a)-\pi^+(a)F' = iV^*F_{21}\pi^+(a)-\pi^+(a)iV^*F_{21} =  iV^*F_{21}\pi^+(a)-iV^*\pi^-(a)F_{21} = iV^*(F_{21}\pi^+(a)-\pi^-(a)F_{21}) \in\ck(\ch^+)$.
%
%Inoltre si ha, per ogni $a\in A$,
$0 \approx  (F^*-F)\pi(a) \implies$
\begin{align*}
	 0 \approx& -i\eps (F^*-F)\pi(a) = -(iF^*\eps-i\eps F)\pi(a) =  (iF^*\eps^*+i\eps F)\pi(a) \\
= & \big( (-i\eps F)^*-(-i\eps F)\big)\pi(a)
	 = \begin{pmatrix}  \big(  (F^+)^*-F^+ \big)\pi^+(a) & 0 \\ 0 & \big(  (F^-)^*-F^- \big)\pi^-(a) \end{pmatrix}
\end{align*}
$\implies  \big(  (F^\pm)^*-F^\pm \big)\pi^\pm(a) \approx 0$,
that is, $(\ch^\pm,\pi^\pm, F^\pm)$ are (possibly kernel-degenerate) odd Fredholm modules.
%che implica $\big( (F_{21})^*-F_{12}\big)\pi^-(a) \in\ck(\ch^-,\ch^+)$, $\big((F_{12})^*-F_{21}\big)\pi^+(a) \in\ck(\ch^+,\ch^-)$, da cui segue $(F'^*-F')\pi^+(a) = -i(F_{21})^*V\pi^+(a)-iV^*F_{21}\pi^+(a) =  iV^*(F_{12})^*\pi^+(a)-iV^*F_{21}\pi^+(a) = iV^* \big( (F_{12})^*-F_{21} \big)\pi^+(a) \in\ck(\ch^+)$.
%
%Infine si ha, per ogni $a\in A$,
%\begin{align*}
%	\ck(\ch)\ni (F^2-I)\pi(a) & = \begin{pmatrix} 0&F_{12}\\ F_{21}&0 \end{pmatrix} \begin{pmatrix} 0&F_{12}\\ F_{21}&0 \end{pmatrix}  \begin{pmatrix} \pi^+(a) & 0 \\ 0& \pi^-(a) \end{pmatrix} - \begin{pmatrix} \pi^+(a) & 0 \\ 0& \pi^-(a) \end{pmatrix} \\
%	& = \begin{pmatrix}  \big( F_{12}F_{21}-I \big)\pi^+(a) & 0 \\ 0 & \big( F_{21}F_{12}-I\big)\pi^-(a)  \end{pmatrix}
%\end{align*}
%che implica $\big( F_{12}F_{21}-I \big)\pi^+(a) \in\ck(\ch^+)$, $\big( F_{21}F_{12}-I\big)\pi^-(a) \in\ck(\ch^-)$, da cui segue $(F'^2-I)\pi^+(a) = (-V^*F_{21}V^*F_{21}-I)\pi^+(a) =  ( V^*VF_{12}F_{21}-I)\pi^+(a) = (F_{12}F_{21}-I)\pi^+(a) \in\ck(\ch^+)$.
Finally, for all invertible $u\in A$, we have
\begin{align*}
	\ind(P^+\pi^+(u)P^+) & = \ind(P^+VV^*\pi^+(u)VV^*P^+) = \ind(VP^-V^*V\pi^-(u)P^-V^*)  \\ & = \ind(VP^-\pi^-(u)P^-V^*)  = \ind(P^-\pi^-(u)P^-),
\end{align*}
where the second equality follows from the intertwining properties of $V$, and $V\pi^-(a) \approx \pi^+(a)V$, and the last equality follows from the fact that $V\in\cu(P^-\ch,P^+\ch)$. The equality $\ind(N^+\pi^+(u)N^+)  = \ind(N^-\pi^-(u)N^-)$ is proved analogously, whereas $\ind(Z^+\pi^+(u)Z^+)  = \ind(Z^-\pi^-(u)Z^-)$ follows from the above and Proposition \ref{tamedegeneration} $(i)$. An analogous argument proves the above equalities for any invertible $u\in Mat_k(A)$.

Therefore, if $(\ch,\pi,F,\g,\eps)$ is tamely degenerate, $0 = \ind(P_0\pi(u)P_0) = \ind(Z^+\pi^+(u)Z^+)  + \ind(Z^-\pi^-(u)Z^-)$, which implies $\ind(Z^+\pi^+(u)Z^+)  = \ind(Z^-\pi^-(u)Z^-) = 0$, that is $(\ch^\pm,\pi^\pm,F^\pm)$ are tamely degenerate. Viceversa,  $(\ch^+,\pi^+,F^+)$ is tamely degenerate $\iff (\ch^-,\pi^-,F^-)$ is, and in this case $(\ch,\pi,F,\g,\eps)$ is tamely degenerate as well.
\end{proof}

\subsection{Spectral triples on self-similar fractals}\label{triplesonfractals}
A self-similar fractal (in $\br^n$) is described by a finite set of similitudes $w_1,\dots w_k$, with scaling parameters $\l_1,\dots \l_k$, $\l_i<1$, as the unique compact set $X$ such that
$$
\bigcup_{i=1}^k w_i(X)=X.
$$
A standard way to construct spectral triples on such fractal is the following:
\begin{itemize}
\item Select a subset $S\subset X$ together with a triple $\ct=(\pi,\ch,D)$ on $\cc(S)$.
\item Set  $\ct_\emptyset=(\pi_\emptyset,\ch_\emptyset,D_\emptyset)$ on $\cc(X)$, where $\pi_\emptyset(f)=\pi(f|_S)$, $\ch_\emptyset =\ch$, $D_\emptyset=D$.
\item Set  $\ct_\sigma:=(\pi_\sigma,\ch_\emptyset,D_\sigma)$ on $\cc(X)$, with
$\pi_\sigma(f)=\pi_\emptyset(f\circ w_\sigma)$, $D_\sigma=\lambda_\sigma^{-1}D_\emptyset$, $\lambda_\sigma=\prod_{i=1}^{|\sigma|}\l_{\sigma_i}$.
\item Set $\ct=\bigoplus_\sigma\ct_\sigma$ on $\cc(X)$ and consider the $^*$-algebra  $\ca=\{f\in\cc(X):[D,f]$ is bdd\}.
\end{itemize}
This type of construction was used in  \cite{GuIs9,GuIs10} to reproduce some of the features of the fractals. It was also the basis of the construction of the triples for the Sierpinski gasket $K$ in \cite{CIL,CIS}, by  choosing $S$ as the lacuna $\ell_\emptyset$ isometrically identified with the circle $\bt$, with the standard triple given by $\ch=L^2(\bt)$ and $D=-id/d\th$.

As shown below (Lemma \ref{traceClass}), this choice does not allow the recovery of the energy via residues. In order to do so, one has to deform the triple on the circle  by replacing the standard Laplacian $\D$ with one of its fractional powers $\D^\a$, $\a<1$. However, in order to obtain a Dirac operator $D$ based on a suitable ``differential'' square root of $\D^\a$, we need to double the Hilbert space.
So, the deformed triples on $\bt$, which we describe below, are obtained by deforming the triple $(\ca,\ck,D)$, where $\ca=Lip(\bt)$,
%=\{f\in\cc(\bt):\|[D,f]\|$ is bounded\},
$\ck:=L^2(\O^*(\T))=L^2(\O^1(\T))\oplus L^2(\O^0(\T))$, and
$D=\begin{pmatrix}
0&d\\d^*&0
\end{pmatrix}$.
However, such a triple on $\bt$ (and its associated Fredholm module) is even, with  standard grading
$\g=\begin{pmatrix}
-1&0\\0&1
\end{pmatrix}$,
so that, to maintain the correct pairing with odd K-theory, we add a further grading $\eps$,
\begin{equation}\label{epsilon}
\eps=-i\begin{pmatrix}
0&V\\V^*&0
\end{pmatrix},
\text{ where } Ve^{int}=\frac{\sgn(n)}{n}de^{int}=ie^{int}dt\,, t\in\bt.
\end{equation}
It is not difficult to show that $(\ca,\ck,D,\g,\eps)$ is a 1-graded spectral triple,  that is to say,  $(\cc(\bt),\ck,F,\g,\eps)$ is a 1-graded Fredholm module according to Definition \ref{dfn:pgrad} above, where $F$ is the phase of $D$.
Such 1-graded Fredholm module  is equivalent to the original odd one by Bott periodicity, cf. Proposition \ref{Ex8.8.4}. The triple $(\ca,\ck,D,\g,\eps)$ allows the required $\a$-deformation, which is described in the next Section.

\section{Spectral triples on quasi-circles}

In this section we build a family of spectral triples on the algebra $C(\T)$ of continuous functions on the circle $\mathbb{T}:=\br/\bz$, depending on a parameter $\a\in (0,1]$. For $\a=1$ we consider the  triple $(\ca,\ck,D,\g,\eps)$ given at the end of the preceding section, which describes the circle with the standard differential structure. The rest of the Section is devoted to the construction of the triples for $\a<1$, which may be considered as deformations of the original one, the circle being replaced by quasi-circles.

\subsection{Preliminaries about quadratic forms on $\T$}

We will use the following notation. For any $f\in C(\bt)$, the Fourier coefficients are $f_k := \frac1{2\pi}\int_\bt f(t)e^{-ikt}\,dt$, $k\in\bz$, the convolution between $f$ and $g\in C(\bt)$ is $f*g(t) := \frac1{2\pi} \int_\bt f(t-\th)g(\th)\, d\th$, and if $\Psi$ is a distribution on $\bt$ and $f\in C^\infty(\bt)$, the (sesquilinear) pairing $\langle \Psi, f \rangle$ is defined as the (weakly continuous) extension of the scalar product in $L^2(\ct)$.
For any positive sequence $\{a_k\}$ of polynomial growth on $\bz$ we consider the quadratic form on functions in $\cc^\infty(\T)$ given by
$$
Q[f]=\sum_{k\in\bz}a_k |f_k|^2,
$$
and the distribution $\Phi$  given by the Fourier series $\displaystyle\sum_{k\in\bz}a_ke^{ikt}$,  so that $\langle \Phi, f \rangle = \sum_{k\in\bz} a_kf_k$, and
$$
Q[f]=\langle \Phi, f^**f\rangle,
$$
where $f^*(t):=\overline{f}(-t)$.
%\end{lem}

\begin{dfn}
A sequence $\{a_k \in \bc: k\in \bz\}$ is called {\it positive definite} if
\begin{equation}\label{posdef}
\sum_{m,n\in\bz}a_{m-n}\overline{c}_mc_n\geq0
\end{equation}
 for any finitely supported sequence $\{c_k\}$.
 %Equivalently, its Fourier series $\Phi$ is a positive distribution.
A sequence $\{a_k\}$ is called {\it conditionally positive definite} if
\begin{equation}\label{condposdef}
\sum_{m,n\in\bz}a_{m-n}(\partial\overline{c})_m (\partial c)_n \geq0
\end{equation}
 for any finitely supported sequence $\{c_k \in \bc: k\in \bz\}$, where $(\partial c)_k=c_k-c_{k-1}$. A sequence is {\it (conditionally) negative definite}  if it is the opposite of a (conditionally) positive definite one.
%Equivalently, the pairing of its Fourier series $\Phi$ with a $C^\infty$ function with a zero of order $s$ in $t=0$ is described by a positive distribution.
\end{dfn}

\begin{thm}\label{thm:condposdef}
Let $\{a_k\}$ be a conditionally positive definite sequence. Then there exist a positive measure $\mu$ on $\T$ and a constant $b$ such that
$$
\langle\Phi,f\rangle=\int_\T(f(t)-f(0)-f'(0)\sin t)\ d \mu+ a_0 f(0)+\frac1{2i} (a_1-a_{-1})f'(0)+bf''(0).
$$
\end{thm}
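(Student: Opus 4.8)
The plan is to reduce the statement to the Herglotz--Bochner theorem for positive definite sequences, by multiplying the distribution $\Phi$ by the factor $2-2\cos t=|1-e^{it}|^2$, which vanishes to second order at $t=0$ and therefore ``cures'' the conditional nature of the positivity. Concretely, I would first record the purely algebraic identity
\[
\sum_{m,n}a_{m-n}(\partial\bar c)_m(\partial c)_n=\sum_{m,n}b_{m-n}\bar c_m c_n,\qquad b_k:=2a_k-a_{k-1}-a_{k+1},
\]
obtained by expanding $(\partial\bar c)_m(\partial c)_n=(\bar c_m-\bar c_{m-1})(c_n-c_{n-1})$ into four terms and reindexing each sum. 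Comparing with \eqref{posdef} and \eqref{condposdef}, this shows at once that conditional positive definiteness of $\{a_k\}$ is \emph{equivalent} to ordinary positive definiteness of the second-difference sequence $\{b_k\}$.

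By the Herglotz--Bochner theorem, $\{b_k\}$ is then the sequence of Fourier coefficients of a finite positive measure $\rho$ on $\T$, a fact that may be rephrased as the distributional identity $(2-2\cos t)\,\Phi=\rho$. Unwinding the conventions, for every $g\in C^\infty(\T)$ one gets
\[
\langle\Phi,(2-2\cos t)\,g\rangle=\sum_{k}b_k\,g_k=\int_\T g\,d\rho,
\]
the first equality being the shift computation for multiplication by $2-e^{it}-e^{-it}$. I would then split off the atom of $\rho$ at the origin, $\rho=\rho_0+\rho(\{0\})\,\delta_0$ with $\rho_0(\{0\})=0$, and define the positive measure $\mu$ on $\T\setminus\{0\}$ by $d\mu:=d\rho_0/(2-2\cos t)$. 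Since $\int_\T(2-2\cos t)\,d\mu=\rho_0(\T)<\infty$, this $\mu$ is a genuine (L\'evy-type) positive measure against which the integrand below is summable.

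To produce the stated formula I would feed into the displayed identity the function $g:=(f-f(0)-f'(0)\sin t)/(2-2\cos t)$. The numerator $N(t):=f(t)-f(0)-f'(0)\sin t$ satisfies $N(0)=N'(0)=0$ and $N''(0)=f''(0)$, hence vanishes to second order at $t=0$, so $g$ extends to a smooth function on $\T$ with $g(0)=\tfrac12 f''(0)$. Substituting gives
\[
\langle\Phi,f\rangle-a_0f(0)-\tfrac1{2i}(a_1-a_{-1})f'(0)=\int_\T N\,d\mu+\tfrac12\,\rho(\{0\})\,f''(0),
\]
where I used $\langle\Phi,1\rangle=a_0$ and $\langle\Phi,\sin\rangle=\tfrac1{2i}(a_1-a_{-1})$ to identify the lower-order contributions on the left, and separated the atomic and non-atomic parts of $\rho$ on the right (the atom contributing $g(0)\,\rho(\{0\})$). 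Setting $b:=\tfrac12\rho(\{0\})\ge0$ yields exactly the claimed representation.

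The conceptual heart is the multiplication trick together with the identity relating \eqref{condposdef} to \eqref{posdef}; once $\rho$ is in hand, the rest is bookkeeping. The step requiring most care is the passage near $t=0$: one must verify that dividing $\rho$ by $2-2\cos t$ away from the origin produces a bona fide positive measure with the correct integrability, so that the integral converges for all smooth $f$, and that the isolated atom at $0$ is precisely what generates the $f''(0)$ term. Keeping the Fourier and pairing conventions consistent throughout, so that $\langle\Phi,(2-2\cos t)\,g\rangle$ collapses cleanly to $\int_\T g\,d\rho$, is the other place where sign errors could slip in.
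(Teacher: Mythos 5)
Your proof is correct, and its skeleton coincides with the paper's: both arguments multiply $\Phi$ by $2(1-\cos t)=|1-e^{-it}|^{2}$, recognize the product as a finite positive measure, split off its atom at $t=0$ (which is exactly what produces the $bf''(0)$ term), divide the non-atomic part by $2(1-\cos t)$ to obtain the L\'evy-type measure $\mu$, and finally pair $\Phi$ with $f-f(0)-f'(0)\sin t$, whose second-order zero at $t=0$ makes the resulting integral converge. Where you genuinely differ is in the step that converts positivity into a measure. The paper, following Gel'fand--Vilenkin, rewrites \eqref{condposdef} in Fourier terms as $\langle|1-e^{-it}|^{2}\Phi,|f|^{2}\rangle\geq0$, passes from squares $|f|^{2}$ to arbitrary nonnegative $g\in\cc^\infty(\T)$, and then invokes the fact that a positive distribution is a positive measure. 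You instead remain at the level of sequences: your summation-by-parts identity shows that conditional positive definiteness of $\{a_k\}$ is \emph{equivalent} to ordinary positive definiteness of the second differences $b_k=2a_k-a_{k-1}-a_{k+1}$, and Herglotz--Bochner then hands you the measure $\rho=(2-2\cos t)\Phi$ outright. Your route buys a cleaner positivity step: it sidesteps the approximation (standard, but left implicit in the paper) needed to pass from squares to arbitrary nonnegative smooth $g$, e.g.\ via $\sqrt{g+\eps}$, and it packages the ``positive distribution is a measure'' fact into one classical citation; the paper's distributional argument, on the other hand, is self-contained and is the form that generalizes to settings where no Herglotz-type theorem is available, which is presumably why the authors follow Gel'fand--Vilenkin. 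The bookkeeping is consistent between the two: your $\rho$ is twice the paper's $\nu=(1-\cos t)\Phi$, the two measures $\mu$ coincide, and your $b=\tfrac12\rho(\{0\})$ equals their $\nu(\{0\})$.
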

\begin{proof}
The proof is analogous to that of Thm 1, Chapter II of \cite{GeVi4}, but we give the details for the convenience of the reader.

Passing to Fourier series, eq.~\eqref{condposdef}, which clearly holds also for fast decreasing sequences $c_k$, may be rephrased as
\begin{equation}\label{condposdef2}
\langle |1-e^{-it}|^{2}\Phi,|f|^2\rangle\geq0,
\end{equation}
where $f(t)=\sum_{k\in\bz}c_ke^{ikt}$. Since such sums describe all $\cc^\infty$ functions, and $|1-e^{-it}|^{2}=2(1-\cos t)$, this is equivalent to $\langle (1-\cos t)\Phi,g\rangle\geq0$ for any positive function $g\in\cc^\infty$, namely $(1-\cos t)\Phi$ is a positive measure $\n$. Equivalently,
 $\langle \Phi,(1-\cos t) g\rangle=\int g\, d\n$ for any  $g\in\cc^\infty$. Since any function $h$ with a zero of order $2$ may be written as $h=(1-\cos t) g$, we get
 $ \langle \Phi,h\rangle=\int h(t)(1-\cos t)^{-1}\, d\n$ for any function $h$ with a zero of order $2$ in $t=0$.
 We then separate the part of $\n$ with support in 0, setting $\n=b\d_0+(1-\cos t)\m$, thus getting
 $$
 \langle \Phi,h\rangle=\int_{(0,2\pi)} h(t)\, d\m+bh^{''}(0)\,.
 $$
Then, since $f(t)-f(0)-f'(0)\sin t$ has a zero of order $2$ for $t=0$, we get
\begin{align*}\label{condposformula}
\langle\Phi,f\rangle
&=\int_\T(f(t)-f(0)-f'(0)\sin t)\ d \mu+\langle\Phi,f(0)+f'(0)\sin t\rangle+bf''(0)\\
&=\int_\T(f(t)-f(0)-f'(0)\sin t)\ d \mu+ a_0 f(0)+\frac1{2i} (a_1-a_{-1})f'(0)+bf''(0).
\end{align*}
\end{proof}

\subsection{Sobolev norms and Clausen functions}
Let $s\in\bc$. Then the polylogarithm function of order $s$ is defined as
$$
\Li_s(z):= \sum_{k\in\bn} \frac{z^k}{k^s},\quad |z|<1.
$$
It has an analytic continuation on the whole complex plane with the line $[1,+\infty)$ removed, cf. the Appendix.
The  Clausen cosine function $\Ci_s(t)$ is defined as the sum of the Fourier series
$$
\sum_{k\in\bn}\frac{\cos kt}{k^s},\quad \re s> 1.
$$
When $\re s\leq 1$ it can be defined as the real part of $\Li_s(e^{it})$, hence it is a smooth function for $t\neq0$.

Some properties of the Clausen function are contained in Lemma \ref{Clausen-estimate} and Proposition \ref{Prop:pairing}.

\begin{prop}
Let  $\a\in(0,1)$, $a_k=|k|^{2\a}$, $k\in\bz$, and $\Phi$ the associated distribution as above. Then
\begin{itemize}
\item[$(i)$] the sequence $a_k$ is conditionally negative definite,
\item[$(ii)$] for any $\cc^\infty$ function $f$,
$$
\langle\Phi,f\rangle=\frac1\pi \int_\T\Ci_{-2\a}(t)(f(t)-f(0))\ dt\,.
$$
In particular, the Clausen function $\Ci_{-2\a}$ is negative.
\end{itemize}
\end{prop}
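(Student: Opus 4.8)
The plan is to prove the integral representation $(ii)$ and the conditional negative definiteness $(i)$ by independent arguments, and then to read off the negativity of $\Ci_{-2\a}$ as a consequence of the two.

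For $(i)$, recall that as $c$ runs over the finitely supported sequences, $d=\partial c$ runs exactly over the finitely supported sequences with $\sum_k d_k=0$; so by the definition of conditional negative definiteness it suffices to show $\sum_{m,n}|m-n|^{2\a}\,\overline d_m d_n\le0$ for every such $d$. I would start from the subordination identity $|k|^{2\a}=c_\a\int_0^\infty\xi^{-1-2\a}(1-\cos k\xi)\,d\xi$, valid with a constant $c_\a>0$ precisely because $0<2\a<2$ forces convergence of the integral at both endpoints. For each fixed $\xi$ the kernel $k\mapsto 1-\cos k\xi$ is conditionally negative definite, since $\sum_{m,n}(1-\cos((m-n)\xi))\overline d_m d_n=-\big|\sum_n d_n e^{-in\xi}\big|^2\le0$ once $\sum_n d_n=0$; integrating against the positive measure $c_\a\,\xi^{-1-2\a}\,d\xi$ preserves the inequality, which gives $(i)$.

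For $(ii)$ the crux is the distributional identity $\Phi=2\,\Ci_{-2\a}$ away from the origin, together with the correct regularization at $t=0$. Both sides of the claimed formula are linear in $f$ and (I will check) continuous on $\cc^\infty(\T)$, so it suffices to verify them on the exponentials $f(t)=e^{int}$, on which the left side equals $a_n=|n|^{2\a}$ by definition. Since $\Ci_{-2\a}$ is even, the $\sin nt$ part of $e^{int}$ drops out and the right side becomes $\frac1\pi\int_{-\pi}^{\pi}\Ci_s(t)(\cos nt-1)\,dt$ at $s=-2\a$. I would treat this as a function $G_n(s)$: for $\re s>1$ the series $\Ci_s=\sum_{k\ge1}k^{-s}\cos kt$ converges uniformly and orthogonality yields $G_n(s)=n^{-s}$ immediately; moreover the integral defining $G_n$ converges and is holomorphic on $\re s>-2$, because near $0$ one has $|\Ci_s(t)|\lesssim|t|^{\re s-1}$ while $\cos nt-1=O(t^2)$. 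By analytic continuation $G_n(-2\a)=n^{2\a}=|n|^{2\a}$, matching the left side.

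To pass from exponentials to arbitrary $f\in\cc^\infty(\T)$ I must justify the continuity used above, which is the delicate point. The density $\Ci_{-2\a}$ behaves like $|t|^{-2\a-1}$ near $0$ and is not integrable once $\a\ge\frac12$, so the integral in $(ii)$ must be read as an improper (principal-value) integral; here the even symmetry of $\Ci_{-2\a}$ is essential, as it makes $\int\Ci_{-2\a}(t)\,t\,dt$ vanish, so the first-order Taylor term $f'(0)\,t$ (equivalently $f'(0)\sin t$) contributes nothing and only the $O(t^2)$ remainder survives, giving a bound $|\mathrm{RHS}|\le C\|f\|_{C^2}$. With both functionals continuous in the $C^2$-norm and agreeing on the total set of exponentials, they agree throughout $\cc^\infty(\T)$. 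Negativity of $\Ci_{-2\a}$ then follows \emph{in particular}: the Fourier form of $(i)$ gives $\langle\Phi,|1-e^{-it}|^2|h|^2\rangle\le0$ for $h=\sum_k c_ke^{ikt}$, while by $(ii)$ this equals $\frac1\pi\int\Ci_{-2\a}(t)\,|1-e^{-it}|^2|h(t)|^2\,dt$ (note the test function vanishes at $0$); since $|1-e^{-it}|^2|h|^2$ realizes any nonnegative smooth function supported away from $0$, we conclude $\Ci_{-2\a}\le0$. The main obstacle is exactly the analytic/regularization bookkeeping of $(ii)$: the continuation of $G_n(s)$ past $\re s=1$ and the cancellation of the non-integrable first-order singularity by evenness. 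Once these are secured, $(i)$ and the sign statement are routine.
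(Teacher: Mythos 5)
Your proposal is correct in substance, but it takes a genuinely different route from the paper's. For $(i)$ the paper simply cites \cite{BCR} (conditional negative definiteness is preserved under the fractional powers $x\mapsto x^\a$, $\a\in(0,1]$), whereas you give a self-contained subordination proof from $|k|^{2\a}=c_\a\int_0^\infty\xi^{-1-2\a}(1-\cos k\xi)\,d\xi$, which is more elementary and valid exactly because $0<2\a<2$. For $(ii)$ the paper does \emph{not} argue by density on exponentials: it feeds $(i)$ into its L\'evy--Khinchin-type structure theorem (Theorem \ref{thm:condposdef}), obtaining $\langle\Phi,f\rangle=\int_\T\big(f(t)-f(0)-f'(0)\sin t\big)\,d\mu+bf''(0)$ with $\mu$ automatically \emph{negative} (the terms $a_0f(0)$ and $(a_1-a_{-1})f'(0)$ vanish since $a_0=0$, $a_1=a_{-1}$); it then identifies $d\mu=\frac1\pi\Ci_{-2\a}(t)\,dt$ by pairing with even functions vanishing at $0$ (Proposition \ref{Prop:pairing}, an Abel-summation/contour computation that plays precisely the role of your analytic continuation of $G_n(s)$ in the order $s$), and finally kills the $bf''(0)$ term by a scaling argument with test functions $g(t/\eps)$. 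What your route buys: it bypasses the structure theorem entirely, and it makes explicit that for $\a\ge\frac12$ the integral in $(ii)$ is only a principal value whose non-integrable first-order part is cancelled by evenness --- a point the paper leaves implicit, since its $d\mu$-integral is absolutely convergent thanks to the $\sin t$ subtraction. What the paper's route buys: the negativity of $\Ci_{-2\a}$ comes for free from the sign of $\mu$, whereas you must recover it afterwards by testing against $2(1-\cos t)|h|^2$ and a Fej\'er--Riesz approximation of nonnegative functions supported away from $0$.

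Two points in your write-up need tightening, though neither is structural. First, your claim that \emph{both} functionals are $C^2$-continuous is not available a priori on the Fourier side: for $\a\ge\frac12$ the direct estimate $\sum_k|k|^{2\a}|f_k|\le C\|f\|_{C^2}$ fails ($C^2$-boundedness of $f\mapsto\sum_k|k|^{2\a}f_k$ is in fact a \emph{consequence} of $(ii)$, so assuming it would be circular). The fix is trivial: work with the $C^3$ norm, where $|f_k|\le C\|f\|_{C^3}|k|^{-3}$ gives absolute convergence, and note that partial Fourier sums of a smooth function converge in every $C^N$ norm, so the density argument is unaffected. Second, the holomorphy of $G_n(s)$ on $\re s>-2$ requires bounds on $\Ci_s(t)$ near $t=0$ that are locally uniform in $s$, including near $s=1$ where the singularity is logarithmic rather than of size $|t|^{\re s-1}$; this is exactly the kind of estimate the paper establishes in its Appendix (Lemma \ref{Clausen-estimate} and the uniform bound \eqref{unifEstimate}), so it is available, but it must be invoked rather than asserted.
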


\begin{proof}
$(i)$ It is well known that $k^2$ is a conditionally negative definite sequence, therefore so is $k^{2\a}$, for $\a\in(0,1]$ (\cite{BCR}, page 78).

$(ii)$  Assume $f(0)=0$. Since $\Phi$ is even, the pairing with the odd part of $f$ vanishes, while, by Proposition \ref{Prop:pairing}, the pairing with the even part is given by the integral against $\frac1\pi \Ci_{-2\a}$. According to  the results of  Theorem \ref{thm:condposdef}, the measure $d\m$ (which is now negative) should be replaced by $\frac1\pi \Ci_{-2\a}(t)\,dt$, showing in particular that $\Ci_{-2\a}$ is negative. For a general $f$, again using the parity of $\Phi$, the pairing becomes
$$
\langle\Phi,f\rangle=\frac1\pi \int_\T\Ci_{-2\a}(t)(f(t)-f(0))\ dt+b f^{''}(0)\,,
$$
hence we get the result if we show that $b=0$. By definition, for any continuous function $g$, $\langle\Phi, (1-\cos t)g(t)\rangle=b\, g(0)+\int (1-\cos t)g(t)\,d\m$. In particular, if $g$ has suitably small support,
$b\, g(0)=\lim_{\eps\to0}\langle\Phi, (1-\cos t)g(t/\eps )\rangle$. Choosing $g(t)=\chi_{[-1,1]}(1-|t|)$, a direct computation shows that $b=0$.
\end{proof}

\begin{cor}\label{ClausenSobolev}
Let  $\a\in(0,1]$, $a_k=|k|^{2\a}$, $k\in\bz$,  and denote by $\ce_\a$ the corresponding quadratic form. Then
\begin{itemize}
\item[$(i)$] $\|f\|^2_2+\ce_\a[f]$ is the square of the norm for the Sobolev space $H^\a(\T)$,
\item[$(ii)$] the quadratic form $\ce_\a$ is given by
$$
\ce_\a[f]=- \frac1{2\pi} \int_{\T\times\T}\Ci_{-2\a}(x-y)|f(x)-f(y)|^2\ dxdy-b\|f'\|^2_2,
$$
where $b=0$ when $\a<1$, while  $\Ci_{-2}=0$ and $b=-1$ when $\a=1$.
\end{itemize}
\end{cor}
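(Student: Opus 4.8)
The common starting point is the identity $\ce_\a[f]=\sum_{k\in\bz}|k|^{2\a}|f_k|^2$, coming from $\ce_\a[f]=Q[f]=\sum_k a_k|f_k|^2$ with $a_k=|k|^{2\a}$, in which the $k=0$ term is absent. For $(i)$ this gives $\|f\|_2^2+\ce_\a[f]=\sum_{k\in\bz}\bigl(1+|k|^{2\a}\bigr)|f_k|^2$, so it suffices to observe that the weights $1+|k|^{2\a}$ are comparable to the standard Sobolev weights $(1+k^2)^\a$. Indeed, for $\a\in(0,1]$ the subadditivity of $t\mapsto t^\a$ yields $(1+k^2)^\a\le 1+|k|^{2\a}$, while $(1+k^2)^\a\ge\max\{1,|k|^{2\a}\}\ge\tfrac12\bigl(1+|k|^{2\a}\bigr)$; hence the two expressions define equivalent norms and $(i)$ follows.

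For $(ii)$ with $\a<1$, I would compute $\ce_\a[f]=\langle\Phi,f^**f\rangle$ by inserting the representation of $\langle\Phi,\cdot\rangle$ from the previous Proposition, namely $\langle\Phi,g\rangle=\tfrac1\pi\int_\T\Ci_{-2\a}(t)\bigl(g(t)-g(0)\bigr)\,dt$, applied to $g=f^**f$. Since $g(t)-g(0)=\tfrac1{2\pi}\int_\T\bigl[\overline{f(\theta-t)}-\overline{f(\theta)}\bigr]f(\theta)\,d\theta$, the change of variables $(\theta,t)\mapsto(x,y)=(\theta,\theta-t)$ turns this into a double integral; symmetrizing in $x\leftrightarrow y$ and using that $\Ci_{-2\a}$ is even converts the integrand into $-\Ci_{-2\a}(x-y)|f(x)-f(y)|^2$, which is the asserted formula, the multiplicative constant being fixed by tracking the $2\pi$-factors of the convolution and pairing conventions. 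Equivalently, one may expand the claimed double integral directly from the Fourier series $\Ci_{-2\a}(t)=\sum_{k\ge1}|k|^{2\a}\cos kt$ together with Parseval: the diagonal contributions of the form $\int\!\!\int\Ci_{-2\a}(x-y)|f(x)|^2$ vanish because $\Ci_{-2\a}$ has vanishing mean (no $k=0$ Fourier coefficient), while the cross terms reassemble exactly $\sum_{k\neq0}|k|^{2\a}|f_k|^2=\ce_\a[f]$. As the previous Proposition already showed $b=0$ for $\a<1$, no $\|f'\|_2^2$ term appears. One should also check convergence: for $\a<1$ the kernel $\Ci_{-2\a}$ has a nonintegrable singularity of order $|t|^{-2\a-1}$ at the origin (cf. the Appendix), but it is tested against $|f(x)-f(y)|^2$, which vanishes to second order on the diagonal, so the integrand is $O(|x-y|^{1-2\a})$ and the double integral converges absolutely.

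The case $\a=1$ is the genuinely different one and is where the main work lies. Here $a_k=k^2$, and $\Ci_{-2}$ ceases to be a function: differentiating the identity $\sum_{k\in\bz}e^{ikt}=2\pi\sum_n\delta(t-2\pi n)$ twice shows that $\sum_{k\ge1}k^2\cos kt$ is a distribution supported at the lattice points, so that its regular part—the density of the measure $\mu$ of Theorem \ref{thm:condposdef}—vanishes away from the origin, i.e. $\Ci_{-2}=0$ in the sense of the statement. All of $\langle\Phi,\cdot\rangle$ is then carried by the singular term $bf''(0)$ of Theorem \ref{thm:condposdef}; comparing with $\langle\Phi,f\rangle=\sum_k k^2 f_k=-f''(0)$ gives $b=-1$. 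Applying this to $g=f^**f$, for which $g''(0)=-\sum_k k^2|f_k|^2=-\|f'\|_2^2$, yields $\ce_1[f]=b\,g''(0)=-b\,\|f'\|_2^2=\|f'\|_2^2$, which is precisely the stated formula with $\Ci_{-2}=0$ and $b=-1$.

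The main obstacle is thus not the $\a<1$ computation, which reduces to Fourier/Parseval bookkeeping once the zero-mean property of $\Ci_{-2\a}$ is exploited, but the careful treatment of the limiting case $\a=1$: one must recognize the degeneration of the kernel into a second derivative of the Dirac comb, confirm that its regular part vanishes, and correctly transfer the surviving distributional contribution at the origin into the $-b\|f'\|_2^2$ term, all the while keeping track of the normalization constant.
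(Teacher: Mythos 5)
Your proof is correct and follows essentially the same route as the paper: part $(ii)$ for $\a<1$ is exactly the paper's computation (apply the representation of $\langle\Phi,\cdot\rangle$ from the preceding Proposition to $g=f^**f$, change variables, and symmetrize in $x\leftrightarrow y$ using the evenness of $\Ci_{-2\a}$), while part $(i)$ is the weight comparison the paper dismisses as obvious. Your only deviation is at $\a=1$, where you obtain $\Ci_{-2}=0$ and $b=-1$ from the twice-differentiated Dirac comb and the identity $\langle\Phi,f\rangle=-f''(0)$, whereas the paper gets $\Ci_{-2}=0$ by directly computing that $\re\Li_{-2}(e^{it})$ vanishes for $t\neq0$ and then reads off $b=-1$ from $\ce_1[f]=\|f'\|^2_2$; the two arguments are equivalent in substance.
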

\begin{proof}
$(i)$ Obvious.
\\
$(ii)$ We have
$$
\ce_\a[f]=\langle\Phi,f^**f\rangle= \frac1{\pi}\int_\T\Ci_{-2\a}(t)((f^**f)(t)-(f^**f)(0))\ dt+b (f^**f)^{''}(0).
$$
Since $\Ci_{-2\a}(x-y)=\Ci_{-2\a}(y-x)$, we have
\begin{align*}
2\int_\T\Ci_{-2\a}(t)&((f^**f)(t)-(f^**f)(0))\ dt\\
&=2\int_{\T\times\T}\Ci_{-2\a}(t)\big(\ov{f}(x-t)f(x)-\ov{f}(x)f(x)\big)\,dt\,dx\\
&=2\int_{\T\times\T}\Ci_{-2\a}(x-y)\big(\ov{f}(y)f(x)-\ov{f}(x)f(x)\big)\,dy\,dx\\
&=\int_{\T\times\T}\Ci_{-2\a}(x-y)\big(\ov{f}(y)f(x)-\ov{f}(x)f(x)+\ov{f}(x)f(y)-\ov{f}(y)f(y)\big)\,dy\,dx\\
&=-\int_{\T\times\T}\Ci_{-2\a}(x-y)\big|f(x)-f(y)\big|^2\,dy\,dx\,.
\end{align*}
As for the second summand,
$$
(f^**f)^{''}(0)=-\big((f')^**f'\big)(0)=-\|f'\|^2_2\, ,
$$
which proves the equation. Since the quadratic form gives rise to the Sobolev norm, the last summand should vanish when $\a<1$. For $\a=1$, the Clausen function vanishes by a direct computation, and $\ce_\a[f]=\|f'\|^2_2$, giving $b=-1$.
\end{proof}

\subsection{The construction of the triple}\label{alphatripleonS1}

Let us consider, for each fixed $0<\alpha\leq1$, the Dirichlet form $\ce_\alpha$ on $L^2 (\T)$,
with  domain $\cf_\alpha :=\{f\in L^2 (\T):\ce_\alpha [f]<+\infty \}$.
\par\noindent
As shown in Corollary \ref{ClausenSobolev}, the Sobolev space $H^\alpha (\T )$ coincides with $\cf_\alpha$ and has norm
\[
\|f\|_\alpha^2 =\|f\|^2_{L^2 (\T)} + \ce_\alpha [f]\, .
\]
We summarize below the main known properties of the Dirichlet spaces on the circle we are considering.
Proofs may be found in \cite{FOT}.
\begin{prop}
The Dirichlet space $(\ce_\alpha ,\cf_\alpha)$ on $L^2 (\T)$ is regular in the sense that the Dirichlet algebra $\cf_\alpha \cap C(\T)$ is dense both in $C(\T)$ with respect to the uniform norm and in $\cf_\alpha$ with respect to the graph norm. In particular, the algebra $C^\gamma (\T)$ of H\"{o}lder continuous functions of order $\gamma \in(\a,1]$ is a form core contained in the Dirichlet algebra.
We observe that $\cf_\a \subset C(\T)$,  for $\a>\frac12$.
\end{prop}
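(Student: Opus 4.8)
The statement to prove collects several standard facts about the Dirichlet spaces $(\ce_\a, \cf_\a)$ on the circle: regularity (density of the Dirichlet algebra $\cf_\a \cap C(\T)$ both in $C(\T)$ and in $\cf_\a$), the fact that $C^\gamma(\T)$ with $\gamma \in (\a,1]$ is a form core lying inside the Dirichlet algebra, and the Sobolev embedding $\cf_\a \subset C(\T)$ for $\a > \tfrac12$.

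The plan is to leverage the identification $\cf_\a = H^\a(\T)$ established in Corollary \ref{ClausenSobolev}, so that all the assertions can be read off from classical Sobolev theory on the torus together with the general theory of regular Dirichlet forms. First I would recall that, by Corollary \ref{ClausenSobolev}$(i)$, the form norm $\|f\|_\a^2 = \|f\|_{L^2(\T)}^2 + \ce_\a[f]$ is equivalent to the standard Sobolev norm on $H^\a(\T)$, for which the Fourier description $\sum_k (1+|k|^{2\a})|f_k|^2 < \infty$ is available. From this, the embedding $H^\a(\T) \hookrightarrow C(\T)$ for $\a > \tfrac12$ is the Sobolev embedding theorem: one estimates $\sum_k |f_k| \le \big(\sum_k (1+|k|^{2\a})|f_k|^2\big)^{1/2}\big(\sum_k (1+|k|^{2\a})^{-1}\big)^{1/2}$, where the second factor converges precisely when $2\a > 1$, giving absolute and uniform convergence of the Fourier series and hence continuity of $f$; this proves the final sentence. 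For the regularity statement, trigonometric polynomials lie in $\cf_\a \cap C(\T)$ and are dense in $C(\T)$ in the uniform norm (Stone--Weierstrass / Fej\'er) and dense in $H^\a(\T)$ in the graph norm (truncating the Fourier series), so the Dirichlet algebra is dense in both topologies as required.

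Next I would treat the H\"older algebra $C^\gamma(\T)$ with $\gamma \in (\a,1]$. The key point is the inclusion $C^\gamma(\T) \subset H^\a(\T)$ for $\gamma > \a$: using the integral expression for $\ce_\a$ from Corollary \ref{ClausenSobolev}$(ii)$, namely $\ce_\a[f] = -\tfrac1{2\pi}\int_{\T\times\T}\Ci_{-2\a}(x-y)|f(x)-f(y)|^2\,dx\,dy$, I would bound the energy of a H\"older function of exponent $\gamma$ by combining the H\"older estimate $|f(x)-f(y)|^2 \le C\,|x-y|^{2\gamma}$ with the diagonal singularity of the Clausen kernel. From the estimates on $\Ci_{-2\a}$ recorded in the appendix (Lemma \ref{Clausen-estimate}), one has $|\Ci_{-2\a}(t)| \lesssim |t|^{2\a-1}$ near $t=0$, so the integrand is controlled by $|x-y|^{2\gamma + 2\a - 1}$, which is integrable over $\T\times\T$ precisely when $2\gamma + 2\a - 1 > -1$, i.e. $\gamma > -\a$, and is certainly finite for $\gamma > \a$; hence $C^\gamma(\T) \subset \cf_\a$ and is a subalgebra of the Dirichlet algebra. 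That $C^\gamma(\T)$ is a form core is then the assertion that it is dense in $\cf_\a$ in the graph norm, which again follows since it contains the trigonometric polynomials already shown to be dense, or alternatively from the standard fact (see \cite{FOT}) that a dense subalgebra stable under the relevant approximations is a core.

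The main obstacle is the H\"older-to-Sobolev inclusion and the core property, since these are the only assertions not purely formal: they require the quantitative behaviour of the Clausen kernel $\Ci_{-2\a}$ near the diagonal. Everything hinges on the near-diagonal asymptotic $\Ci_{-2\a}(t) \sim -c_\a |t|^{2\a-1}$ as $t\to 0$, which transfers the problem to the elementary integrability computation above; I would therefore isolate this estimate first (citing Lemma \ref{Clausen-estimate}) and make sure the borderline exponent $\gamma = \a$ is genuinely excluded, explaining why $\gamma$ must be strictly larger than $\a$. The remaining density and embedding statements are then routine Fourier-analytic facts on the torus, and since the proposition merely summarizes known results whose proofs are referenced to \cite{FOT}, I would keep the argument concise, citing the general theory of regular Dirichlet forms for the core statement rather than reproving it from scratch.
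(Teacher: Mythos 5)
Your overall strategy is sound, and it is worth noting that the paper itself offers no proof of this Proposition: it is stated as a summary of known facts, with proofs referred to \cite{FOT}. Your plan --- identify $\cf_\a$ with $H^\a(\T)$ via Corollary \ref{ClausenSobolev}, get $\cf_\a\subset C(\T)$ for $\a>\frac12$ from the Cauchy--Schwarz/Fourier argument, and obtain both density statements and the core property from density of trigonometric polynomials in $C(\T)$ (Fej\'er) and in $H^\a(\T)$ (truncation of Fourier series) --- is correct and is the natural way to supply the missing details.

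However, the one step you yourself single out as non-routine contains a genuine error. You quote Lemma \ref{Clausen-estimate} as giving $|\Ci_{-2\a}(t)|\lesssim |t|^{2\a-1}$ near $t=0$. What the lemma actually proves is $\lim_{t\to0}|t|^{1+2\a}\Ci_{-2\a}(t)=-\G(1+2\a)\sin\pi\a$, i.e. the kernel blows up like $|t|^{-(1+2\a)}$; the exponent you wrote has the wrong sign. Your version cannot be correct: $|t|^{2\a-1}$ is integrable on $\T$ for every $\a>0$, so with that bound \emph{every} bounded function would have finite energy, giving $L^\infty(\T)\cap L^2(\T)\subset\cf_\a=H^\a(\T)$, which is false. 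This is also why your integrability condition degenerates to the vacuous $\gamma>-\a$, leaving you unable to explain where the hypothesis $\gamma>\a$ enters --- a tension you noticed but did not resolve. With the correct asymptotics the computation works and is sharp: since the kernel is bounded away from the diagonal (Lemma \ref{Clausen-estimate} also gives $|\Ci_{-2\a}(t)|\,|t|^{2\a+1}\leq 23$ for $|t|\geq\pi/4$), for $f\in C^\gamma(\T)$ the integrand is dominated near the diagonal by a constant times $|x-y|^{2\gamma-(1+2\a)}$, which is integrable precisely when $2\gamma-2\a-1>-1$, i.e. $\gamma>\a$. This yields $C^\gamma(\T)\subset\cf_\a$ for $\gamma\in(\a,1]$ and shows exactly why the borderline $\gamma=\a$ is excluded. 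With this single correction (and noting that the Hölder statement is vacuous for $\a=1$, where the kernel representation does not apply) your argument is complete.
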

We now construct a Spectral Triple associated to the above Dirichlet space for each value of the parameter $0<\alpha< 1$, the case $\a=1$ having been described above. The construction is  given in terms of a closable derivation with values in a suitable bimodule, underlying any regular Dirichlet form (see  \cite{CiSa},  \cite{Cip}).
%
%When $\a=1$ we set $\partial_\a f=df$, and we get $\ce_\a[f]=\|\partial_\a f\|^2_{L^2(\O^1(\T))}$.
%
%Let us now choose $\a<1$.
%
By Corollary \ref{ClausenSobolev},
\[
\ce_\alpha [f] = \frac1{4\pi^2}\int_\T \int_\T \f_\a(z-w) |f(z)-f(w)|^2\, dzdw,
\]
where we set $\f_\a=-2\pi\Ci_{-2\a}$.

The linear map defined as
\[
\partial_\alpha :\cf_\alpha\rightarrow L^2 (\T\times\T)\qquad \partial_\alpha (f)(z,w)=
\f_\a(z-w)^{1/2}(f(z)-f(w))
\]
is a closed operator acting on $L^2(\T)$, since $\ce_\alpha [f]=\|\partial_\alpha f\|^2_{L^2 (\T\times\T )}$.

Endowing the Hilbert space $L^2 (\T\times\T)$ with the $C(\T)$-bimodule structure defined by the left and right actions of $C(\T)$ given by
\[
(f\xi)(z,w):=f(z)\xi (z,w)\, ,\qquad (\xi g)(z,w):=\xi (z,w) g(w)\,, \qquad z,w\in \T\,,
\]
and by the anti-linear involution
\[
(\mathcal{J}\xi ) (z,w):=\overline{\xi (w,z)},\qquad z,w\in \T\, ,
\]
for $f,g\in C(\T)$ and $\xi\in L^2 (\T\times\T )$, it is easy to see that the map $\de_\a$ is a {\it derivation} on the Dirichlet algebra $\cf_\a \cap C(\T)$,
since it is {\it symmetric}
\[
\mathcal{J}(\partial_\alpha (f))=\partial_\alpha (\overline{f}),\qquad f\in C^\gamma (\T),
\]
and satisfies the {\it Leibniz rule}
\[
\partial_\alpha (fg)=(\partial_\alpha f)g+f(\partial_\alpha g),\qquad f,g\in C^\gamma (\T)\, .
\]
Moreover, the map $\de_\a$ is a {\it differential square root} of the self-adjoint operator $\Delta^\alpha$ on $L^2 (\T)$ having $(\ce_\alpha , \cf_\alpha )$ as closed quadratic form, because of the identities
\[
\ce_\alpha [f]:=\|\Delta^{\alpha/2} f\|^2_{L^2 (\T)} =
\|\partial_\alpha f\|^2_{L^2 (\T\times\T )},\qquad f\in\cf_\alpha\, .
\]
We accommodate in the following Lemma some technical results which will be useful later.

\begin{lem}\label{lem:technical}
Let us denote by $\{e_k :k\in\mathbb{Z}\}$ the orthonormal basis of eigenfunctions of the standard Laplacian $\Delta$:
\[
e_k (t):= e^{ikt},\qquad \Delta e_k = k^2e_k.
\]
\begin{enumerate}
\item $\ce_\alpha (e_k ,e_j)=(e_k ,\partial_\a^*\partial_\a e_j)=|k|^{2\a}\d_{kj}$.
\item Let $e'_n=|n|^{-\a}\partial_\a e_n$, $n\in\bz\setminus\set{0}$. Then, the family $\{e'_n\}_{n\in\bz\setminus\set{0}}$ is an orthonormal basis for the range of $\partial_\a$.
 \item The following equation holds:
\begin{equation}\label{d*(de_p)e_n}
\partial_\a^* ((\partial_\a e_p)e_n)=\frac12(|p|^{2\a }+|n+p|^{2\a }-|n|^{2\a })e_{n+p}.
\end{equation}
%\item For any $k,p\in\bz$, $|p|^{2\a }+|k|^{2\a }-|k-p|^{2\a }\leq 2 |k|^{\a}|p|^{\a}$.
\item For any $k,p\in\bz$, $(|p|^{2\a }+|k|^{2\a }-|k-p|^{2\a })^2\leq 4 |k|^{2\a}|p|^{2\a}$.
\item Let us consider the map $S_f :C(\T)\rightarrow L^2 (\T\times\T )$ defined, for a fixed $f\in C(\T)$, as $S_f g:=(\partial_\alpha f)g$, with $g\in C(\T)$. Then, for $s>\a^{-1}$ and $f\in H^\a$, the operators $(\partial_\a\partial_\a^*)^{-s/4}S_{f}S^*_{f}(\partial_\a\partial_\a^*)^{-s/4}$ and $(\partial_\a^*\partial_\a)^{-s/4}S^*_{f}S_{f}(\partial_\a^*\partial_\a)^{-s/4}$ are trace class, and
\begin{align}
&\tr((\partial_\a\partial_\a^*)^{-s/4}S_{f}S^*_{f}(\partial_\a\partial_\a^*)^{-s/4})
\leq  2\z(\a s)\ce_\a[f]
=\tr((\partial_\a^*\partial_\a)^{-s/4}S^*_{f}S_{f}(\partial_\a^*\partial_\a)^{-s/4}).
\label{lem:technical-V}\\
&\Res_{s=1/\a}\tr((\partial_\a\partial_\a^*)^{-s/4}S_{f}S^*_{f}(\partial_\a\partial_\a^*)^{-s/4})=
\begin{cases}0 &\mathrm{ if }\ \a<1,\\
2\ce_\a[f] &\mathrm{ if }\ \a=1.
\end{cases} \label{Res1} \\
&\Res_{s=1/\a}\tr((\partial_\a^*\partial_\a)^{-s/4}S^*_{f}S_{f}(\partial_\a^*\partial_\a)^{-s/4})=\frac2\a\ce_\a[f].\label{Res2}
\end{align}
\end{enumerate}
\end{lem}
\begin{proof}
The equality $\partial_\a^*\partial_\a=\Delta^\a$ gives $(1)$, while
$(2)$ follows from a direct computation, and $(3)$ amounts to verify that $(\partial_\a e_k,(\partial_\a e_p)e_n)=\frac12(p^{2\a }+(n+p)^{2\a }-n^{2\a })\d_{k,n+p}$.
We now show $(4)$.
We observe that it certainly holds for $p=0$ or $k=0$. When they do not vanish, we must prove that
\begin{equation}
-1\leq\frac {|p|^{2\a }+|k|^{2\a }-|k-p|^{2\a }}{2 |k|^{\a}|p|^{\a}}\leq 1,
\end{equation}
or, setting $|p/k| = e^{2t}$, where we may assume $t\geq0$,
\begin{equation}
-1\leq \frac12\left(e^t\mp e^{-t}\right)^{2\a}-\cosh(2\a t)\leq 1,
\end{equation}
the $\pm$ sign being the sign of $pk$. Taking the worst cases, we get
\begin{equation}
-1\leq \frac12\left(e^t- e^{-t}\right)^{2\a}-\cosh(2\a t),
\quad \frac12\left(e^t+ e^{-t}\right)^{2\a}-\cosh(2\a t)\leq 1,
\end{equation}
or, equivalently, $2\sinh(\a t)\leq\left(2\sinh t\right)^{\a}$ and
$\left(2\cosh t\right)^{\a}\leq 2\cosh(\a t)$. Passing to the logarithms, it is enough to prove that
$f_\a(t):=\log(2\sinh(\a t))-\a\log \left(2\sinh t\right)\leq0 $ and
$ g_\a(t):=\a\log\left(2\cosh t\right)-\log(2\cosh(\a t))\leq 0$.
This follows because both functions tend to 0 for $t\to+\infty$, and
$f_\a'(t) =\a(\coth(\a t)-\coth t)  \geq 0$ for $\a\in[0,1]$ since $\coth$ is decreasing, and $g_\a'(t) = \a(\tanh t-\tanh(\a t))  \geq 0$ for $\a\in[0,1]$ since $\tanh$ is increasing.
\\
As for the inequality in \eqref{lem:technical-V}, we have
\begin{equation}\label{estimate1}
\begin{aligned}
 \|S^*_{f}\partial_\a e_k\|^2
&=\sum_{n}|(e_n,S^*_{f}\partial_\a e_k)|^2
=\sum_{n}  |(\partial_\a f,((\partial_\a e_k)e_{-n})|^2\\
&=\frac14\sum_{n}  (|k|^{2\a }+|n-k|^{2\a }-|n|^{2\a })^2  |( f,e_{n+k})|^2\\
&=\frac14\sum_{p}  (|k|^{2\a }+|p|^{2\a }-|p-k|^{2\a })^2 |( f,e_{p})|^2\\
&\leq |k|^{2\a}\sum_{p} |p|^{2\a }|( f,e_{p})|^2\
= |k|^{2\a} \ce_\a[ f]\, ,
\end{aligned}
\end{equation}
where the inequality in the last row follows by $(4)$. Then
\begin{equation} \label{estimate2}
\begin{aligned}
\tr&((\partial_\a\partial_\a^*)^{-s/4}S_{f}S^*_{f}(\partial_\a\partial_\a^*)^{-s/4})
=\sum_k((\partial_\a\partial_\a^*)^{-s/4}e'_k,S_{f}S^*_{f}(\partial_\a\partial_\a^*)^{-s/4}e'_k)\\
&=\sum_k |k|^{-(s+2)\a} \|S^*_{f}\partial_\a e_k\|^2
\leq\sum_{k} |k|^{-s\a} \|\partial_\a f\|^2_{L^2(\T\times\T)}
=  2\z(\a s) \ce_\a[ f]\, .
\end{aligned}
\end{equation}
Concerning the equality in \eqref{lem:technical-V} we have
\begin{equation}\label{formula3}
\begin{aligned}
\tr((\partial_\a^*\partial_\a)&^{-s/4}S^*_{f}S_{f}(\partial_\a^*\partial_\a)^{-s/4})
=\sum_k((\partial_\a^*\partial_\a)^{-s/4}e_k,S^*_{f}S_{f}(\partial_\a^*\partial_\a)^{-s/4}e_k)\\
&=\sum_k |k|^{-s\a} \|S_{f}e_k\|^2
=\sum_{k} |k|^{-s\a}\|(\partial_\a f)e_k)\|^2
=  2\z(\a s) \ce_\a[ f]\, .
\end{aligned}
\end{equation}
Eq. \eqref{Res1} for $\a=1$ is straightforward, we now prove it  for $\a<1$. By \eqref{estimate1},
$$
|k|^{-2\a}\|S^*_{f}\partial_\a e_k\|^2=
\sum_{p}  \left(\frac{|k|^{2\a }+|p|^{2\a }-|p-k|^{2\a }}{2 |k|^{\a}|p|^{\a}}\right)^2
(|p|^\a|( f,e_{p})|)^2,
$$
where the first factor in the series is bounded by 1, and the second is in $\ell^1(\bz)$. By dominated convergence,
$$
\lim_{|k|\to\infty}|k|^{-2\a}\|S^*_{f}\partial_\a e_k\|^2=
\sum_{p}  \lim_{|k|\to\infty}\left(\frac{|k|^{2\a }+|p|^{2\a }-|p-k|^{2\a }}{2 |k|^{\a}|p|^{\a}}\right)^2
(|p|^\a|( f,e_{p})|)^2=0.
$$
Formula \eqref{estimate2} and the asymptotic character of the residue prove the thesis. Finally, \eqref{Res2} follows directly by \eqref{formula3}.
\end{proof}

We now come to the promised family of spectral triples.
As mentioned above, we consider a deformation of the standard $L^2$ De Rham complex $(L^2(\O^*(\T)),\partial)$ on $\T$, where $L^2(\O^0(\T))$ resp. $L^2(\O^1(\T))$ denotes the $L^2$ functions, resp. $L^2$ 1-forms on $\T$, and $\partial$ is the (densely defined) external derivation.
Our deformation will be the $L^2$ complex $(L^2(\O_\a^*(\T)),\partial_\a)$ on $\T$, where $L^2(\O_\a^0(\T)):=L^2(\O^0(\T))$,  $L^2(\O^1(\T)):=L^2(\T\times\T)$, and the
deformed external derivation $\partial_\a$ has been defined above.
The triple $(\mathcal{A}_\alpha, \mathcal{K}_\alpha, D_\alpha)$ is then given by
the Hilbert space $\ck_\a:=L^2(\O_\a^*(\T))=L^2(\O^1_\a(\T))\oplus L^2(\O^0(\T))$, the Dirac operator $(D_\alpha,{\rm dom}(D_\alpha))$ on  $\mathcal{K}_\alpha$ is defined as
\begin{equation}\label{Dalpha}
D_\alpha:=\left(
     \begin{array}{cc}
       0 & \partial_\alpha \\
       \partial_\alpha^* & 0 \\
     \end{array}
   \right),
   \quad
   \text{so that}
   \quad
D_\alpha\left(
   \begin{array}{c}
     \xi \\
     f \\
   \end{array}
 \right)
 =\left(
   \begin{array}{c}
     \partial_\alpha f \\
     \partial_\alpha^* \xi \\
   \end{array}
 \right),
\end{equation}
on the domain ${\rm dom}(D_\alpha):={\rm dom}(\partial_\alpha^*)\oplus \cf_\alpha$, and the $*$-algebra $\ca_\a$ is defined as $\ca_\a=\{f\in C(\T):\|[D,L_f]\|<\infty\}$, where, if $f\in C(\T)$, $L_f$ denotes its left action on $\mathcal{K}_\alpha$ resulting from the direct sum of those on $L^2 (\T\times\T )$ and on $L^2 (\T)$.
We also consider the seminorm $p_\a$ given by
\begin{equation}\label{p-alphaseminorm}
\pa(f)^2= \frac1{2\pi}\, \sup_{x\in\T}\int_\T \,\f_{\a}(x-y)|f(x)-f(y)|^2 dy.
\end{equation}
\begin{prop}\label{smoothAlg}
For $\a\in(0,1)$ the algebra $\mathcal{A}_\alpha$ defined above coincides with
$\{f\in C(\T):\pa(f)<\infty\}$, and  $C^{0,\a+\eps}(\T) \subset \mathcal{A}_\alpha$, hence it is a uniformly dense subalgebra of $C(\T)$. Moreover, for $\a \geq \frac12$,
$\mathcal{A}_\alpha \subset C^{0,\a}(\T)$. Analogous results hold true upon replacing the left module structure of $\mathcal{K}_\alpha$ by the right one.
\end{prop}
\begin{proof}
Let us consider first the map $S_f :C(\T)\rightarrow L^2 (\T\times\T )$ defined as $S_f g=(\partial_\alpha f)g$ for a fixed $f\in C(\T)$.
This map extends to a bounded map on $L^2 (\T)$, provided $f\in \mathcal{A_\alpha}$, because
\begin{eqnarray}
\nonumber  \|S_f g\|^2_{L^2 (\T\times\T )} &=& \frac1{4\pi^2}\int_{\T\times\T } |(\partial_\alpha f)(z,w)g(w)|^2 \, dzdw \\
\nonumber   &=& \frac1{4\pi^2} \int_\T  |g(w)|^2 \int_\T  \f_\a(z-w)|f(z)-f(w)|^2  \, dzdw\\
\nonumber   &\leq & \frac1{2\pi} \|g\|^2_{L^2 (\T)} \sup_{w\in\T} \int_\T \f_\a(z-w)|f(z)-f(w)|^2\, dz,\qquad g\in L^2(\T),
\end{eqnarray}
so that
\begin{equation}\label{norm-equality}
\|S_f \|^2 = \frac1{2\pi}\, \sup_{w\in\T} \int_\T  \f_\a(z-w)|f(z)-f(w)|^2\, dz =\pa(f)^2\, .
\end{equation}
Let us compute now, using the Leibniz rule for the derivation $\partial_\alpha$, the quadratic form of the commutator $[D_\alpha ,L_f]$, defined on the domain ${\rm dom}(D_\alpha):={\rm dom}(\partial_\alpha^*)\oplus \cf_\alpha$:
\begin{eqnarray}
\nonumber \bigl(\xi '\oplus g'|[D_\alpha ,L_f]\xi\oplus g\bigr) &=& \bigl(D_\alpha (\xi '\oplus g')|L_f (\xi\oplus g)\bigr)- \bigl(L_{f^*}(\xi '\oplus g' )|D_\alpha (\xi\oplus g)\bigr) \\
\nonumber    &=& \bigl(\partial_\alpha g'\oplus \partial_\alpha^*\xi '|f\xi\oplus fg\bigr)- \bigl(f^*\xi '\oplus f^*g'|\partial_\alpha g\oplus \partial_\alpha^*\xi\bigr) \\
\nonumber    &=& (\partial_\alpha g'|f\xi) + (\partial_\alpha^* \xi '|fg) - (\xi '|f\partial_\alpha g) - (f^*g'|\partial_\alpha^*\xi ) \\
\nonumber    &=& (f^*\partial_\alpha g'|\xi) + (\xi '|\partial_\alpha (fg)) - (\xi '|f\partial_\alpha g) - (\partial_\alpha (f^*g' )|\xi ) \\
\nonumber    &=& (\xi '|(\partial_\alpha f)g) - ((\partial_\alpha f^* )g'|\xi )\, . \\
\nonumber    &=& (\xi '|S_f g) - (S_{f^*} g'|\xi )\, .
\nonumber
\end{eqnarray}
Hence
\begin{equation}\label{commutator}
[D_\alpha ,L_f] = \left(
                    \begin{array}{cc}
                      0 & S_f \\
                      -S^*_{f^*} & 0 \\
                    \end{array}
                  \right)\, ,\qquad a\in \mathcal{A}_\alpha,
\end{equation}
therefore $[D_\alpha ,L_f]$ extends to a bounded operator on $\mathcal{K}_\alpha$ if and only if $p_\alpha(f)<\infty$, and $\| [D_\alpha ,L_f]\|=\|S_f\|=\pa(f)$. The relations w.r.t. the spaces of H\"{o}lder continuous functions follow by  Proposition  \ref{Prop:Holder}.
\end{proof}

\begin{thm}\label{tripleOnS1}
%Let us consider the regular Dirichlet form $(\ce_\alpha ,\cf_\alpha)$ on $L^2(\T)$, the associated derivation $(\partial_\alpha ,\cf_\alpha)$ and the triple $(\ca_\a, \ck_\a, D_\a)$ described above. Then,
Let $\a\in(0,1]$. The triple $(\mathcal{A}_\alpha, \mathcal{K}_\alpha, D_\alpha)$ described above is a densely defined Spectral Triple on the algebra $C(\T)$, in the sense of  Connes. In particular,
\begin{enumerate}
\item[$(i)$] $D_\alpha^{-1}$ has compact resolvent, and
the function $\zeta_D(s)=\tr(|D_\a|^{-s})=4\zeta(\a s)$, where by $|D_\a|^{-s}$ we mean the functional calculus with the function
$\begin{cases}t^{-s} & t>0,\\
0  & t=0.
\end{cases} $
\item[$(ii)$] The   dimension of the triple is $\a^{-1}$, and
$\Res_{s=1/\a}\tr(f|D_\a|^{-s})=\frac4\a \int f(t)\ dt$.
\item[$(iii)$] The distance $d_D$ induced on $\T$ by the spectral triple satifies, for any $\eps>0$, $d_D(x,y) \geq \frac1{c_\eps} |x-y|^{\a+\eps}$, $x,y\in\T$. Moreover, if $\a\geq\frac12$, $d_D(x,y) \leq \frac1{\tilde{c}_\a} |x-y|^{\a}$, $x,y\in\T$. Here, $c_\eps$ and $\tilde{c}_\a$ are as in Proposition \ref{Prop:Holder}.
\item[$(iv)$] The Dirichlet form $\ce_\a$ can be recovered, for any $f\in H^\a(\bt)$, via the formulas
\begin{equation*}
\ce_\a[f]
%=\tr_\o\left(|D|^{-1/2}|[D,f]|^2|D|^{-1/2}\right)
=\frac2\a\lim_{s\to 1} (s-1) \tr (|D|^{s/2}|[D,f]|^2|D|^{s/2})
\end{equation*}
\end{enumerate}
\end{thm}
\begin{proof}
$(i)$ Notice first that, since the self-adjoint operators $\partial_\alpha^* \partial_\alpha$ and $\partial_\alpha \partial_\alpha^*$ are unitarily equivalent on the orthogonal complement of their kernels, it suffices to prove that $\partial_\alpha^* \partial_\alpha$ has discrete spectrum on $L^2 (\T)$. Indeed,  Lemma \ref{lem:technical} $(1)$ shows that the spectrum of the self-adjoint operator $\partial_\alpha^* \partial_\alpha$ is discrete and coincides with $\{k^{2\a} : k\in\mathbb{N}\}$. Since any non-zero eigenvalue of $D_\a$ has multiplicity 4, we get the formula for $\zeta_D$.
\item[$(ii)$] Follows by $(i)$ and a straightforward computation.
\item[$(iii)$]  	Observe that, using the notation of Proposition \ref{Prop:Holder},
	\begin{align*}
		d_D(x,y) & = \sup \set{ |f(x)-f(y)| : \norm{[D_\a,f]}  \leq 1} = \sup \set{ |f(x)-f(y)| : p_\a(f)  \leq 1} \\
		& \geq \frac1{c_\eps} \sup \set{ |f(x)-f(y)| : \norm{f}_{  C^{0,\a+\eps}(\T) } \leq 1 } = \frac1{c_\eps} |x-y|^{\a+\eps},
	\end{align*}
	and, if $\a\geq \frac12$,
	\begin{align*}
		d_D(x,y) &  = \sup \set{ |f(x)-f(y)| : p_\a(f)  \leq 1} \\
		& \leq \frac1{\tilde{c}_\a} \sup \set{ |f(x)-f(y)| : \norm{f}_{  C^{0,\a}(\T) } \leq 1 } = \frac1{\tilde{c}_\a} |x-y|^{\a}.
	\end{align*}
\item[$(iv)$] Follows by \eqref{commutator} and Lemma \ref{lem:technical} (5).

\end{proof}

%\subsection{The pairing with $K$-theory}
%
%In this section we show that the phase of the Dirac operator gives rises to a Fredholm module that pairs nontrivially with respect to K-theory on the circle. As a matter of facts, up to now we have an even spectral triple, which would give rise to an even Fredholm module over $C(\T)$, whereas, in order to detect the non-trivial topology of $\T$, we need an odd module. There is a well-known device to obtain an odd module from an even one, and consists in adding a further grading to the module, which results in a $1$-graded module, and then in making use of Bott periodicity to reduce the latter to an odd one. We mimick this procedure in the following Proposition.

As explained above, the pairing with odd K-theory is recovered by a 1-graded Fredholm module  $ (\ck_\a,F_\a,\g,\eps_\a)$, where $F_\a$ is the phase of $D_\a$, and the further grading $\eps_\a$ is a simple deformation of the grading $\eps$ described in formula \eqref{epsilon}, namely $\eps_\a=-i \begin{pmatrix} 0 & V_\a \\ V_\a^* & 0 \end{pmatrix}$, where
$V_\a$ is  given by $V_\a e_j=\sgn(j)e'_j=\sgn(j)|j|^{-\a}\partial_\a e_j$, $j\ne0$, $V_\a e_0=0$.

\begin{prop}\label{FrModS1}
Let $\a\in(0,1]$. The quadruple $\cf_\a= (\ck_\a,F_\a,\g,\eps_\a)$ is a tamely degenerate
 $1$-graded Fredholm module on $ C(\bt)$. The module $\cf_\a^+$ constructed as in
 Proposition \ref{Ex8.8.4}, is an   odd Fredholm module on $ C(\bt)$, and
 the pairing with K-Theory gives $\langle\, \cf_\a^+ ,e_k\rangle=k$.
\end{prop}
\begin{proof}
For notational simplicity,  we drop the index $\a$.
Let us observe that $F=\begin{pmatrix}0&W\\ W^*&0\end{pmatrix}$, where $W$ is the partial isometry given by $We_i=e'_i$, $i\ne0$, $We_0=0$. As a consequence, setting $Se_j=\sgn(j)e_j$, we get $V=WS$, hence $i\eps=(I\oplus S)F(I\oplus S)$. A direct computation  shows
$P_0=[\ker(F)]=[\ker(F^2)]=1-F^2= Q_0\oplus (1- S^2)$, where $Q_0$ is the projection on $\ker(\partial_\a^*)$. Then the support of $(I\oplus S)F(I\oplus S)$ coincides with the support of $F$, which is $1-P_0$. Therefore $-\eps^*\eps=\big( (I\oplus S)F(I\oplus S)\big)^2=F^2=1-P_0$. We then compute
\begin{equation}\label{Ftilde}
i\eps F=\begin{pmatrix}WSW^*&0\\ 0&S\end{pmatrix}=iF\eps\,,
\end{equation}
hence $[\eps,F]=0$. Moreover, $\eps$ is clearly skew-adjoint. We now prove the compactness of $[\eps,\pi(f)]$. Indeed,
\begin{align*}
[i\eps,\pi(f)]
=&[(I\oplus S)F(I\oplus S),\pi(f)]\\
=&[(I\oplus S),\pi(f)]F(I\oplus S)
+(I\oplus S)[F,\pi(f)](I\oplus S)
+(I\oplus S)F[(I\oplus S),\pi(f)]\,.
\end{align*}
The compactness of $[F,\pi(f)]$ follows by the spectral triple properties (cf. Proposition \ref{triple-module}), and the compactness of $[(I\oplus S),\pi(f)]=0\oplus[S,f]$ follows by the Toeplitz theory.
Therefore $\cf$ is a kernel-degenerate $1$-graded Fredholm module, and so, by Proposition \ref{Ex8.8.4}, $\cf^+$ is a kernel-degenerate ungraded Fredholm module. Moreover, $\cf^+$ is indeed non-degenerate, because $F^+ = V^*W = S$ has a one-dimensional kernel, hence $(F^+ )^2-I$ is compact.
\end{proof}

\section{Spectral triples on the Sierpinski gasket}

\subsection{Sierpinski Gasket and its Dirichlet form}

We denote by $K$ the Sierpi\'nski gasket. Introduced in \cite{Sierpinski} as a curve with a dense set of ramification points, it has been the object of various investigations in Analysis \cite{Kiga}, Probability \cite{Ku,Bar} and Theoretical Physics \cite{RT}.
\par\noindent
Let $p_0, p_1,p_2\in \mathbb{R}^2$ be the vertices of an equilateral triangle of unit length and consider the contractions $w_i :\mathbb{R}^2\rightarrow\mathbb{R}^2$ of the plane: $w_i(x):= p_i+\frac12(x-p_i)\in\br^2$. Then $K$ is the unique fixed-point w.r.t. the contraction map $E \mapsto w_0(E)\cup w_1(E)\cup w_2(E)$ in the set of all compact subsets of $\br^2$, endowed with the Hausdorff metric. Two ways of approximating $K$ are shown in Figures \ref{Fig1} and \ref{Fig2}.
\vskip0.2truecm\noindent
Let us denote by $\Sigma_m :=\{0,1,2\}^{m}$ the set of words of length $m\ge 0$ composed by $m$ letters chosen in the alphabet of three letters $\{0,1,2\}$ and by $\Sigma :=\bigcup_{m\ge 0}\Sigma_m$ the whole vocabulary (by definition $\Sigma_0 :=\{\emptyset\}$). A word $\sigma\in \Sigma_m$ has, by definition, length $m$ and this is denoted by $|\sigma |:=m$. For $\sigma =\s_1 \s_2\dots \s_m\in\Sigma_m$ let us denote by $w_\sigma$ the contraction $w_\sigma :=w_{\s_1}\circ w_{\s_2}\circ\dots \circ w_{\s_m}$.
\vskip0.2truecm\noindent
Let $V_0 :=\{p_0 ,p_1 ,p_2\}$ be the set of vertices of the equilateral triangle and $E_0 :=\{e_{0}, e_{1}, e_{2}\}$ the set of its edges, with $e_i$ opposite to $p_i$. Then, for any $m\ge 1$,  $V_m :=\bigcup_{|\sigma |=m} w_\sigma (V_0)$ is the set of vertices of a finite graph ($i.e.$ a one-dimensional simplex) denoted by $(V_m,E_m)$ whose edges are given by $E_m :=\bigcup_{|\sigma |=m} w_\sigma (E_0)$ (see Figure 2). The self-similar set $K$ can be reconstructed also as an Hausdorff limit either of the increasing sequence $V_m$ of vertices or of the increasing sequence $E_m$ of edges, of the above finite graphs. Set $V_* := \cup_{m=0}^\infty V_m$, and $E_* := \cup_{m=0}^\infty E_m$.
% figura
     \begin{figure}[ht]
 	 \centering
	 \psfig{file=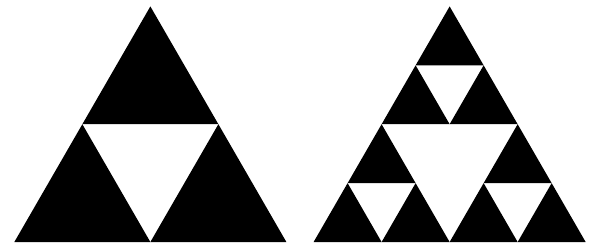,height=1.2in}
	 \psfig{file=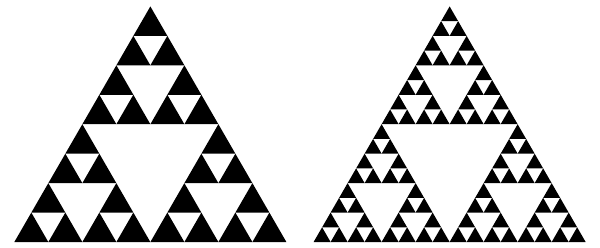,height=1.2in}
	 \caption{Approximations from above of the Sierpinski gasket.}
	 \label{Fig1}
     \end{figure}
     \begin{figure}[ht]
 	 \centering
	 \psfig{file=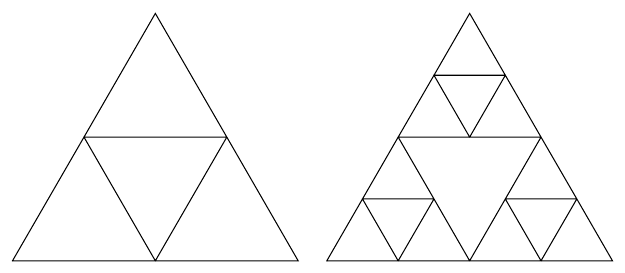,height=1.2in}
	 \psfig{file=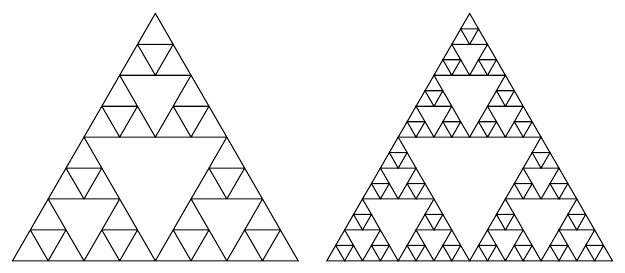,height=1.2in}
	 \caption{Approximations from below of the Sierpinski gasket.}
	 \label{Fig2}
     \end{figure}

In the present work a central role is played by the quadratic form $\ce : C(K)\to [0,+\infty ]$ given by
$$
\ce[f]=\lim_{m\to\infty} \left(\frac53\right)^m\sum_{e\in E_m}|f(e_+)-f(e_-)|^2,
$$
where each edge $e$ has been arbitrarily oriented, and $e_-,e_+$ denote its source and target. It is a regular Dirichlet form since it is lower semicontinuous, densely defined on the subspace  $\cf:= \set{f\in C(K) : \ce[f]<\infty}$ and satisfies the {\it Markovianity property}
\begin{equation}\label{contraction}
\ce [f\wedge 1]\leq \ce [f]\qquad f\in C(K;\br).
%\footnote{Here and in the following, we will denote by $C(K)$ the space of  real valued continuous functions. As a consequence, the quadratic Dirichlet form $\ce$ will be considered as a symmetric bilinear form over $\cf$.} .
\end{equation}

The existence of the limit above and the mentioned properties are consequences of the theory of {\it harmonic structures} on self-similar sets developed by  Kigami \cite{Kiga}.

As a result of the theory of Dirichlet forms \cite{BD,FOT}, the domain $\cf$ is an involutive subalgebra of $C(K)$ and, for any fixed  $f,g\in \cf$, the functional
\begin{equation}\label{rem:cs}
\cf\ni h\mapsto   \Gamma (f,g)(h):=\frac12\big(  \ce(f,hg)-\ce(fg,h)+\ce(g,fh) \big) \in \br
\end{equation}
defines a finite Radon measure called the {\it energy measure} (or {\it carr\'e du champ}) {\it of $f$ and $g$}. In particular, for $f\in\cf$, the measure $\Gamma (f,f)$ is nonnegative and one has the representation
\[
\ce [f]=\int_K 1\, d\Gamma (f,f) =\Gamma (f,f)(K)\qquad f\in\cf\, .
\]
In applications, $f$ may represent a configuration of a system, $\ce [f]$ its corresponding total energy and $\Gamma (f,f)$ represents its distribution. In homological terms,  $\Gamma$ is (up to the constant $1/2$) the Hochschild co-boundary of the 1-cocycle $\phi (f_0,f_1):=\ce (f_{0} ,f_1)$ on the algebra $\cf$.

\medskip

The Dirichlet or energy form $\ce$ should be considered as a Dirichlet integral on the gasket.
It is closable with respect to any Borel regular probability measure on $K$ which is positive on open sets and vanishes on finite sets (see \cite{Kiga} Theorem 3.4.6 and \cite{Kiga3} Theorem 2.6).
%
%
%It is lower semicontinuous on the space $L^2 (K,m)$, finite on the subspace  $\cf$, with respect to a wide range of positive Borel measures on $K$ and,
%
Once the measure $m$ has been chosen, $\ce$ is the quadratic form of a positive, self-adjoint operator on $L^2 (K, m)$, which may be thought of as a Laplace-Beltrami operator on $K$.
%However, since in the present work the Dirichlet form solely will play a role, the Laplace-Beltrami operator we need will be understood as the operator $\Delta : \cf\to \cf^*$ such that
%\[
% \langle \Delta f,g\rangle:=\ce (f,g)\quad f,g\in\cf\, .
%\]
A function $f\in \cf$ is said to be {\it harmonic} in a open set $A\subset K$ if, for any $g\in\cf$ vanishing on the complementary set $A^c$, one has
\[
\ce (f,g)=0\, .
\]
%Equivalently, $f$ is harmonic in $A$ if the measure $\Delta f$ is supported in $A^c$.
As a consequence of the Markovianity property \eqref{contraction}, a Maximum Principle holds true for harmonic functions on the gasket \cite{Kiga}.
In particular, one calls $0$-{\it harmonic} a function $u$ on $K$ which is harmonic in $V_0^c$. Equivalently, for given boundary values on $V_0$, $u$ is the unique function in $\cf$ such that $\ce[u] = \min\set{\ce[v]: v\in\cf, v|_{V_0} = u}$. More generally, one may call  $m$-{\it harmonic} a  function that,  given its values on $V_m$, minimizes the energy among all functions in $\cf$. For such functions we have
$$
\ce[u]=\left(\frac53\right)^m\sum_{e\in E_m}|u(e_+)-u(e_-)|^2\, .
$$
%It is not difficult to check that $f\in \cf$ is $m$-harmonic if and only if $\Delta f$ is a linear combination of Dirac measures supported on the vertices $V_m$.

\begin{dfn} (Cells, lacunas)
For any word $\s\in\Sigma_m$, define a corresponding {\it cell} in $K$ as follows
\[
C_\s :=w_{\s}(K)\, .
\]
%its {\it perimeter} by $\perim C_\s =w_{\s}(E_0)$, its (combinatorial) {\it boundary} by $\bordo C_\s =w_{\s}(V_0)$ and its (combinatorial) {\it interior} by $C_\s^o =C_\s\setminus \bordo C_\s$.
We will also define the {\it lacuna} $\ell_\emptyset$, see Fig.~\ref{lacuna}, as the boundary of the first removed triangle according to the approximation in Fig.~\ref{Fig1}. For any $\s\in\Sigma$, the lacuna $\ell_\s$ is defined as $\ell_\s :=w_{\s}(\ell_\emptyset)$. We shall use the notation $\ce_{C_\s}[u] = \lim_{m\to\infty} \left(\frac53\right)^m\sum_{e\in E_m, e\subset C_\s}|u(e_+)-u(e_-)|^2$.
     \begin{figure}[ht]
 	 \centering
	 \psfig{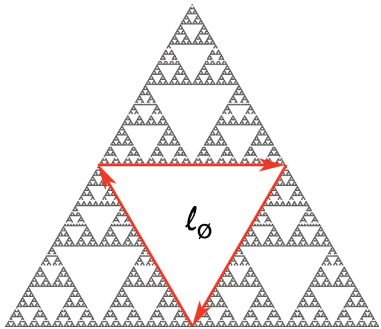}
	  \caption{The lacuna $\ell_\emptyset$}
	 \label{lacuna}
     \end{figure}
\end{dfn}

%-------------------------------------------------------------------------------------------------------------------

\subsection{The dimensional spectrum and volume}

We now choose $\alpha \in (0,1]$ and construct a triple on $K$ according to the prescriptions given in Section \ref{triplesonfractals}.
Let $S$ be the main lacuna $\ell_\emptyset$ of the gasket, identified isometrically with $\T$, and consider the triple  $\ct=(\pi,\ch,D_\a)$ constructed in Section \ref{alphatripleonS1}. Then $\ct_\emptyset=(\pi_\emptyset,\ch_\emptyset,D_\emptyset)$ is given by  $\pi_\emptyset(f)=\pi(f|_{\ell_\emptyset})$, $\ch_\emptyset=\ch$, $D_\emptyset=D_\a$, and,  for any $\sigma\in \cup_n\{0,1,2\}^n$, consider  the triple $(\pi_\s,\ch_\sigma,D_\sigma)$ on $\cc(K)$, where $\ch_\sigma=\ch_\emptyset$, $D_\sigma=2^{ |\sigma|}D_\emptyset$, and  $\pi_\sigma(f)=\pi_\emptyset(f\circ w_\sigma)$.

\begin{dfn}\label{dfn:thetriple}
Let us consider the following triple: $(\ca,\ch,D)$, where $\ch=\oplus_{\s\in\Sigma}\ch_\s$, $D=\oplus_{\s\in\Sigma}D_\s$, and $\ca$ is the subalgebra of $\cc(K)$ consisting of functions with bounded commutator with $D$, acting on $\ch$ via the representation $\pi=\oplus_{\s\in\Sigma}\pi_\s$.
According to the prescriptions of noncommutative geometry, we set $\oint f:=\tr_\omega(f|D|^{-d})$, where $tr_\omega$ is the (logarithmic) Dixmier trace, and $d$ is the abscissa of convergence of the zeta function $s\to \tr(|D|^{-s})$.
\end{dfn}

\begin{thm}\label{thm:volmeas}
Let $\alpha \in (0,1]$. The zeta function $\mathcal{Z}_D$ of $(\ca,\ch,D)$, i.e. the meromorphic extension of the function
$s\in \mathbb{C}\mapsto \tr(|D|^{-s})$, is given by
\[
\mathcal{Z}_D (s)=\frac{4\z(\a s)}{1-3\cdot 2^{- s}}\, ,
\]
where  $\z$ denotes the Riemann zeta function. Therefore, the dimensional spectrum  of the spectral triple is
$$
\mathcal{S}_{\it dim} =\{\a^{-1}\}\cup \Big\{\frac{\log3}{ \log 2} \Big( 1+\frac{2\pi i}{\log 3}k\Big): k\in \mathbb{Z} \Big\} \subset \mathbb{C}\, .
$$

As a consequence, the metric dimension $d_D$ of the spectral triple $(\ca,\ch,D)$, namely the abscissa of convergence of its zeta function, is $\max\{\a^{-1},\dH\}$,  $\dH=\frac{\log3}{\log 2}$ being the Hausdorff dimension.

When $\a> \frac{\log 2}{\log 3}$, i.e. $\dD=\dH$, $\mathcal{Z}_D$ has a simple pole in $d_D$, and the measure associated via Riesz theorem with the functional $f\to\oint f$ coincides with a multiple of the Hausdorff measure $H_{\dH}$:
$$
\vol(f) \equiv \int_K f\,d\,{\rm vol}:=tr_\omega(f|D|^{-\dH})=\frac{4\dH}{\log 3 }\frac{\z(\dH)}{(2\pi)^{\dH}} \int_K f\, dH_{\dH} \qquad f\in C(K).
$$
\end{thm}
\begin{proof}
The non vanishing eigenvalues of  $|D_\s|$ are exactly $\{2^{ |\s|}(2\pi k^\alpha )\}$, each one with multiplicity 4.

Hence $\tr(|D_\s|^{-s})=4\cdot 2^{-s  |\s|}(2\pi)^{-s}\sum_{k>0}(k^\alpha)^{-s}=4(2\pi)^{-s}2^{-s |\s|}\z(\alpha s)$ and for $\re s>\dH$ we have
\begin{align*}
\tr(|D|^{-s})
&=\sum_\s \tr(|D_\s|^{-s})=4(2\pi)^{-s}\z(\alpha s)\sum_\s 2^{-s |\s|}\\
&=4(2\pi)^{-s}\z(\alpha s)\sum_{n\geq0}2^{-s n}\sum_{|\s|=n}1\\
&=4(2\pi)^{-s}\z(\alpha s)\sum_{n\geq0}\left(3\cdot 2^{-s}\right)^n
=4(2\pi)^{-s}\z(\alpha s)(1-3\cdot 2^{-s})^{-1}\, .
\end{align*}
As the Riemann zeta function has just one pole at $s=1$ we have $\mathcal{S}_{\it dim} = \{ \alpha^{-1}\}\cup \big\{ \dH \big(1+\frac{2\pi i}{\log 3}k \big): k\in \mathbb{Z} \big\}\subset \mathbb{C}$.
Now we assume that  $\a> \frac{\log 2}{\log 3}$, i.e. $\dD=\frac{\log3}{\log 2}$, and prove that the volume measure is a multiple of the Hausdorff measure $H_{\dH}$.

Clearly, the functional $\vol(f)=tr_\omega(f|D|^{-\dH})$ makes sense also for bounded Borel functions on $K$, and we recall that the logarithmic Dixmier trace may be calculated as a residue (cf. \cite{Co}): $tr_\omega(f|D|^{-\dH})={\rm Res}_{s=\dH}\ \tr(f|D|^{-s})$, when the latter exists. Then, for any word $\t$,
\begin{align*}
tr_\omega (\chi_{C_\t}|D|^{-\dH})
&={\rm Res}_{s=\dH}\ \tr(\chi_{C_\t}|D|^{-s})\\
&=\lim_{s\to \dH^+}(s-\dH)\ \tr(\chi_{C_\t}|D|^{-s})\\
&=\lim_{s\to \dH^+}(s-\dH)\sum_\s \tr(\chi_{C_\t}\circ w_\s|D_\s|^{-s}),
\end{align*}
and we note that $\chi_{C_\t}\circ w_\s$ is not zero either when $\s<\t$ or when $\s\geq \t$. In the latter case, $\chi_{C_\t}\circ w_\s=1$.
Since  $\dH>1$, $\tr(\chi_{C_\t}|D_\s|^{-s})\leq \tr(|D_\s|^{-s})=4(2\pi)^{-s}2^{-s |\s|}\z(\alpha s)\to 4(2\pi)^{-\dH}3^{-|\s|}\z(\alpha \dH)$ when $s\to \dH^+$, hence
 $\displaystyle\lim_{s\to \dH^+}(s-\dH)\tr(\chi_{C_\t}|D_\s|^{-s})=0$. Therefore we may forget about the finitely many $\s<\t$, and get
\begin{align*}
tr_\omega (\chi_{C_\t}|D|^{-\dH})
&=\lim_{s\to \dH^+}(s-\dH)\sum_{\s\geq\t} \tr(|D_\s|^{-s})\\
&=\lim_{s\to \dH^+}(s-\dH)4(2\pi)^{-s}\z(\a s)\sum_\s 2^{-s(|\s|+|\t|)}\\
&=4\frac{\z(\a \dH)}{(2\pi)^{\dH}}2^{-\dH |\t|}\lim_{s\to \dH^+}\frac{s-\dH}{1-3\cdot 2^{-s}}\\
&=\frac{4\dH}{\log 3 }\frac{\z(\alpha \dH)}{(2\pi)^{\dH}}\left(\frac13\right)^{|\t|}
=\frac{4\dH}{\log 3 }\frac{\z(\alpha \dH)}{(2\pi)^{\dH}}H_{\dH}(C_\t)\, .
\end{align*}
This implies that for any $f\in\cc(K)$ for which $f\leq\chi_{C_\t}$, $\ds \vol(f)\leq \frac{4\dH}{\log 3 }\frac{\z(\alpha \dH)}{(2\pi)^{\dH}}\left(\frac13\right)^{|\t|}$, therefore points have zero volume, and $\vol(\chi_{\dot{C}_\t})=\vol(\chi_{C_\t})$, where $\dot{C}_\t$ denotes the interior of $C_\t$.
As a consequence, for the simple functions given by finite linear combinations of characteristic  functions of cells or vertices, $\ds \vol(\f)=\frac{4\dH}{\log 3 }\frac{\z(\alpha \dH)}{(2\pi)^{\dH}}\int\f\,d H_{\dH}$.
Since continuous function are Riemann integrable w.r.t. such simple functions, the thesis follows.
\end{proof}

\begin{rem}
In this case the functional $f\to\oint f$ does not reproduce the Hausdorff measure outside the algebra of continuous functions. Indeed such functional only depends on the behavior of $f$ on the union of all lacunas, a set which is negligible w.r.t. the Hausdorff measure.
\end{rem}

\subsection{The commutator condition and Connes metric}

In this section we will show that for $\a\in(0,1]$ the triple $(\ca,\ch,D)$ considered above is a spectral triple in the sense of Connes \cite{Co}, up to the infinite dimensionality of $\ker(D)$. Moreover, the commutator $\|[D,f]\|$ gives a Lip-norm in the sense of Rieffel \cite{Rief}. Such condition for spectral triples has been recently considered in \cite{BMR}, where these triples are called spectral metric spaces.

\begin{dfn}
We shall consider the following seminorms on functions defined on lacunas $\ell_\s$:
$$
L_{\s,\eta}(f)=\|f\|_{C^{0,\eta}(\ell_\s)}2^{|\s|(1-\eta)}
$$
\end{dfn}

\begin{prop}\label{prop:UppEst}
Let $\a\in(0,1]$. If  $f\in C^{0,1}(K)$, $p_\a$ is defined in \eqref{p-alphaseminorm}, and $c_\eps$ is given in Proposition \ref{Prop:Holder}, then
\begin{align*}
\|[D,f]\|=\sup_{\s\in\Sigma}2^{ |\s|} p_\a(f\circ w_\s)
\leq c_{1-\a}\ \sup_{\s\in\Sigma}L_{\s,1}(f)\leq c_{1-\a}\|f\|_{C^{0,1}(K)}.
\end{align*}
\end{prop}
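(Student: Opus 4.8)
The plan is to reduce the whole estimate to the circle calculation of Theorem \ref{tripleOnS1} together with the self-similar scaling built into the definition of $D$, and then to keep track of how the seminorms transform under the contractions $w_\s$. The three assertions (the equality, the first inequality, the second inequality) are handled in turn.

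First I would establish the equality $\|[D,f]\| = \sup_{\s\in\Sigma}2^{\b|\s|}p_\a(f\circ w_\s)$. Since $\ch = \oplus_\s\ch_\s$, $D = \oplus_\s D_\s$ and $\pi = \oplus_\s\pi_\s$ are all block-diagonal, the commutator splits as $[D,\pi(f)] = \oplus_\s[D_\s,\pi_\s(f)]$, and the operator norm of a block-diagonal operator is the supremum of the block norms. For each block one has $D_\s = 2^{\b|\s|}D_\emptyset$ and $\pi_\s(f) = \pi_\emptyset(f\circ w_\s)$, whence $[D_\s,\pi_\s(f)] = 2^{\b|\s|}[D_\emptyset,\pi_\emptyset(f\circ w_\s)]$. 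Because $D_\emptyset$ is exactly the circle operator $D_\a$ under the isometric identification $\ell_\emptyset\cong\T$, and $\pi_\emptyset$ is the left action by restriction to $\ell_\emptyset$, the norm-equality $\|[D_\a,L_g]\| = p_\a(g)$ recorded in the proof of Theorem~\ref{tripleOnS1} (see \eqref{commutator} and \eqref{norm-equality}) gives $\|[D_\s,\pi_\s(f)]\| = 2^{\b|\s|}p_\a\big((f\circ w_\s)|_{\ell_\emptyset}\big)$, which is the stated equality (the restriction to $\ell_\emptyset$ being implicit in the notation $p_\a(f\circ w_\s)$).

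Next I would prove the first inequality. The key input is Proposition \ref{Prop:Holder}, which controls $p_\a$ by the constant $c_{\eta-\a}$ times the H\"older seminorm of exponent $\eta = \a+(\eta-\a)$; the crucial feature is that this bound sees only the \emph{oscillation} of the function, since $p_\a$ vanishes on constants (if the Proposition is stated with the full norm, one first subtracts a constant, which leaves $p_\a$ unchanged and, on the fixed-diameter set $\ell_\emptyset$, bounds the sup-part by the seminorm up to a universal factor absorbed in $c_{\eta-\a}$). Applying this on $\ell_\emptyset\cong\T$ to $g=(f\circ w_\s)|_{\ell_\emptyset}$ and using that $w_\s|_{\ell_\emptyset}$ is a similarity of ratio $2^{-|\s|}$ onto $\ell_\s$, the H\"older seminorm transforms as $[f\circ w_\s]_{C^{0,\eta}(\ell_\emptyset)} = 2^{-\eta|\s|}[f]_{C^{0,\eta}(\ell_\s)}$. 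Multiplying by $2^{\b|\s|}$ then yields
\[
2^{\b|\s|}p_\a(f\circ w_\s) \leq c_{\eta-\a}\,2^{(\b-\eta)|\s|}[f]_{C^{0,\eta}(\ell_\s)} \leq c_{\eta-\a}\,2^{(\b-\eta)|\s|}\|f\|_{C^{0,\eta}(\ell_\s)} = c_{\eta-\a}L_{\s\eta}(f),
\]
and taking the supremum over $\s$ gives the first inequality. The second inequality is then elementary: since $\b\leq\eta$ we have $2^{(\b-\eta)|\s|}\leq 1$ for every $\s$, and since $\ell_\s\subset K$ the restricted H\"older norm obeys $\|f\|_{C^{0,\eta}(\ell_\s)}\leq\|f\|_{C^{0,\eta}(K)}$; together these give $L_{\s\eta}(f)\leq\|f\|_{C^{0,\eta}(K)}$ uniformly in $\s$.

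The main obstacle is precisely the second step, and more sharply the requirement that the Proposition \ref{Prop:Holder} estimate be read off the H\"older \emph{seminorm} rather than the full norm. Were the sup-norm to enter genuinely, the unbounded factor $2^{\b|\s|}$ (with $\b>0$) would make the supremum over $\s$ diverge and no finite Lip-type norm would result; it is exactly the cancellation of the contraction factor $2^{-\eta|\s|}$ from the scaling of $w_\s$ against $2^{\b|\s|}$, under the hypothesis $\b\leq\eta$, that keeps the family of block norms uniformly bounded. A secondary point of care is that the isometric identification $\ell_\emptyset\cong\T$ and the H\"older norm on $\ell_\s$ must refer to a fixed metric on $K$; since the contractions scale Euclidean and arc-length distances identically, this choice affects only the constants and not the stated scaling in $|\s|$.
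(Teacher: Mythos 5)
Your proof is correct and matches the paper's own argument essentially step for step: the block-diagonal reduction $\|[D,f]\|=\sup_{\s}\|[D_\s,\pi_\s(f)]\|$, the scaling $D_\s=2^{\b|\s|}D_\emptyset$ combined with the identity $\|[D_\a,L_g]\|=\|S_g\|=p_\a(g)$ from the proof of Theorem \ref{tripleOnS1}, the bound $p_\a\leq c_{\eta-\a}\,\|\cdot\|_{0,\eta}$ from Proposition \ref{Prop:Holder}, and the H\"older-seminorm scaling under the similarity $w_\s$ together with $\b\leq\eta$ for the final estimate. Your side remark that Proposition \ref{Prop:Holder} must be read with the H\"older \emph{seminorm} is a correct and useful clarification (and is indeed how the paper defines $\|f\|_{0,\a}$ there), but it is not a deviation from the paper's route.
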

\begin{proof}
By equation \eqref{norm-equality} and Proposition \ref{Prop:Holder}
\begin{align*}
\|[D,f]\|
&=\|\bigoplus_{\s\in\Sigma}[D_\s,\pi_\s(f)]\|=\sup_{\s\in\Sigma}\|[D_\s,\pi_\s(f)]\|\\
&=\sup_{\s\in\Sigma} 2^{ |\s|} \|[D_\a,\pi_\emptyset(f\circ w_\sigma)]\|
=\sup_{\s\in\Sigma} 2^{ |\s|} \|S_{f\circ w_\sigma}\|
=\sup_{\s\in\Sigma} 2^{ |\s|} p_\a(f\circ w_\s)\\
&\leq c_{1-\a}\ \sup_{\s\in\Sigma} 2^{ |\s|} \|f\circ w_\s\|_{C^{0,1}(\ell_\emptyset)}
= c_{1-\a}\ \sup_{\s\in\Sigma}\|f\|_{C^{0,1}(\ell_\s)} .
\end{align*}
\end{proof}

The previous Proposition gives an estimate from above of the norm of the commutator. However, by making use of Lemma \ref{Clausen-estimate}, we may get an estimate from below.

\begin{lem}\label{lem:LowEst}
Let $\a\in[\frac12,1]$, and $\tilde{c}_\a$ be as in Proposition \ref{Prop:Holder}. Then
\begin{align*}
\|[D,f]\| & \geq \tilde{c}_\a\, \sup_{\s\in\Sigma}L_{\s,\a}(f).
\end{align*}
\end{lem}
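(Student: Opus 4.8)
The plan is to read off the norm of the commutator exactly as in the proof of Proposition \ref{prop:UppEst} and then bound it from below. Since $D=\oplus_{\s\in\Sigma}D_\s$ with $D_\s=2^{\b|\s|}D_\emptyset$ and $\pi_\s(f)=\pi_\emptyset(f\circ w_\s)$, the block structure of $[D,f]$ together with the identity $\|[D_\a,\pi_\emptyset(g)]\|=\|S_g\|=p_\a(g)$ from Theorem \ref{tripleOnS1} (see \eqref{norm-equality}) gives
$$\|[D,f]\|=\sup_{\s\in\Sigma}\|[D_\s,\pi_\s(f)]\|=\sup_{\s\in\Sigma}2^{\b|\s|}\,p_\a(f\circ w_\s).$$
Thus the whole problem reduces to bounding $p_\a(f\circ w_\s)$ from below and tracking the scaling in $\s$.

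For the lower bound on $p_\a$ I would invoke the estimate of Proposition \ref{Prop:Holder}, which is the exact companion of the upper bound $p_\a\leq c_{\eta-\a}\|\cdot\|_{C^{0,\eta}}$ already used in Proposition \ref{prop:UppEst}: it asserts $p_\a(g)\geq \tilde c_\a\,\|g\|_{C^{0,\a}(\ell_\emptyset)}$, where $\|\cdot\|_{C^{0,\a}}$ denotes the H\"older seminorm of order $\a$. This is the step where the Clausen estimates of Lemma \ref{Clausen-estimate} enter: the kernel $\f_\a=-2\pi\Ci_{-2\a}$ behaves like $|x-y|^{-1-2\a}$ near the diagonal, so that its local singularity forces the quadratic form defining $p_\a$ to control the H\"older-$\a$ ratio of $g$. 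The second ingredient is purely geometric: since $w_\s$ is a similarity of ratio $2^{-|\s|}$ from $\ell_\emptyset$ onto $\ell_\s$, the H\"older-$\a$ seminorm rescales as
$$\|f\circ w_\s\|_{C^{0,\a}(\ell_\emptyset)}=2^{-|\s|\a}\,\|f\|_{C^{0,\a}(\ell_\s)}.$$

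Combining the three facts yields the claim at once:
$$\|[D,f]\|=\sup_{\s\in\Sigma}2^{\b|\s|}p_\a(f\circ w_\s)\geq \tilde c_\a\sup_{\s\in\Sigma}2^{\b|\s|}\|f\circ w_\s\|_{C^{0,\a}(\ell_\emptyset)}=\tilde c_\a\sup_{\s\in\Sigma}2^{|\s|(\b-\a)}\|f\|_{C^{0,\a}(\ell_\s)}=\tilde c_\a\sup_{\s\in\Sigma}L_{\s\a}(f).$$
The scaling computation and the reduction to a single lacuna are immediate once the commutator identity is in hand; the genuine content — and the only real obstacle — is hidden in Proposition \ref{Prop:Holder}, namely the lower estimate $p_\a(g)\geq \tilde c_\a\|g\|_{C^{0,\a}}$, which rests on the sharp behavior of $\Ci_{-2\a}$ near the origin supplied by Lemma \ref{Clausen-estimate}. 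One should also keep in mind that this lower estimate is available precisely in the range of $\a$ for which $\tilde c_\a$ is defined (as in Corollary \ref{lem:distS1}, $\a\geq\frac12$), so the statement is implicitly understood in that regime.
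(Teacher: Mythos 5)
Your proposal is correct and follows exactly the paper's own argument: reduce $\|[D,f]\|$ to $\sup_{\s}2^{\b|\s|}p_\a(f\circ w_\s)$ via the block structure and \eqref{norm-equality}, apply the lower bound $p_\a(g)\geq\tilde c_\a\|g\|_{C^{0,\a}(\ell_\emptyset)}$ of Proposition \ref{Prop:Holder}~$(ii)$, and use the similarity scaling $\|f\circ w_\s\|_{C^{0,\a}(\ell_\emptyset)}=2^{-|\s|\a}\|f\|_{C^{0,\a}(\ell_\s)}$ to recover $L_{\s\a}(f)$. Your closing remark about the implicit restriction $\a\geq\frac12$ (where $\tilde c_\a$ is defined) is also consistent with the paper's usage.
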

\begin{proof}
\begin{align*}
\|[D,f]\| & = \sup_{\s\in\Sigma} 2^{ |\s|} p_\a(f\circ w_\s)
\geq \tilde{c}_\a\, \sup_{\s\in\Sigma} 2^{ |\s|} \|f\circ w_\s\|_{C^{0,\a}(\ell_\emptyset)}
= \tilde{c}_\a\, \sup_{\s\in\Sigma}\|f\|_{C^{0,\a}(\ell_\s)} 2^{|\s|(1-\a)}.
\end{align*}
\end{proof}

\begin{prop}\label{prop:LowEst}
Let $\a\in[\frac12,1]$. There exists a constant $k(\a)$ such that
\begin{equation}
\|f\|_{C^{0,1}(K)}\leq k(\a)\|[D,f]\|.
\end{equation}
\end{prop}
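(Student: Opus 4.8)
The plan is to derive the estimate from Lemma~\ref{lem:LowEst}, which controls $f$ on each individual lacuna, by propagating that control to all of $K$ through a multiscale chaining over the cells $C_\s$. Write $M:=\|[D,f]\|$. Lemma~\ref{lem:LowEst} gives $\|f\|_{C^{0,\a}(\ell_\s)}\le \tilde c_\a^{-1}M\,2^{|\s|(\a-\b)}$ for every $\s\in\Sigma$; since $\diam\ell_\s\approx 2^{-|\s|}$, this yields the \emph{scale--adapted} bound
\[
|f(u)-f(v)|\le \tilde c_\a^{-1}M\,2^{-|\s|\b}\qquad u,v\in\ell_\s .
\]
In other words the sought $\b$-Hölder estimate already holds for any pair of points lying on a common lacuna, \emph{at the scale of that lacuna}; the whole difficulty is to pass from the lacunas to arbitrary points of $K$.

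Since $V_*=\cup_m V_m$ is dense in $K$, $f$ is continuous, the Euclidean and geodesic metrics of $K$ are comparable and $\diam C_\s\approx 2^{-|\s|}$, it suffices to prove the oscillation bound $\osc(f,C_\s):=\sup_{x,y\in C_\s}|f(x)-f(y)|\le C(\a,\b)\,M\,2^{-|\s|\b}$ for all $\s$, which is equivalent to the control of the $\b$-Hölder seminorm $[f]_{C^{0,\b}(K)}$ (the sup-norm part being immaterial, as the commutator annihilates constants). Moreover, since $\osc(f,C_\s)=\osc(f\circ w_\s,K)$ and $\|[D,f\circ w_\s]\|\le 2^{-\b|\s|}\|[D,f]\|$, by self-similarity it is enough to establish the single estimate $\osc(f,K)\le C(\a,\b)\,M$. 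Given $x,y\in K$ I would let $\tau$ be the shortest word with $x,y\in C_\tau$; then $x\in C_{\tau a}$, $y\in C_{\tau b}$ with $a\ne b$, the two subcells meet exactly at $w_\tau(m_{ab})$, a vertex of $\ell_\tau$, and telescoping $x\to w_\tau(m_{ab})\to y$ (then iterating inside $C_{\tau a}$ and $C_{\tau b}$) reduces everything to the basic step of bounding $|f(z)-f(w)|$ for $z\in K$ and $w$ a vertex of a cell $C_\rho\ni z$.

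This basic step is carried out by descending through the nested cells $C_\rho\supset C_{\rho c_1}\supset C_{\rho c_1c_2}\supset\cdots$ shrinking to $z$ and summing the increments produced at each level. At level $k$ the relevant increment is an $f$-difference between two vertices of a cell of size $\approx 2^{-(|\rho|+k)}$: whenever these two vertices lie on a common lacuna it is $\le \tilde c_\a^{-1}M\,2^{-(|\rho|+k)\b}$ by the scale-adapted bound, and the series $\sum_{k\ge0}2^{-(|\rho|+k)\b}=2^{-|\rho|\b}/(1-2^{-\b})$ converges because $\b>0$, producing exactly $\osc(f,C_\rho)\le C\,M\,2^{-|\rho|\b}$.

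The main obstacle is that consecutive vertices in this descent need \emph{not} lie on a common lacuna. Indeed the vertices of $V_0$, and by self-similarity the persistent corners $w_\rho(p_i)$ and the points of the edges $E_m$, lie on \emph{no} lacuna, and a direct chain toward such a point produces ``adjacent'' vertex pairs — a corner and the midpoint of an incident edge — which are not controlled by any single $\ell_\s$. The way around this, and the place where the hypothesis $\b>0$ and the weight $2^{|\s|(\b-\a)}$ in $L_{\s\a}$ are genuinely used, is that such a corner is surrounded by the finer lacunas $\ell_{\rho i},\ell_{\rho ii},\dots$ clustering around it, whose three vertices straddle it transversally; their scale-adapted bounds detect the local variation of $f$ near the corner, and summing them (again a convergent geometric series in $2^{-\b}$) bounds the offending adjacent-pair increments by $\le C\,M\,2^{-(|\rho|+k)\b}$. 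Making this transverse summation precise — choosing the chain so that, after absorbing the corner increments into the convergent tail, only lacuna-controlled differences at scales $\gtrsim|x-y|$ survive — is the technical heart of the argument; once it is in place, assembling the per-scale estimates yields the oscillation bound on every cell and hence $\|f\|_{C^{0,\b}(K)}\le k(\a,\b)\,\|[D,f]\|$.
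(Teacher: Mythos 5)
You have assembled the right ingredients --- the scale-adapted increment bound $|f(u)-f(v)|\le \const\,\tilde c_\a^{-1}\|[D,f]\|\,2^{-|\s|\b}$ for $u,v\in\ell_\s$ coming from Lemma~\ref{lem:LowEst}, the geometric series in $2^{-\b}$, and a legitimate self-similar reduction (the inequality $\|[D,f\circ w_\s]\|\le 2^{-\b|\s|}\|[D,f]\|$ is correct) --- and you correctly diagnose the obstruction: a chain through cell corners produces increments, such as a corner paired with the midpoint of an incident edge, that no single lacuna controls. But your argument stops exactly there: the ``transverse summation'' that is supposed to absorb these increments is deferred as ``the technical heart'', so the decisive step is missing. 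This is a genuine gap, not a formality: after your self-similar reduction, the basic step amounts to joining an arbitrary point of $K$ to one of the global corners $p_i$, and these are precisely the points lying on no lacuna whatsoever, so the entire difficulty of the Proposition is concentrated in the step you leave open. (A side remark: your list of bad points overstates the problem --- every vertex of $V_*\setminus V_0$ \emph{is} a lacuna vertex, being a midpoint of an edge of the previous-level cell; what genuinely fails is that certain \emph{pairs} of nearby vertices share no lacuna.)

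The paper closes this gap with one clean geometric observation that eliminates vertex-to-vertex increments altogether: if $C_{\s'}\subset C_\s$ are cells of consecutive levels, then $\ell_\s\cap\ell_{\s'}$ consists of exactly one point. Given $x\in C_\s$ of level $m$, one takes the nested cells $C_{\s(j,x)}$ shrinking to $x$ and the chain points $x_j\in\ell_{\s(j-1,x)}\cap\ell_{\s(j,x)}$, with $x_1$ a vertex of $\ell_\s$ lying in the subcell of $x$; then $x_j,x_{j+1}$ lie on the common lacuna $\ell_{\s(j,x)}$ \emph{by construction}, each increment is lacuna-controlled, and summing gives $|f(x_1)-f(x)|\le \tilde c_\a^{-1}2^{-\a}(1-2^{-\b})^{-1}\|[D,f]\|\,2^{-m\b}$ uniformly in $x$ --- corners included, since for a corner the touching lacunas of the nested corner cells are exactly your ``clustering lacunas'', entering here through a uniform construction rather than a separate patch. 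To compare two arbitrary points the paper then takes a minimal butterfly neighborhood $W(x_0,m)$ (the union of the two level-$m$ cells sharing a vertex $x_0$) containing $x$ and $y$: minimality yields $\r_{geo}(x,y)\ge 2^{-(m+1)}$, which supplies the scale comparison, and inside the wing containing $x$ one applies the chain \emph{twice}, to $x$ and to $x_0$, arranging both chains to end at the same vertex $x_1$ of the wing's lacuna (the corner shared by the two relevant subcells); this plays the role of your minimal-common-cell telescoping. So your plan follows the same strategy as the paper, but completing it requires discovering the intersection-of-nested-lacunas chain: that construction is, in effect, the whole proof.
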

\begin{proof}
Our aim  is to estimate $|f(x)-f(y)|$ for a continuous function $f$ for which $\|[D,f]\|<\infty$.
\\
1$^{st}$ step. Let $C_\s$ be a  cell of level $m$, $x$ a point in $C_\s$. We now construct inductively a sequence of cells $C_{\s(j,x)}$, $j\geq1$, such that $x\in C_{\s(j,x)}$, $C_{\s(1,x)}:= C_{\s}$, $C_{\s(j+1,x)}\subset C_{\s(j,x)}$, $|\s(j,x)|=m+j-1$ (if $x$ is not a vertex such sequence is uniquely determined).
We then construct a sequence $\{x_j\}_{j\geq1}$ of points as follows: $x_1$ is a  vertex of $\ell_\s$ contained in $C_{\s(2,x)}$, $x_j$ is the unique point in $\ell_{\s(j-1,x)}\cap \ell_{\s(j,x)}$, $j>1$. By construction, $x_j\to x$ and   the points $x_j,x_{j+1}$ belong to the lacuna $\ell_{\s(j,x)}$.

We now observe that, by Lemma \ref{lem:LowEst},
\begin{align*}
|f(x_{j+1})-f(x_j)|
& \leq \|f\|_{C^{0,\a}(\ell_{\s(j,x)})}d(x_{j+1},x_j)^\a
\leq L_{\s(j,x), \a}(f) 2^{-(m+j-1)(1-\a)}(\diam(\ell_{\s(j,x)}))^\a\\
& \leq \tilde{c}_\a^{-1}2^{-\a}\|[D,f]\| 2^{-(m+j-1)}.
\end{align*}
As a consequence,
\begin{align*}
|f(x_1)-f(x)| & \leq \sum_{j\geq 1}|f(x_{j+1})-f(x_j)| \leq \tilde{c}_\a^{-1}2^{1-\a} \|[D,f]\| 2^{-m}.
\end{align*}
\\
2$^{nd}$ step. If $x_0$ is a vertex of level $n\ne0$, and $m\geq n$, the butterfly shaped neighborhood $W(x_0,m)$ is the union of the two cells of level $m$ containing $x$.
For $x,y\in K$, let $W(x_0,m)$ be a minimal butterfly shaped neighborhood containing them. Observe that, by minimality, at least one of the points, say $x$, does not belong to $W(x_0,m+1)$, hence $\r_{geo}(x,y)\geq\r_{geo}(x,x_0)\geq2^{-(m+1)}$.

Let us now choose $W(x_1,m+1)$ contained in one of the wings of  $W(x_0,m)$ and containing both $x$ and $x_0$. Reasoning as in the first step,
\begin{align*}
|f(x_0)-f(x)|\leq |f(x_0)-f(x_1)| + |f(x_1)-f(x)|
\leq 2\tilde{c}_\a^{-1}2^{1-\a} \|[D,f]\|2^{-m},
\end{align*}
hence,
\begin{align*}
|f(y)-f(x)|
& \leq |f(y)-f(x_0)| + |f(x_0)-f(x)|
\leq 4\tilde{c}_\a^{-1} 2^{1-\a} \|[D,f]\|2^{-m}\\
& \leq 8\tilde{c}_\a^{-1} 2^{1-\a} \|[D,f]\| \, \r_{geo}(x,y).
\end{align*}
The thesis follows.
\end{proof}

\begin{thm}\label{cor:triple}
For any $\a\in(0,1]$,  the algebra $\ca$ contains $\cc^{0,1}(K)$ and the triple $(\ca,\ch,D)$ is a spectral triple. Moreover, for $\a\in[\frac12,1]$, $\ca$ coincides with $\cc^{0,1}(K)$, and the seminorm $f\to\|[D,f]\|$ is a Lip-norm according to Rieffel  \cite{Rief}. In particular, the metric
$$
\r_D(x,y)=\sup_{f\in\ca}\frac{|f(x)-f(y)|}{\|[D,f]\|}
$$
 is bi-Lipschitz w.r.t. the  Euclidean geodesic metric $\r_{geo}$ on $K$.
\end{thm}
\begin{proof}
It follows from Proposition \ref{prop:UppEst} that $\ca$ is dense in $C(K)$, which, together with the results in the previous Sections, give the spectral triple property.  The Lip-norm property follows by Proposition \ref{prop:LowEst}, cf. \cite{Rief}. Indeed, it implies that functions for which $\|[D,f]\|\leq1$ are equicontinuous, which gives the compactness property of the set of elements for which  $\|[D,f]\|\leq1$ and $\|f\|\leq1$, and it also implies that $\|[D,f]\|$ vanishes only on constant functions. The equivalence of the seminorms follows from Propositions \ref{prop:UppEst} and \ref{prop:LowEst}. The other results easily follow.
\end{proof}

\subsection{The gasket in K-homology}
Let $\a\in(0,1]$, and denote by $(\ca,\ch,D)$   the spectral triple for the gasket considered above. Let $F$ be the phase of $D$, and, for any $\s\in\Sigma$, denote by $\g_\s,\eps_\s,S_\s$, a copy of the operators $\g,\eps,S$ associated, by Proposition \ref{FrModS1}, to the lacuna $\ell_\s$, identified with $\T$. Finally, let $\g=\oplus_{\s\in\Sigma} \g_\s$,  $\eps =\oplus_{\s\in\Sigma} \eps_\s$,  $S = \oplus_{\s\in\Sigma} S_\s$.

\begin{thm}
Let $\a\in(0,1]$. The quintuple $\cf = (\ch,\pi,F,\g,\eps)$ is a tamely degenerate 1-graded Fredholm module on $\ca$, as in Definition \ref{dfn:kernelDeg}.
The ungraded Fredholm module $\cf^+ = (\ch^+,\pi^+,F^+)$ associated to it by Proposition \ref{Ex8.8.4} is a tamely degenerate module on $\ca$, and $F^+= S$.
The module $\cf^+$ is non trivial in K-homology. In particular, it pairs non trivially with the generators of the (odd) K-theory of the gasket associated with the lacunas.
\end{thm}
\begin{proof}
First step. We  check the compactness of
$[\eps ,\pi(f)]=\oplus_\s[\eps_\s,\pi_\emptyset(f\circ w_\s)]$. As in the proof of Proposition \ref{FrModS1}, this amounts to prove that $\oplus_\s[S_\s,\pi_\emptyset^0(f\circ w_\s)]$ is compact, where $\pi_\emptyset^i$ denotes the action of $C(\ell_\emptyset)$ on $L^2(\Omega^i(\ell_\emptyset))$, $i=0,1$. Even though each summand is compact, the compactness of the direct sum is not obvious.

We first consider an affine function $f$ in the plane, restricted to the gasket, and observe that consequently $f\circ w_\s |_{\ell_\emptyset}=\const+2^{-|\s|}f |_{\ell_\emptyset}$. Let us denote by $\{s_n\}$ the sequence of the singular values with multiplicity, arranged in a non increasing order, of $[S_\emptyset,\pi_\emptyset^0(f)]$. Then, for any given $\s$,
$$
[S_\s,\pi_\emptyset^0(f\circ w_\s)]
=2^{-|\s|}[S_\emptyset,\pi_\emptyset^0(f)],
$$
namely the sequence of singular values for $\oplus_\s[S_\s,\pi_\emptyset(f\circ w_\s)]$ is $\{2^{-|\s|}s_n:\s\in\Sigma,n\in\bn\}$, showing that $[\eps,\pi(f)]$ is compact.

Now, for any given $n\in\bn$, consider the piece-wise affine functions $\Aff_n(K)$ on the gasket, which are affine when restricted to cells of level $n$. Reasoning as above, we obtain that, for $|\s|=n$, the operator $\oplus_{\t\geq \s}[\eps_\s,\pi_\emptyset(f\circ w_\t)]$ is compact, from which the compactness of $[\eps,\pi(f)]$ follows again. Since $\cup_n \Aff_n(K)$ is dense in $\ca$, the thesis is proved. The other properties being obvious, we have proved that $\cf$ is a kernel-degenerate $1$-graded Fredholm module. Therefore, by Proposition \ref{Ex8.8.4}, $\cf$  is a kernel-degenerate ungraded Fredholm module.

Second step. According to Proposition \ref{Ex8.8.4}, it is sufficient to prove the tame degeneracy of the ungraded Fredholm module. Since  $K^1(K)$ is a direct sum of countably many copies of $\bz$ it is sufficient to verify the  equation \eqref{tameF} only for the generators, namely for the unitaries $u_\s$ having winding number 1 around $\ell_\s$ and winding number 0 around all other lacunas. However, for the unitary $u_\s$, the global index in \eqref{tameF} is equal to the index on the lacuna $\ell_\s$, which is clearly trivial. Tameness follows.
\end{proof}

\subsection{The Dirichlet form}

Let us recall that the integral $\oint a$ of an element $a\in\ca$ in noncommutative geometry is defined as the Dixmier trace $\tr_\o(a|D|^{d})$, where $d$ is the metric dimension of the triple. Such integral  may be computed, for a  positive bounded $a$, in two equivalent ways:

\begin{align}
%& \lim_n \frac{\s_n(|D|^{-d/2}a|D|^{-d/2})}{\log n};\label{formulaV1}\\
&\lim_{s\to 1} (s-1) \tr \left( (|D|^{-d/2}a|D|^{-d/2})^s\right);\label{formulaV2}\\
&\lim_{s\to 1} (s-1) \tr (a|D|^{-sd})=d^{-1} \lim_{t\to d}(t-d) \tr (a|D|^{-t});\label{formulaV3}
\end{align}
when such limits exist, cf. \cite{Co} Proposition 4 p.306, and  \cite{CPS} Corollary 3.7 (in this case the noncommutative integral is independent of the choice of the ultrafilter $\omega$ on $\mathbb{N}$).

However, things change when we consider unbounded $a$'s. First of all, we replace $a|D|^{-sd}$ with $|D|^{-sd/2}a|D|^{-sd/2}$ in such a way that the trace is well defined (possibly infinite). Moreover, while the boundedness of $(s-1) \tr\left( (|D|^{-d/2}a|D|^{-d/2})^s\right)$ for $s>1$ is equivalent to $|D|^{-d/2}a|D|^{-d/2}\in\cl^{1,\infty}$ (cf. \cite{CRSS} Thm. 4.5), the boundedness of  $(s-1) \tr (|D|^{-sd/2}a|D|^{-sd/2})$ for $s>1$ is in general a weaker condition  (cf. Lemma \ref{HalfMiRoTrain}). Indeed, when  classical $d$-manifolds $M$ are concerned, with $|D|=\Delta^{1/2}$,  \cite{LPS} shows that the residue at $1$ of $(s-1) \tr (|D|^{-sd/2}f|D|^{-sd/2})$ is finite and gives the integral of $f$ on $M$ w.r.t. the volume form (up to a multiplicative constant) for any function $f\in L^1 (M)$, that the same is true for the residue at $1$ of $(s-1)  \tr\left( (|D|^{-d/2}a|D|^{-d/2})^s\right)$ only when $f\in L^{1+\eps} (M)$, and examples are given of $f\in L^1 (M)$ such that $|D|^{-d/2}a|D|^{-d/2}$ does not belong to $\cl^{1,\infty}$.

\bigskip

Our aim here is to use the NCG translation table to define a Dirichlet energy on spectral triples, cf. also \cite{PeBe,JuSa}. Starting with the classical Dirichlet integral $f\mapsto\int_M|\nabla f|^2 \, d{\rm vol}$ on a Riemannian manifold $M$, we replace the gradient $\nabla f$ with the commutator $[D,f]$, the integral as explained above, and get the nonnegative quadratic functional
\begin{equation}\label{formulaE2}
a\mapsto\ce_D[a]:=\tr_\o\left(|D|^{-\d/2}|[D,a]|^2|D|^{-\d/2}\right),
\end{equation}
for a suitable $\d$. As above, we may hope to compute the energy also as
\begin{align}
%& \lim_n \frac{\s_n(|D|^{\d/2}|[D,a]|^2|D|^{\d/2})}{\log n};\label{formulaE1}\\
%&\lim_{s\to 1} (s-1) \tr (|D|^{\d/2}|[D,a]|^2|D|^{\d/2})^s;\label{formulaE2}\\
&\lim_{s\to 1} (s-1) \tr (|D|^{s\d/2}|[D,a]|^2|D|^{s\d/2})=\d^{-1}\lim_{t\to \d} (t-\d) \tr (|D|^{t/2}|[D,a]|^2|D|^{t/2}).\label{formulaE3}
\end{align}
For classical Riemannian $d$-manifolds $M$, the energy is finite for functions in the Sobolev space  $f\in H^1 (M)$, namely $|\nabla f|^2\in L^1 (M)$, therefore   the analysis in \cite{LPS} (for $\d=d$) shows that formula \eqref{formulaE3} is finite and  recovers a multiple of the Dirichlet energy form for all $f\in H^1 (M)$, while formula \eqref{formulaE2}   recovers a multiple of  the Dirichlet energy form only for functions in a proper subset.

This will be the case also in our present setting (even though we could not produce a counterexample) where the manifold is replaced by the Sierpinski gasket and the Dirichlet integral by the standard Dirichlet form. In particular, we prove that formula \eqref{formulaE3} is finite and  recovers a multiple of the standard Dirichlet form for all finite energy functions, while we are able to prove that formula \eqref{formulaE2}   recovers a multiple of  the standard Dirichlet form only on a form core.

Moreover, as a counterpart of the results of Kusuoka \cite{Ku}, Ben-Bassat-Strichartz-Teplyaev \cite{BST} (see also \cite{Hino,HiNa})  showing that self-similar measures and energy measures are singular on the gasket, the exponent $\d$ in \eqref{formulaE2}, \eqref{formulaE3}, which we call {\it energy dimension}, is smaller than the volume dimension $d$. As a consequence,  we cannot exclude that the multiplicative constant  relating $\ce_D[a]$ in formula \eqref{formulaE2} with the standard Dirichlet form may even depend on the generalized limit $\o$.

Let us remark here that we prove that the forms described in  \eqref{formulaE2} and  \eqref{formulaE3} coincide (up to a constant)  with the standard energy form on the gasket on suitable domains. The question whether  such formulas directly give a Dirichlet form, either for general spectral triples or for the case of fractals, is not treated here, and will be the subject of future research.

Also, the residue and the Dixmier trace formulas, which are essentially equivalent for the case of the volume, are proved here in a quite independent way, the second requiring rather technical results on singular traces; this is why the domains of their validity are different, and the two constants giving the relation with the standard energy are unrelated, only estimates being available.

\bigskip

Now we prove that formula \eqref{formulaE3} reproduces a multiple of the standard Dirichlet form on the gasket. In the following Theorem, a result of Jonsson \cite{Jo} on the regularity of the trace of a finite energy function on an edge of the gasket, will imply that the standard Dirichlet form on the gasket can be recovered via the spectral triple only if $\a$ is not too close to 1.
In this section, when $f$ is a continuous function on the gasket, $\ce[f]$
denotes the values of the (possibly infinite) standard Dirichlet form on $f$.
Let us first observe that
\begin{align}\label{trace1}
\tr(|D|^{-s/2}|[D,f]|^2|D|^{-s/2})
& =\sum_{\s}
\tr(|D_\s|^{-s/2}|[D_\s,\pi_\s(f)]|^2|D_\s|^{-s/2})\\
\nonumber & =\sum_{\s}
2^{(2-s)|\s|}
\tr(|D_\emptyset|^{-s/2}|[D_\emptyset,\pi_\emptyset(f\circ w_\s)]|^2|D_\emptyset|^{-s/2}).
\end{align}
However, the following holds.

\begin{lem}\label{traceClass}
Let $s>\frac1\a$, $\a_0=\frac{\log(10/3)}{\log4}\approx 0.87$. Then:
\begin{itemize}
\item[$(i)$]  $\tr(|D_\emptyset|^{-s/2}|[D_\emptyset,g]|^2|D_\emptyset|^{-s/2})$ is finite if and only if $g\in H^\a(\ell_\emptyset)$.
\item[$(ii)$]  $\tr(|D|^{-s/2}|[D,f]|^2|D|^{-s/2})<\infty$, $\forall f$ with  finite energy  on $K$ $\Rightarrow$ $\a\leq\a_0$.
\end{itemize}
\end{lem}
\begin{proof}
$(i)$ By \eqref{commutator}, and the definition of $D_\emptyset$, we get
\begin{multline}\label{trace2}
\tr (|D_\emptyset|^{-s/2}|[D_\emptyset,\pi_\emptyset(g)]|^2|D_\emptyset|^{-s/2})
=\tr\left(|D_\emptyset|^{-s/2}
\begin{pmatrix}S_{g}S^*_{g}&0\\
0&S^*_{g^*}S_{g^*}\end{pmatrix}
 |D_\emptyset|^{-s/2}\right)\\
=
\tr((\partial_\a^* \partial_\a)^{-s/4}S^*_{g^*}S_{g^*}(\partial_\a^* \partial_\a)^{-s/4})
+
\tr((\partial_\a \partial_\a^*)^{-s/4}S_{g}S^*_{g}(\partial_\a \partial_\a^*)^{-s/4}).
\end{multline}
As a consequence, Lemma \ref{lem:technical} implies
\begin{equation}\label{doubleEstimate}
2\z(\a s)\|\partial_\a g\|_{L^2(\ell_\emptyset \times \ell_\emptyset)}^2
\leq
\tr (|D_\emptyset|^{-s/2}|[D_\emptyset,\pi_\emptyset(g)]|^2|D_\emptyset|^{-s/2})
\leq
4 \z(\a s)\|\partial_\a g\|_{L^2(\ell_\emptyset \times \ell_\emptyset)}^2\,.
\end{equation}
$(ii)$ Let $\a>\a_0$, $g\in H^{\a_0}(\ell_\emptyset)\setminus H^{\a}(\ell_\emptyset)$. By \cite{Jo} Thm. 5.1, there exists a finite energy function $f$ on $K$ such that $f|_{\ell_\emptyset}=g$. By $(i)$, $\tr(|D_\emptyset|^{-s/2}|[D_\emptyset,\pi_\emptyset(f)]|^2|D_\emptyset|^{-s/2})$ is infinite, and the thesis follows.
\end{proof}

Next Theorem shows that condition $\a\leq\a_0$, together with $\a>\frac{\log 2}{\log 12/5}$, is also sufficient for the recovery of the energy via formula \eqref{formulaE3}.

\begin{thm}\label{thm:res=en}
Let $\a\in(0,\a_0]$, $\deD =\max \{ \a^{-1},\dE \}$, with $\dE:=\frac{\log 12/5}{\log 2} \approx 1.26$. Then:
\begin{itemize}
\item[$(i)$]
For any $f$ with finite energy, $s>\d_D$, $|D|^{-s/2}|[D,f]|^2|D|^{-s/2}$ is a trace class operator.
\item[$(ii)$]
For $\a\in (\dE^{-1}, \a_0]$, so that $\deD =\dE$, and $f$ with finite energy,  the functional
$$Z_{D,f}(s):=\tr(|D|^{-s/2}|[D,\pi(f)]|^2|D|^{-s/2}),$$
defined for $\Re s >\dE$, has abscissa of convergence $\dE$, where it has a simple pole,  and there exists a constant $A$ such that
\begin{equation}\label{energy1}
\Res_{s=\dE}Z_{D,f}(s) =A\ \ce[f].
\end{equation}
\end{itemize}
\end{thm}

\begin{proof}
$(i)$ According to formulas \eqref{trace1}, \eqref{trace2}, \eqref{doubleEstimate},
\begin{equation}\label{traceineq1}
\tr(|D|^{-s/2}|[D,\pi(f)]|^2|D|^{-s/2})
\leq 4\z(\a s) \sum_{\s}
2^{(2-s)|\s|}\|\partial_\a \pi_\emptyset(f\circ w_\s)\|_{L^2(\ell_\emptyset \times \ell_\emptyset)}^2\,.
%\| f\circ w_\s\|_{H^\a(\ell_\emptyset)}^2
\end{equation}

By \cite{Jo} Thm. 4.1, the restriction map from $\cf$ to $H^\a(\ell_\emptyset)$ is continuous (for $\a\leq\a_0)$, implying in particular that there exists a constant $K_1=K_{1,\a}$ such that
\begin{equation}\label{K1Est}
\|\partial_\a g\|_{L^2(\ell_\emptyset\times \ell_\emptyset)}^2\leq K_1 \ce[g]
,\quad \forall g\in\cf.
\end{equation}
Hence,
\begin{align}\label{normEnEst}
\sum_{|\s|=n}\|\partial_\a \pi_\emptyset(f\circ w_\s)\|_{L^2(\ell_\emptyset)}^2
\leq K_1\sum_{|\s|=n} \ce[ f\circ w_\s]
= K_1 \left(\frac35\right)^{n}\sum_{|\s|=n}\ce_{C_\s}[ f]
= K_1 \left(\frac35\right)^{n}\ce[ f].
\end{align}
As a consequence, if $s > \deD = \max \{ \a^{-1},\frac{\log 12/5}{\log 2} \}$,
\begin{equation}\label{zetaEst}
\tr(|D|^{-s/2}|[D,\pi(f)]|^2|D|^{-s/2})
\leq
4K_1 \z(\a s)\sum_n\left(\frac35 2^{2-s} \right)^{n} \ce[f]
= \frac{4K_1 \z(\a s)}{1-\frac35 2^{2-s}} \ce[f],%\left(\right)^{-1}
\end{equation}
proving the first statement of the theorem.

\medskip\noindent
$(ii)$ $(a)$ We first determine the constant $A$. Let us observe that, up to a multiplicative constant,  the matrix $\begin{pmatrix}2&-1&-1\\-1&2&-1\\-1&-1&2\end{pmatrix}$ determines the unique non-degenerate positive semidefinite quadratic form $Q[v]=\sum_{i,j}|v_i-v_j|^2$ on $\bc^3$, invariant under permutations of the components and vanishing on constant vectors.
We then consider the linear map which associates to any vector $\vec{v}=(v_0,v_1,v_2)\in\bc^3$,  the $0$-harmonic function $g=\f(\vec{v})$ on the gasket taking values $v_i$, $i=0,1,2$,  on the extreme points of the lacuna $\ell_\emptyset$.  Then, the form $\vec{v}\to\|\partial_\a \pi_\emptyset(\f(\vec{v}))\|^2_{L^2(\ell_\emptyset\times\ell_\emptyset)}$ possesses al the properties of the quadratic form on $\bc^3$ induced by the matrix above.
Indeed, for $g= \f(\vec{v})$, $\|\partial_\a \pi_\emptyset(g)\|^2=\sum_{p} |p|^{2\a }|( \pi_\emptyset(g),e_{p})|^2 $, which is positive semidefinite and vanishes on constants. More precisely, it vanishes only if $g$ is constant on $\ell_\emptyset$, which implies $g$ is constant, since $g$ is zero-harmonic. Finally, such form is invariant under isometries of the circle, hence, in particular, it is invariant under the symmetry group of the triangle, that is, for 0-harmonic functions, under the permutation group of the vertices.

Hence $\|\partial_\a \pi_\emptyset(\f(\vec{v}))\|^2_{L^2(\ell_\emptyset\times\ell_\emptyset)}$ is a multiple of $Q[v]$, which in turn is $\frac1{25}\ce[g]$, since $g$ is $0$-harmonic, namely there exists a non-zero constant $K_2=K_{2,\a}$ such that
\begin{equation}\label{normEnEstBis}
\|\partial_\a \pi_\emptyset(g)\|^2_{L^2(\ell_\emptyset \times \ell_\emptyset)}
= K_2\ce[g],\quad \forall \text{ 0-harmonic }g.
\end{equation}
By formula \eqref{estimate1}, $\|S^*_{\pi_\emptyset(g)}\partial_\a e_k\|^2=\frac14\sum_{p}  (|k|^{2\a }+|p|^{2\a }-|p-k|^{2\a })^2 |( \pi_\emptyset(g),e_{p})|^2$.
As above, for $k\ne0$, $g\to\|S^*_{\pi_\emptyset(g)}\partial_\a e_k\|^2$ is a non-degenerate positive semidefinite quadratic form on $0$-harmonic functions, invariant under symmetries of the triangle and vanishing on constant vectors. Therefore it is again a multiple of the energy of $g$,  namely $\forall k\ne0\,\exists C_k>0$ such that
\begin{equation}\label{normEnEstTer}
\|S^*_{\pi_\emptyset(g)}\partial_\a e_k\|^2=C_k\ce[g],\quad \forall \text{ 0-harmonic }g.
\end{equation}
The constant $A$ is then defined as
$$A=\frac{ 2K_2 \z(\a \dE)+C(\dE) }{ \log 2 }, $$
where we set $C(s)=\sum_kC_k|k|^{-(s+2)\a}$.
We note that, by formula \eqref{estimate1},  $0<C_k\leq |k|^{2\a}$ for  $k\ne0$ and, by relations \eqref{estimate1}, \eqref{normEnEstBis}, \eqref{normEnEstTer},
$$
\ce[g]\sum_kC_k|k|^{-(s+2)\a}
=\sum_k |k|^{-(s+2)\a}\|S^*_{g}\partial_\a e_k\|^2
\leq 2\z(s\a)\| \de_\a g\|_{L^2(\ell_\emptyset \times \ell_\emptyset)}^2
=2K_2\z(s\a)\ce[g],
$$
 for any $0$-harmonic  $g$, namely $C(s)\leq 2K_2\z(\a s)$. In particular, $C$ is analytic for $s>1/\a$.

\medskip\noindent
$(b)$ formula \eqref{energy1} for $q$-harmonic functions. According to formulas \eqref{trace1}, \eqref{trace2}, \eqref{estimate2}, and \eqref{formula3}, we have
\begin{equation}\label{Zenergy}
Z_{D,f}(s)=\sum_\s 2^{(2-s)|\s|}\left(
2\zeta(\a s)\|\partial_\a \pi_\emptyset(f\circ w_\s)\|_{L^2(\ell_\emptyset \times \ell_\emptyset)}^2 +
\sum_{k\neq 0} |k|^{-(s+2)\a}\|S^*_{\pi_\emptyset(f\circ w_\s)}\partial_\a e_k\|_{L^2(\ell_\emptyset )}^2\right).
\end{equation}
Assume now $f$ to be $q$-harmonic, and $s> \dE$. Then, when $|\s|\geq q$, $f\circ w_\s$ is $0$-harmonic. Hence, making use of the equalites in  \eqref{normEnEstBis}, \eqref{normEnEstTer}, %and \eqref{Zenergy},

\begin{equation}\label{ZenergyBis}
\begin{aligned}
&\sum_{|\s|\geq q} \tr(|D_\s|^{-s/2}|[D_\s,\pi_\s(f)]|^2|D_\s|^{-s/2})\\
&=\sum_{|\s|\geq q} 2^{(2-s)|\s|}\left(
2K_2\zeta(\a s)\ce[ f\circ w_\s] +
\sum_{k\neq 0} C_k |k|^{-(s+2)\a}\ce[f\circ w_\s]\right)\\
&= 2K_2\zeta(\a s)\sum_{n\geq q}2^{(2-s)n}\sum_{|\s|=n}
\ce[ f\circ w_\s] +
\sum_{n\geq q}2^{(2-s)n}\sum_{k\neq 0} C_k |k|^{-(s+2)\a}\sum_{|\s|=n}\ce[f\circ w_\s]\\
&= \ce[f]\left(
2K_2\zeta(\a s)+C(s)
\right)\left(\frac35 2^{2-s}\right)^{q}\left(1-\frac35 2^{2-s}\right)^{-1}.
\end{aligned}
\end{equation}

As a consequence,
\begin{equation}\label{ZenergyTer}
\begin{aligned}
\Res_{s=\dE}Z_{D,f}(s) =&\lim_{s\to \dE^+}(s-\dE)Z_{D,f}(s)\\
=&\lim_{s\to \dE^+}(s-\dE)\Big(\sum_{|\s|<q} \tr(|D_\s|^{-s/2}|[D_\s,\pi_\s(f)]|^2|D_\s|^{-s/2})\\
&+ \ce[f]\left(2K_2\zeta(\a s)+C(s)\right)
\left((3/5) 2^{2-s}\right)^{q}\left(1-(3/5) 2^{2-s}\right)^{-1}\Big)\\
=& \frac{2K_2\zeta(\a\, \dE)+C(\dE)}{\log2}\,\ce[f]=A\, \ce[f].
\end{aligned}
\end{equation}

\medskip\noindent
$(c)$ formula \eqref{energy1} for  general functions in  $\mathcal{F}$.
Let us observe that, for any $s>\dE$, the functional
$$
f\to N_{s}(f):=Z^{1/2}_{D,f}(s)
$$
is a seminorm on $\mathcal{F}$. We now choose $f\in\cf$, and let $g$ be a finitely-harmonic function. Then,
\begin{align*}
&|(s-\dE)^{1/2}N_s(f)-A^{1/2}\ce^{1/2}[f]|\\
&\leq (s-\dE)^{1/2}|N_s(f)-N_s(g)|
+|(s-\dE)^{1/2}N_s(g)-A^{1/2}\ce^{1/2}[g]|
+A^{1/2}|\ce^{1/2}[g]-\ce^{1/2}[f]|\\
&\leq (s-\dE)^{1/2}N_s(f-g)
+|(s-\dE)^{1/2}N_s(g)-A^{1/2}\ce^{1/2}[g]|
+A^{1/2}\ce^{1/2}[f-g]\\
&\leq
\left((4K_1 \z(\a s))^{1/2}\left(\frac{s-\dE}{1-\frac35 2^{2-s}}\right)^{1/2} +A^{1/2}\right)\ce^{1/2}[f-g]
+|(s-\dE)^{1/2}N_s(g)-A^{1/2}\ce^{1/2}[g]|
\\
&\longrightarrow
\left(\left(\frac{4K_1 \z(\a\,\dE)}{\log2}\right)^{1/2} +A^{1/2}\right)\ce^{1/2}[f-g],
\quad s\to\dE^+
\end{align*}
where for the last inequality we used inequality \eqref{zetaEst}.
Since $g$ varies among finitely-harmonic functions, the last term may be made arbitrarily small, namely
$$
\exists\lim_{s\to\dE^+}|(s-\dE)^{1/2}N_s(f)-A^{1/2}\ce^{1/2}[f]|=0,
$$
and the thesis is proved.
\end{proof}

\subsection{Standard Dirichlet form and Dixmier traces}

In this section we reconstruct the standard Dirichlet form on the Sierpinski gasket using the Dixmier trace. In particular, the self-similar energy of a function in a suitable form core coincides with the evaluation, by the Dixmier trace, of the square of the modulus of its commutator with the Dirac operator $D$ times a symmetrized weight proportional to a negative power of $|D|$. Throughout all this section we shall assume $\a\in (\dE^{-1}, \a_0]$, so that $\deD=\dE$.

\begin{dfn}\label{decay}
For any $\eps>0$, we shall consider the set $\cb_\eps$ defined as follows:
$$
\cb_\eps:=\{f\in\cf:\exists  c_f>0\textit{ such that }\ce_{C_\s}[f]\leq c_f\, e^{-\eps |\s|}\ce[f], \s\in\Sigma\},
$$
and set $\cb:=\cup_{\eps>0}\cb_\eps$.
\end{dfn}

\begin{lem}\label{harmonicInB}
Let $f$ be a  $k$-harmonic function. Then $f\in\cb_{\eps}$ for $\eps\leq\log(5/3)$. More precisely, for any $\s\in\Sigma$, $\ce_{C_\s}[f]\leq(3/5)^{(|\s|-k)}\ce[f]$.
\end{lem}
\begin{proof}
It is easy to check that if $f$ is a harmonic function in the interior of a cell $C$ and $C_1$ is one of its three sub-cells, then $\osc(f)(C_1)\leq\frac35\osc(f)(C)$ (see for example \cite{St06} Chapter 1 Exercise 1.3.6). From this the thesis follows.

\end{proof}

\begin{prop}
Each $\cb_\eps$, $\eps>0$, is a vector space, $\cb$ is an algebra.
\end{prop}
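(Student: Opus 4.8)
The plan is to reduce both assertions to two elementary properties of the localized energies $\ce_{C_\s}$, read off directly from the defining formula $\ce_{C_\s}[u]=\lim_{m\to\infty}(5/3)^m\sum_{e\in E_m,\,e\subset C_\s}|u(e_+)-u(e_-)|^2$, without recourse to the energy measure $\Gamma$. For each fixed $m$ the quantity $Q^\s_m[u]:=(5/3)^m\sum_{e\in E_m,\,e\subset C_\s}|u(e_+)-u(e_-)|^2$ is a nonnegative quadratic form (a weighted sum of squares of the linear functionals $u\mapsto u(e_+)-u(e_-)$), so $\sqrt{Q^\s_m}$ satisfies the triangle inequality by Minkowski, and passing to the limit in $m$ (the limits converge on $\cf$ by the harmonic-structure theory of Kigami \cite{Kiga}) yields the seminorm property
\[
\sqrt{\ce_{C_\s}[f+g]}\le\sqrt{\ce_{C_\s}[f]}+\sqrt{\ce_{C_\s}[g]}.
\]
I will also use the partition identity $\ce[f]=\sum_{|\s|=n}\ce_{C_\s}[f]$ (each $e\in E_m$ with $m\ge n$ lies in a unique level-$n$ cell, adjacent cells sharing only a vertex), which gives $0\le\ce_{C_\s}[f]\le\ce[f]$, so that $\ce[f]=0$ forces $\ce_{C_\s}[f]=0$ for every $\s$; and the nesting $\cb_{\eps'}\subseteq\cb_\eps$ for $0<\eps\le\eps'$, immediate from $e^{-\eps'|\s|}\le e^{-\eps|\s|}$.

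For the vector-space claim, closure under scalars follows at once from $\ce_{C_\s}[\lambda f]=|\lambda|^2\ce_{C_\s}[f]$ and $\ce[\lambda f]=|\lambda|^2\ce[f]$. For sums, given $f,g\in\cb_\eps$ with constants $c_f,c_g$, the seminorm property combined with the bounds $\ce_{C_\s}[f]\le c_f e^{-\eps|\s|}\ce[f]$ and $\ce_{C_\s}[g]\le c_g e^{-\eps|\s|}\ce[g]$ gives
\[
\ce_{C_\s}[f+g]\le e^{-\eps|\s|}\big(\sqrt{c_f\,\ce[f]}+\sqrt{c_g\,\ce[g]}\big)^2=:e^{-\eps|\s|}M,
\]
with $M$ independent of $\s$. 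If $\ce[f+g]>0$ one takes $c:=M/\ce[f+g]$; if $\ce[f+g]=0$ then $\ce_{C_\s}[f+g]=0$ by the partition identity and any $c$ works. In either case $f+g\in\cb_\eps$.

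For the algebra claim, recall that $\cf$ is a subalgebra of $C(K)$, so products remain in $\cf$, and every element of $\cf$ is bounded since $\cf\subset C(K)$ and $K$ is compact. The key input is a Leibniz estimate: writing $(fg)(e_+)-(fg)(e_-)=f(e_+)\big(g(e_+)-g(e_-)\big)+g(e_-)\big(f(e_+)-f(e_-)\big)$ and applying $(a+b)^2\le 2a^2+2b^2$ termwise at level $m$, then letting $m\to\infty$, one obtains
\[
\ce_{C_\s}[fg]\le 2\|f\|_\infty^2\,\ce_{C_\s}[g]+2\|g\|_\infty^2\,\ce_{C_\s}[f].
\]
Then, for $f\in\cb_{\eps_1}$ and $g\in\cb_{\eps_2}$, set $\eps:=\min(\eps_1,\eps_2)$; by nesting both $f,g\in\cb_\eps$, and the two decay bounds combine to give $\ce_{C_\s}[fg]\le e^{-\eps|\s|}N$ with $N:=2\|f\|_\infty^2 c_g\ce[g]+2\|g\|_\infty^2 c_f\ce[f]$ independent of $\s$. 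Choosing $c:=N/\ce[fg]$ when $\ce[fg]>0$ (and arguing as before when $\ce[fg]=0$) shows $fg\in\cb_\eps\subseteq\cb$; together with the vector-space closure of $\cb_\eps$ this makes $\cb$ an algebra.

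The only step that is not pure bookkeeping with the scale factors $e^{-\eps|\s|}$ and $(5/3)^m$ is the Leibniz estimate, which is where the multiplicative structure genuinely enters. I expect the sole (minor) obstacle to be the interchange of the finite-level inequalities with the limit $m\to\infty$; this is harmless once one knows the defining limits for $\ce_{C_\s}$ converge on $\cf$, which the harmonic-structure theory guarantees.
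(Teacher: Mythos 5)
Your proof is correct and follows essentially the same route as the paper's: a subadditivity bound for $\ce_{C_\s}$ handles sums, a Leibniz-type bound $\ce_{C_\s}[fg]\le 2\|f\|_\infty^2\ce_{C_\s}[g]+2\|g\|_\infty^2\ce_{C_\s}[f]$ handles products, and in both cases one divides by $\ce[f+g]$ resp. $\ce[fg]$ when nonzero, taking $\eps=\min(\eps_1,\eps_2)$ for products. The only differences are cosmetic: you derive the two inequalities from the finite-level discrete forms and treat the degenerate case via the partition identity, whereas the paper invokes the (slightly sharper, constant-free) Leibniz inequality directly and dismisses the constant case as trivial.
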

\begin{proof}
We first prove additivity. The case $f_1+f_2 = {\it const}$ being trivial, we assume $\ce[f_1+f_2]\neq0$.
Then
\begin{align*}
\ce_{C_\s}[f_1+f_2]&\leq2\ce_{C_\s}[f_1]+2\ce_{C_\s}[f_2] \\
&\leq 2(c_1 \ce[f_1] + c_2 \ce[f_2] )e^{-\eps |\s|}=c\, e^{-\eps |\s|}\ce[f_1+f_2],
\end{align*}
where $c=\ce[f_1+f_2]^{-1}2(c_1 \ce[f_1] + c_2 \ce[f_2] )$.
As for multiplicativity, assuming as before $\ce[f_1 f_2]\neq0$, we get
\begin{align*}
\ce_{C_\s}[f_1 f_2]&\leq \|f_2|_{C_\s}\|_\infty \ce_{C_\s}[f_1]+ \|f_1|_{C_\s}\|_\infty \ce_{C_\s}[f_2]
\\
&\leq \|f_2\|_\infty c_1\ce[f_1] e^{-\eps_1 |\s|}+ \|f_1\|_\infty c_2\ce[f_2] e^{-\eps_2 |\s|}
=c\, e^{-\eps  |\s|}\ce[f_1 f_2],
\end{align*}
where $\eps=\min \{ \eps_1,\eps_2 \}$ and $c=\ce[f_1 f_2]^{-1}(\|f_2\|_\infty c_1\ce[f_1] + \|f_1\|_\infty c_2\ce[f_2] )$.
\end{proof}

\begin{rem}
Let us note that the inequalities in the proof of the following Lemma are very close to those in \eqref{zetaEst},  the Hilbert-Schmidt norm being replaced by the uniform norm. For functions in $\cb$, this allows us to prove an estimate for values of $s$ below $\dE$.
\end{rem}

\begin{lem}\label{s-range}
Assume  $0<\eps<(\dE-\a^{-1})\log2$, $f\in \cb_\eps$. Then,
for $s \geq\dE-\eps/\log2$, $[D,f]|D|^{-\frac12s}$ is bounded.
\end{lem}
\begin{proof}
Making use of  \eqref{doubleEstimate}, \eqref{K1Est}, we get:
\begin{align*}
\|[D,\pi(f)] \ |D|^{-\frac12s}\|^2
&=\sup_\s \|[D_\s,\pi_\s(f)]|\,|D_\s|^{-\frac12s}\|^2\\
&=\sup_\s 2^{|\s|(2-s)}\|[D_\emptyset,\pi_\emptyset(f\circ w_\s)]|\,|D_\emptyset|^{-\frac12s}\|^2\\
&\leq\sup_\s 2^{|\s|(2-s)}\tr(|D_\emptyset|^{-s/2}|[D_\emptyset,\pi_\emptyset(f\circ w_\s)]|^2|D_\emptyset|^{-s/2})\\
&\leq 4\sup_\s 2^{|\s|(2-s)} \z(\a s)\|\partial_\a \pi_\emptyset(f\circ w_\s) \|_{L^2(\ell_\emptyset \times \ell_\emptyset)}^2\\
&\leq 4K_1\z(\a s)\sup_\s 2^{|\s|(2-s)}   \ce[f\circ w_\s]\\
&\leq 4K_1\z(\a s)\sup_\s 2^{|\s|(2-s)} \left(\frac35\right)^{|\s|}  \ce_{C_\s}[ f]\\
%&\leq 4K_1\z(\a s)\sup_n \exp(((2-s)\log2+\log(3/5))n)\max_{|\s|=n}   \ce_{C_\s}[ f]\\
&\leq 4K_1\z(\a s)\sup_n \Big( \frac{12}5 2^{-s} \Big)^n \max_{|\s|=n}   \ce_{C_\s}[ f]\\
%&\leq 4cK_1\z(\a s) \sup_n \exp(((2-s)\log2+\log(3/5)-\eps)n)   \ce[ f].
&\leq 4c_fK_1\z(\a s) \sup_n \Big( \frac{12}5 2^{-s}e^{-\eps} \Big)^n   \ce[ f].
\end{align*}
We get a non trivial bound when $-s\log2+\log(12/5)-\eps\leq0$ and $\a s>1$, namely
$$
s\geq \max \Big\{ \a^{-1},\dE-\frac{\eps}{\log2} \Big\}.
$$
Since $\eps<(\dE-\a^{-1})\log2$, this amounts to $s\geq\dE -\frac{\eps}{\log2}$.
\end{proof}

\begin{rem}
Let us notice that the actual bound on the norm of $[D,f]|D|^{-\frac12s}$ does not play any role, see inequality \eqref{Dix<En}.
\end{rem}

\begin {thm}\label{FiniteMarionEnergy}
$(i)$ $\forall\eps>0\,\exists M_\eps\in\br:\forall f\in \cb_\eps,
\tr_\o\left(|D|^{- \dE/2}|[D,f]|^2|D|^{- \dE/2}\right)\leq M_\eps\ce[f]$.
When $0<\eps<(\dE-\a^{-1})\log2$ we may choose $M_\eps=4 e K_1 \eps^{-1}\z(\a \dE -\frac{\a\eps}{\log2}).$
\\
$(ii)$  In particular, $\displaystyle
|D|^{-\frac12 \dE}|[D,f]|^2|D|^{-\frac12 \dE}\in \cl^{1,\infty}(\ch)$, $f\in\cb$.
\end{thm}
\begin{proof}
It is enough to give the proof for $0<\eps<(\dE-\a^{-1})\log2$.
We shall use  Lemma 4.5 in \cite{FK} with the contraction $U$ given by the operator $[D,f]|D|^{-\frac12s}$ suitably normalized,  the positive operator $T$ given by $|D|^{-( \dE-s)}$, and the convex function $f(x)=x^{1+t}$, with $t>0$. Then, we take  $\dE -\frac{\eps}{\log2}\leq s<\dE$, so that  $[D,f]|D|^{-\frac12s}$ is bounded and $ |D|^{-\frac12( \dE+t( \dE-s))}|[D,f]|^2|D|^{-\frac12( \dE+t( \dE-s))} $ is trace class.
\begin{align*}
\tr\big( (|D|^{-\frac12 \dE}&|[D,f]|^2|D|^{-\frac12 \dE})^{1+t}\big)\\
&=
\tr\big( (|D|^{-\frac12( \dE-s)}|D|^{-\frac12s}|[D,f]|^2|D|^{-\frac12s}|D|^{-\frac12( \dE-s)})^{1+t}\big)\\
&=
\||D|^{-\frac12s}|[D,f]|^2|D|^{-\frac12s}\|^{1+t}\tr\big( (T^{\frac12}U^*UT^{\frac12})^{1+t}\big)\\
&=
\|[D,f]|D|^{-\frac12s}\|^{2(1+t)}\tr\big( (UTU^*)^{1+t}\big)\\
&\leq
\|[D,f]|D|^{-\frac12s}\|^{2(1+t)}\tr (UT^{1+t}U^*)\\
&=
\|[D,f]|D|^{-\frac12s}\|^{2(1+t)}\tr (T^{\frac12(1+t)}U^*UT^{\frac12(1+t)})\\
&=
\|[D,f]|D|^{-\frac12s}\|^{2t}\tr (|D|^{-\frac12( \dE-s)(1+t)}|D|^{-\frac12s}|[D,f]|^2|D|^{-\frac12s}|D|^{-\frac12( \dE-s)(1+t)})\\
&=
\|[D,f]|D|^{-\frac12s}\|^{2t}\tr (|D|^{-\frac12( \dE+t( \dE-s))}|[D,f]|^2|D|^{-\frac12( \dE+t( \dE-s))}).
\end{align*}
The previous inequality, together with  equation \eqref{zetaEst}, gives
$$
\tr\big( (|D|^{-\frac12 \dE}|[D,f]|^2|D|^{-\frac12 \dE})^{1+t}\big)
\leq \|[D,f]|D|^{-\frac12s}\|^{2t} 4K_1 \z(\a s)\ce[ f]\left(1-\frac35 2^{2- \dE-t( \dE-s)}\right)^{-1}
$$
hence, for $f\in\cb_\eps$,
$\displaystyle
\limsup_{t\to0}t\tr\big( (|D|^{-\frac12 \dE}|[D,f]|^2|D|^{-\frac12 \dE})^{1+t}\big)
\leq\frac{4K_1 \z(\a s)}{ ( \dE-s)\log2 }\ce[ f]$.
By Lemma \ref{s-range}, we may choose $s=\dE -\frac{\eps}{\log2}$, hence
\begin{equation}\label{Dix<En}
\limsup_{t\to0}t\tr\big( (|D|^{-\frac12 \dE}|[D,f]|^2|D|^{-\frac12 \dE})^{1+t}\big)
\leq 4K_1 \eps^{-1}\z(\a \dE -\frac{\a\eps}{\log2})\ce[ f].
\end{equation}
By Theorem 4.5 $(i)$ in \cite{CRSS}, we get
$$
\limsup_n\frac1{\log n}\sum_{k=1}^n\m_k(|D|^{-\frac12 \dE}|[D,f]|^2|D|^{-\frac12 \dE})
\leq  M_\eps\ce[f].
$$
%The result
$(i)$ follows by the definition of $\tr_\omega$, $(ii)$ follows by the definition of $\cl^{1,\infty}(\ch)$.
\end{proof}

\begin{lem}\label{HalfMiRoTrain}
Let $0<\d<d$, and $B$ be a densely defined, positive (possibly unbounded) operator on $\ch$, $T\in\cb(\ch)_+$ such that  $T^s\in\cl^1(\ch)$  for $s>d$, and   $T^{s/2}BT^{s/2}\in\cl^1(\ch)$  for $s>\d$. Then
\begin{equation}\label{StrResIneq}
 \limsup_{s\to\d^+}(s-\d)\tr (T^{s/2}BT^{s/2})
 \leq d\cdot \limsup_{r\to\infty}\frac1r\,\tr (T^{\d/2}BT^{\d/2})^{1+\frac1r}.
\end{equation}
If  $\displaystyle\lim_{s\to\d^+}(s-\d)\tr (T^{s/2}BT^{s/2})$ exists and $\displaystyle\limsup_{r\to\infty}\frac1r\,\tr (T^{\d/2}B^2T^{\d/2})^{1+\frac1r}$ is finite, then, for any dilation invariant state  $\o$ on  $\ell^\infty$ vanishing on $c_0$,
\begin{equation}\label{StrResIneq2}
 \lim_{s\to\d^+}(s-\d)\tr (T^{s/2}BT^{s/2})
 \leq d\cdot \tr_\o (T^{\d/2}BT^{\d/2}).
\end{equation}
\end{lem}
\begin{proof} %Let us  set $A=BT^{\d/2}$.
For $r>0$,   H\"older inequality (\cite{Dix},Thm 6) for the exponents $1+\frac1r$, $2(r+1)$, $2(r+1)$  gives
$$
\tr (T^{s/2}BT^{s/2})=\tr \big(T^{(s-\d)/2}(T^{\d/2}BT^{\d/2})T^{(s-\d)/2}\big)
\leq
\big(\tr(T^{\d/2}BT^{\d/2})^{1+\frac1r}\big)^{\frac{r}{r+1}}\big(\tr(T^{(s-\d)(r+1)})\big)^{\frac1{r+1}}.
$$
Setting $\displaystyle r=\frac{d+\eps+\d-s}{s-\d}$ for $\eps>0$, i.e. $(s-\d)(r+1)=d+\eps$, we get
\begin{align*}
\limsup_{s\to\d^+}(s-\d) \tr (T^{s/2}BT^{s/2})
&\leq\limsup_{r\to\infty}\frac{d+\eps}{r+1}
\big(\tr (T^{\d/2}BT^{\d/2})^{1+\frac1r}\big)^{\frac{r}{r+1}}\big(\tr(T^{(d+\eps)})\big)^{\frac1{r+1}}\\
&\leq(d+\eps)\limsup_{r\to\infty}\frac{r}{r+1}
\big(\frac1r\,\tr (T^{\d/2}BT^{\d/2})^{1+\frac1r}\big)^{\frac{r}{r+1}}\\
&=(d+\eps)\limsup_{r\to\infty}\frac1r\,\tr (T^{\d/2}BT^{\d/2})^{1+\frac1r}.
\end{align*}
Inequality \eqref{StrResIneq} follows by the arbitrariness of $\eps$. With the same notations as in \cite{CPS}, we may replace the $\limsup$ in the argument above with a $\tilde\o-\lim$. Then \eqref{StrResIneq2} follows by \cite{CPS}, Thm 3.1.
\end{proof}

\begin{prop}\label{SelfSimInv}
The quadratic form $f\to\tr_\o(|D|^{- \dE/2}|[D,f]|^2|D|^{- \dE/2})$ defined on  continuous functions with values in $[0,\infty]$  is self-similar with parameter $5/3$ and invariant under the symmetries of the triangle.
\end{prop}
\begin{proof}
Let us prove self-similarity.
\begin{align*}
\sum_{i=1,2,3}&\tr_\o(|D|^{- \dE/2}|[D,\pi(f\circ w_i)]|^2|D|^{- \dE/2})
=\sum_{i=1,2,3}\tr_\o\left(\bigoplus_\s|D_\s|^{- \dE/2}|[D_\s,\pi_\s(f\circ w_i)]|^2|D_\s|^{- \dE/2}\right)\\
&=\tr_\o\left(\bigoplus_{i=1,2,3}\bigoplus_\s
2^{(2- \dE)|\s|}|D_\emptyset|^{- \dE/2}|[D_\emptyset,\pi_\emptyset(f\circ w_i\circ w_\s)]|^2|D_\emptyset|^{- \dE/2}\right)\\
&=\tr_\o\left(\bigoplus_{\t\ne\emptyset}
2^{(2- \dE)(|\t|-1)}|D_\emptyset|^{- \dE/2}|[D_\emptyset,\pi_\emptyset(f\circ w_\t)]|^2|D_\emptyset|^{- \dE/2}\right)\\
&=2^{-(2- \dE)}\big(\tr_\o(|D|^{- \dE/2}|[D,\pi(f)]|^2|D|^{- \dE/2}) -
\tr_\o(|D_\emptyset|^{- \dE/2}|[D_\emptyset,\pi_\emptyset(f)]|^2|D_\emptyset|^{- \dE/2})\big)\\
&=\frac35\tr_\o(|D|^{- \dE/2}|[D,\pi(f)]|^2|D|^{- \dE/2}),
\end{align*}
where $\tr_\o(|D_\emptyset|^{- \dE/2}|[D_\emptyset,\pi_\emptyset(f)]|^2|D_\emptyset|^{- \dE/2})$  vanishes since $|D_\emptyset|^{- \dE/2}|[D_\emptyset,\pi_\emptyset(f)]|^2|D_\emptyset|^{- \dE/2}$ is trace class, as shown in Lemma \ref{traceClass}.

We now prove symmetry invariance. We observe that the symmetry group of the triangle is generated by the reflections along the axes. Hence, it is enough to show that, for a  reflection $T$ along an axis of the gasket, and setting $f^T(x)=f(Tx)$, the operator $|D|^{- \dE/2}|[D,\pi(f^T)]|^2|D|^{- \dE/2}$ and the operator $ |D|^{- \dE/2}|[D,\pi(f)]|^2|D|^{- \dE/2}$ are unitary equivalent. Since
$$
|D|^{- \dE/2}|[D,\pi(f)]|^2|D|^{- \dE/2}
=\bigoplus_{\s}
2^{(2- \dE)|\s|}|D_\emptyset|^{- \dE/2}|[D_\emptyset,\pi_\emptyset(f\circ w_\s)]|^2|D_\emptyset|^{- \dE/2},
$$
$|\s^T|=|\s|$, and $f^T\circ w_\s=f\circ w_{\s^T}\circ T$, where $\s\to \s^T$ denotes the natural action of $T$ on the word $\s$,
$$
|D|^{- \dE/2}|[D,\pi(f^T)]|^2|D|^{- \dE/2}
=\bigoplus_{\s}
2^{(2- \dE)|\s|}|D_\emptyset|^{- \dE/2}|[D_\emptyset,\pi_\emptyset(f\circ w_\s\circ T)]|^2|D_\emptyset|^{- \dE/2}.
$$
Moreover, as in \eqref{trace2},
\begin{align*}
|D_\emptyset|^{- \dE/2}&|[D_\emptyset,g]|^2|D_\emptyset|^{- \dE/2}\\
&=\begin{pmatrix}
(\partial_\a^* \partial_\a)^{-\dE/4}S^*_{g^*}S_{g^*}(\partial_\a^* \partial_\a)^{-\dE/4}&0\\
0&(\partial_\a \partial_\a^*)^{-\dE/4}S_{g}S^*_{g}(\partial_\a \partial_\a^*)^{-\dE/4}
\end{pmatrix}.
\end{align*}
Therefore, it is enough to show that, for any $g\in H^\a(\ell_{\emptyset})$,  $(\partial_\a^* \partial_\a)^{-\dE/4}$ $S^*_{g^*\circ T}S_{g^*\circ T}$ $(\partial_\a^* \partial_\a)^{-\dE/4}$ and $(\partial_\a^* \partial_\a)^{-\dE/4}$ $S^*_{g^*}S_{g^*}$ $(\partial_\a^* \partial_\a)^{-\dE/4}$ are unitary equivalent, and $(\partial_\a \partial_\a^*)^{-\dE/4}$ $S_{g\circ T}S^*_{g\circ T}$ $(\partial_\a \partial_\a^*)^{-\dE/4}$  and $(\partial_\a \partial_\a^*)^{-\dE/4}$ $S_{g}S^*_{g}$ $(\partial_\a \partial_\a^*)^{-\dE/4}$ are unitary equivalent.

Let us consider the (self-adjoint) unitary operator $U_{T,0}$ on $L^2(\ell_\emptyset)$ given by $(U_{T,0}\xi)(x)=\xi(Tx)$ and the (self-adjoint) unitary operator $U_{T,1}$ on $L^2(\ell_\emptyset\times\ell_\emptyset)$ given by $(U_{T,1}\eta)(x,y)=\eta(Tx,Ty)$. A direct computation shows that $U_{T,1}\partial_\a=\partial_\a U_{T,0}$ and $M_{g^T}=U_{T,0}M_g U_{T,0}$, hence $S_{g^T}=U_{T,1}S_gU_{T,0}$. As a consequence, for $g\in H^\a(\ell_{\emptyset})$, we get
\begin{align*}
(\partial_\a^* \partial_\a)^{-\dE/4}S^*_{g^*\circ T}S_{g^*\circ T}(\partial_\a^* \partial_\a)^{-\dE/4}
& =U_{T,0}
(\partial_\a^* \partial_\a)^{-\dE/4}S^*_{g^*}S_{g^*}(\partial_\a^* \partial_\a)^{-\dE/4}
U_{T,0} \,, \\
(\partial_\a \partial_\a^*)^{-\dE/4}S_{g\circ T}S^*_{g\circ T}(\partial_\a \partial_\a^*)^{-\dE/4}
& =U_{T,1}
(\partial_\a \partial_\a^*)^{-\dE/4}S_{g}S^*_{g}(\partial_\a \partial_\a^*)^{-\dE/4}
U_{T,1}\,,
\end{align*}
and the thesis follows.
\end{proof}

\begin{rem}
With the same argument as in the Proposition above, we may show that the quadratic form $f\to\tr_\o(|D|^{- \d/2}|[D,f]|^2|D|^{- \d/2})$ is self-similar with parameter $2^{2- \d}$. As a consequence the value $\dE$ is uniquely determined by requiring self-similarity with scaling parameter $5/3$. Hence the energy dimension $\dE$ is not just an abscissa of convergence, but is completely determined by the structure of the Dirac operator and the scaling of the energy under self-similarity.
\end{rem}

We state here a simple variant of a well known uniqueness result, cf. e.g. \cite{Sabot,CuSt}.

\begin{lem}\label{easyLemma}
A quadratic form $\cg$ which is finite on finitely harmonic functions on the gasket, vanishes only on constants, is self-similar with parameter $5/3$, and invariant under the symmetries of the triangle coincides with a multiple of the standard Dirichlet form on finitely harmonic functions.
\end{lem}
\begin{proof}
Let us consider the linear map which associates to any vector $\vec{v}=(v_0,v_1,v_2)\in\bc^3$,  the $0$-harmonic function $g=\f(\vec{v})$ on the gasket taking values $v_i$, $i=0,1,2$,  on the extreme points of the lacuna $\ell_\emptyset$.  Then, the quadratic form $\vec{v}\to\cg[g]$ is positive semidefinite on $\bc^3$, vanishes only on constant vectors, since $g=\f(\vec{v})$ is constant  iff $\vec{v}$ is constant, and is invariant under permutations of the components, because of the symmetry invariance in the assumptions.
As in the proof of Theorem \ref{thm:res=en} $(ii) (a)$, there exists a constant $k$ such that $\cg[g]=k\ce[g]$ for any $0$-harmonic function $g$.
Let now $h$ be $n$-harmonic, so that $h\circ w_\s$ is $0$-harmonic for $|\s|=n$.
Then, by self-similarity,
\begin{align*}
\cg[h]
%&
=\left(\frac53\right)^n\sum_{|\s|=n}\cg[h\circ w_\s]
%\\&
=k\left(\frac53\right)^n\sum_{|\s|=n}\ce[h\circ w_\s]=k\ce[h].
\end{align*}
The thesis follows.
\end{proof}

\begin{cor}\label{cor:KigamiEnergy}
On the algebra $\cb$, the  standard Dirichlet form $\ce$ and the quadratic form $f\to\ce_D[f]:=\tr_\o(|D|^{- \dE/2}|[D,f]|^2|D|^{- \dE/2})$ coincide up to a multiplicative constant, namely there exists a constant
 $B_\o$, which may depend on the generalized limit $\o$, such that,
 \begin{equation}\label{Dix=Dir}
 \ce_D[f] = B_\o\ce[g],\quad\forall  g\in\cb.
\end{equation}
\end{cor}
\begin{proof}
We first prove that, for $f\in \cb$, the quadratic form in the statement is bounded from above and from below by multiples of the standard Dirichlet form on the gasket. In the inequality \eqref{En<Dix<En} below the upper bound seems to depend on $\eps$, but, as soon as the statement of the theorem is proved, the smallest $M_\eps$ will work on the whole $\cb$.

Let $f\in\cb_\eps$. We  may then apply inequality \eqref{StrResIneq2} with $T=|D|^{-1}$, $B=|[D,f]|^2$, $\d= d_E$. In fact, condition
$\exists\displaystyle\lim_{s\to\d^+}(s-\d)\tr (T^{s/2}BT^{s/2})$ is satisfied by Theorem \ref{thm:res=en}, and condition $\displaystyle\limsup_{r\to\infty}\frac1r\,\tr (T^{\d/2}B^2T^{\d/2})^{1+\frac1r}<+\infty$ follows by \eqref{Dix<En}. Then, Theorems \ref{thm:res=en}, \ref{FiniteMarionEnergy} and inequality \eqref{StrResIneq2} give, for sufficiently small $\eps$,
\begin{equation}\label{En<Dix<En}
\frac{A}{d}\ce[f]\leq \ce_D[f] \leq M_\eps\ce[f].
\end{equation}
In particular, by Lemma \ref{harmonicInB}, previous inequalities hold for finitely harmonic functions.

Then, we observe that \eqref{Dix=Dir} holds for finitely harmonic functions. Indeed, by inequality \eqref{En<Dix<En} and Proposition \ref{SelfSimInv}, the assumptions of Lemma \ref{easyLemma} are satisfied.

Finally, we may proceed as in the proof of Theorem \ref{thm:res=en} $(ii) (c)$.
Choose $f\in\cb_\eps$ and let $g$ be a finitely harmonic function. Then,
\begin{align*}
|\ce^{1/2}_D[f]& - B^{1/2}_\o \ce^{1/2}[f]|\\
&\leq |\ce^{1/2}_D[f] -  \ce^{1/2}_D[g]|+|\ce^{1/2}_D[g] - B^{1/2}_\o \ce^{1/2}[g]|+B^{1/2}_\o|\ce^{1/2}[g] -  \ce^{1/2}[f]|\\
&\leq \ce^{1/2}_D[f-g] +B^{1/2}_\o\ce^{1/2}[f-g] \leq (M^{1/2}_\eps+B^{1/2}_\o)\ce^{1/2}[f-g].
\end{align*}
Since $g$ varies among finitely-harmonic functions, the last term may be made arbitrarily small, and the Theorem is proved.
\end{proof}

% \newpage

\section{A two parameter deformation of spectral triple: the $(\b,\a)$ plane}\label{betaalpha}

In this subsection we consider a deformation of our construction. Namely, after having chosen $\alpha \in (0,1]$, we introduce a further parameter $\b>0$, and, for any $\sigma\in \cup_n\{0,1,2\}^n$, consider the triple $(\cc(K),\ch_\sigma,D_\sigma)$, where $\ch_\sigma=\ch_\emptyset$, $D_\sigma=2^{\b |\sigma|}D_\emptyset$, and the algebra $\cc(K)$ acts via the representation $\pi_\sigma$, with $\pi_\sigma(f)=\pi_\emptyset(f\circ w_\sigma)$. Here, we only give the statements concerning this two-parameter family of spectral triples. On the one hand, the proofs are more or less direct extensions of the proofs for the case $\b=1$. On the other hand, detailed proofs are contained in a previous version of this paper, available on the arXiv as http://arxiv.org/abs/1112.6401v2.

\begin{dfn}\label{dfn:thebetatriple}
Let us consider for values of the parameters $\alpha\in (0,1]$ and $\beta>0$, the triple $(\ca,\ch,D)$, where the Hilbert space and the self-adjoint operator are given by $\ch=\oplus_{\s\in\Sigma}\ch_\s$, $D=\oplus_{\s\in\Sigma}D_\s$ and $\ca$ is the subalgebra of $\cc(K)$ consisting of functions with bounded commutator with $D$, acting on $\ch$ via the representation $\pi=\oplus_{\s\in\Sigma}\pi_\s$.

As customary in Noncommutative Geometry, we denote by $\oint f:=tr_\omega(f|D|^{-d})$ the functional on the algebra $\cc(K)$ obtained through the Dixmier logarithmic trace $tr_\omega$, where $d$ is the abscissa of convergence of the function $s\to \tr(|D|^{-s})$.
\end{dfn}

As first result in this section, we describe how the dimensional spectrum of the triple depends on $\b>0$ and that, independently on $0<\b<\a \frac{\log 3}{\log 2}$, the measure induced on $K$ by the functional $f\mapsto \oint f$ is a suitable multiple of its Hausdorff measure.

\begin{thm}\label{thm:betavolmeas}
The volume zeta function $\mathcal{Z}_D$ of the triple $(\ca,\ch,D)$, i.e. the meromorphic extension of the function $\mathbb{C}\ni s\mapsto \tr(|D|^{-s})$ initially defined for $s\in\mathbb{C}$ having large real part, is given by
$$
\mathcal{Z}_D (s)=\frac{4\z(\a s)}{1-3\cdot 2^{-\b s}}\, ,
$$
where  $\z$ denotes the Riemann zeta function. Therefore, the dimensional spectrum  of the triple is
$$
\mathcal{S}_{\it dim} =\{\a^{-1}\}\cup \bigg\{ \frac{\log3}{\b \log 2}\left(1+\frac{2\pi i}{\log 3}k\right): k\in \mathbb{Z} \bigg\} \subset \mathbb{C}\, .
$$
As a consequence, the metric dimension $d_D$ of the spectral triple $(\ca,\ch,D)$, namely the abscissa of convergence of its volume zeta function, is $\max \{ \a^{-1},\b^{-1}d_H\}$.

When $0<\b<\a\, d_H$, i.e. $d_D=\b^{-1}d_H$, $\mathcal{Z}_D$ has a simple pole in $d_D$, and the measure associated via Riesz theorem with the functional $\cc(K)\ni f\to\oint f$ coincides with a multiple of the Hausdorff measure $H_{\dH}$:
\[
\vol(f) \equiv \int_K f\,d\,{\rm vol}:=tr_\omega(f|D|^{-d_D})=\frac{4d_D}{\log 3 }\frac{\z(d_D)}{(2\pi)^{d_D}} \int_K f\, dH_{\dH}\qquad f\in C(K).
\]
\end{thm}

The next result says that for $\a,\b\in(0,1]$ the above triple is a spectral triple according to Connes \cite{Co}, the associated seminorm is a Lip-morm on $\cc(K)$ in the sense of Rieffel \cite{Rief} and the induced topology on $K$ coincides with the original one.
\par\noindent
If moreover $\a<\b<1$, we obtain that the associated Connes metric is bi-Lipschitz w.r.t. the root $(\r_{geo})^\b$ of the geodesic metric on $K$.

\begin{cor}\label{cor:betatriple}
For any $\a,\b\in(0,1]$  the triple $(\ca,\ch,D)$ is a finitely summable spectral triple, and the seminorm $f\to\|[D,f]\|$ is a Lip-norm according to Rieffel  \cite{Rief}. Therefore, the Connes metric
$$
\r_D(x,y)= \sup\{ |f(x)-f(y)| :f\in\ca\, ,\|[D,f]\|\le 1\}
$$
induces the original topology on $K$. Let $\r_{geo}$ denote the Euclidean geodesic metric on $K$. Then, if $\b>\a$, the seminorm $\|[D,f]\|$ and the H\"older seminorm $\|f\|_{C^{0,\b}(K)}$ are equivalent, and the metric $\r_D$ is bi-Lipschitz w.r.t. the  metric $(\r_{geo})^\b$ on $K$.
%\\
%In particular, if $\b=1$, the metric $\r_D$ is bi-Lipschitz w.r.t.  $\r_{geo}$.
%
\end{cor}

Let $\a,\b\in(0,1]$, and denote by $(\ca,\ch,D)$   the spectral triple for the gasket considered above. Let $F$ be the phase of $D$, and, for any $\s\in\Sigma$, denote by $\g_\s,\eps_\s,S_\s$, a copy of the operators $\g,\eps,S$ associated, by Proposition \ref{FrModS1}, to the lacuna $\ell_\s$, identified with $\T$. Finally, let $\g=\oplus_{\s\in\Sigma} \g_\s$,  $\eps =\oplus_{\s\in\Sigma} \eps_\s$,  $S = \oplus_{\s\in\Sigma} S_\s$.

\begin{thm}
The quintuple $\cf = (\ch,\pi,F,\g,\eps)$ is a tamely degenerate 1-graded Fredholm module on $\ca$.
The ungraded Fredholm module $\cf^+ = (\ch^+,\pi^+,F^+)$ associated to it by Proposition \ref{Ex8.8.4} is a tamely degenerate module on $\ca$, and $F^+= S$.
The module $\cf^+$ is non trivial in K-homology. In particular, it pairs non trivially with the generators of the (odd) K-theory of the gasket associated with the lacunas.
\end{thm}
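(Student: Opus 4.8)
The plan is to assemble $\cf$ from the per-lacuna data of Proposition \ref{FrModS1} by direct sums, so that the three assertions reduce to one uniform analytic estimate, one index computation for a block-diagonal operator, and the single-circle winding calculation. First I would record the easy structural facts. Since $F=\sgn(D)=\oplus_\s\sgn(D_\s)$ is self-adjoint with $F^2=I$ on $\ker(F)^\perp=\ker(D)^\perp$, and since $(\ca,\ch,D)$ is a genuine spectral triple by Corollary \ref{cor:triple}, the compactness of $[F,\pi(a)]$ is immediate from Proposition \ref{triple-module} applied to the full operator $D$ (so no uniformity in $\s$ is needed for this part). The relations $F\g+\g F=0$ and $\pi(a)\g=\g\pi(a)$ hold on each summand and pass to the direct sum, and the skew-adjointness of $\eps=\oplus_\s\eps_\s$ together with $\eps^2+I=P_0$ (vanishing on $\ker(F)^\perp$) follows summand-wise from Proposition \ref{FrModS1}.

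The only delicate condition is the compactness of $[\eps,\pi(a)]$. Setting $\mathbb{S}:=\oplus_\s(I\oplus S_\s)$, the per-lacuna identity $i\eps_\s=(I\oplus S_\s)F_\s(I\oplus S_\s)$ gives $i\eps=\mathbb{S}F\mathbb{S}$, so that
\[
[i\eps,\pi(a)]=[\mathbb{S},\pi(a)]\,F\mathbb{S}+\mathbb{S}\,[F,\pi(a)]\,\mathbb{S}+\mathbb{S}F\,[\mathbb{S},\pi(a)].
\]
The middle term is compact because $[F,\pi(a)]$ is, so everything reduces to the compactness of $[\mathbb{S},\pi(a)]=\oplus_\s\bigl(0\oplus[S_\s,\pi_\s(a)]\bigr)$. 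Each $[S_\s,\pi_\s(a)]$ is a Toeplitz commutator on $\ell_\s\cong\T$, hence compact; the hard part will be showing that the block norms tend to $0$, which is exactly what makes the block-diagonal operator compact. Here I would use that $\pi_\s(a)=L_{a\circ w_\s}$ and that $w_\s$ contracts by $2^{-|\s|}$, so the H\"older seminorm of $a\circ w_\s$ scales like $2^{-|\s|\eta}$, together with the classical bound $\|[S,M_g]\|\le C_\eta[g]_{C^{0,\eta}}$ for the Hilbert-transform commutator; this yields $\|[S_\s,\pi_\s(a)]\|\lesssim 2^{-|\s|\eta}\to0$. Since $a\mapsto[\eps,\pi(a)]$ is norm continuous and H\"older functions are uniformly dense, compactness then holds for every $a\in C(K)$, and $\cf$ is a $1$-graded Fredholm module.

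Next I would run the Bott-periodicity reduction of Proposition \ref{Ex8.8.4} on the direct sum; as all ingredients are direct sums of their single-circle counterparts, $F^+=\oplus_\s F_\s^+=\oplus_\s S_\s=S$, whose kernel is $\overline{\ssv\{e_0^{(\s)}:\s\in\Sigma\}}$. To obtain tame degeneracy I would verify it for $\cf^+$ and transport it to $\cf$ and $\cf^-$ via the last clause of Proposition \ref{Ex8.8.4}. For invertible $u\in Mat_k(\ca)$ the projection $Z^+$ onto $\ker F^+$ makes $Z^+\pi^+(u)Z^+$ block-diagonal, the $\s$-block being the $k\times k$ matrix $m_\s(u)$ of means of $u$ over $\ell_\s$. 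Because $\ell_\s$ shrinks to a point $x_\s$ and $u$ is uniformly continuous, $\|m_\s(u)-u(x_\s)\|\to0$; as $u$ is invertible with $\sup_x\|u(x)^{-1}\|<\infty$, the blocks $m_\s(u)$ are invertible with uniformly bounded inverses for $|\s|$ large, so all but finitely many are invertible and $Z^+\pi^+(u)Z^+$ is Fredholm. Each block $m_\s(u)$ is a square matrix, so its kernel and cokernel have equal dimension and contribute $0$ to the index; hence $\ind(Z^+\pi^+(u)Z^+)=0$ and all three modules are tamely degenerate.

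Finally, for the nontrivial pairing, for invertible $u$ the operator $P^+\pi^+(u)P^+=\oplus_\s P_\s^+\pi_\s^+(u)P_\s^+$ is block-diagonal, and each block is the single-circle Toeplitz operator whose index, by Proposition \ref{FrModS1}, equals the winding number of $u\circ w_\s$ around $\ell_\emptyset$, i.e. of $u$ around $\ell_\s$ (the similarity $w_\s$ being orientation preserving). I would then take the generator $u_\s$ dual to the lacuna class $[\ell_\s]$: a unimodular H\"older function, hence in $\ca$, winding once around $\ell_\s$ and trivially around every other lacuna. Only the $\s$-block then contributes, $P^+\pi^+(u_\s)P^+$ is Fredholm with $\langle\cf^+,u_\s\rangle=1\neq0$, and so $\cf^+$ pairs nontrivially with the lacuna generators and is non-trivial in K-homology. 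I expect the uniform Toeplitz-commutator decay in the second paragraph, and the explicit construction of the H\"older winding generators $u_\s$, to be the two points requiring genuine work.
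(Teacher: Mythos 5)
Your proposal is correct and establishes all three assertions, but its two technical pillars differ genuinely from the paper's. For the compactness of $[\eps,\pi(a)]$ the paper uses no commutator estimate at all: it restricts to affine functions of the plane, for which the exact scaling identity $f\circ w_\s|_{\ell_\emptyset}=\mathrm{const}+2^{-|\s|}f|_{\ell_\emptyset}$ gives $[S_\s,\pi_\emptyset^0(f\circ w_\s)]=2^{-|\s|}[S_\emptyset,\pi_\emptyset^0(f)]$ \emph{exactly}, so the singular values of the direct sum are $\set{2^{-|\s|}s_n}$ and compactness is immediate; it then passes to the piecewise-affine algebras $\Aff_n(K)$ and concludes by uniform density. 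You instead invoke the Schur-test bound $\|[S,M_g]\|\leq C_\eta\,[g]_{C^{0,\eta}}$ for Hilbert-transform commutators, applied to $g=a\circ w_\s$ whose H\"older seminorm scales like $2^{-|\s|\eta}$; this is a correct and somewhat more robust mechanism (it handles every H\"older function at once), at the price of a harmonic-analysis estimate the paper's scaling trick avoids. The divergence is sharper for tame degeneracy: the paper reduces to the generators $u_\s$ of $K^1(K)\cong\bigoplus_\Sigma\bz$, using that $u\mapsto\ind(P_0\pi(u)P_0)$ factors through $K_1$, and then localizes the index to a single lacuna, declaring it "clearly trivial"; you verify the definition directly for \emph{every} invertible $u\in Mat_k(\ca)$, identifying $Z^+\pi^+(u)Z^+$ with the block-diagonal operator of mean matrices $m_\s(u)$, proving all but finitely many blocks invertible (with uniformly bounded inverses) by uniform continuity, and noting that square matrix blocks contribute zero index. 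Your version is more self-contained — it needs neither the known structure of $K^1(K)$ nor the homomorphism property of the kernel index — and the same uniform-invertibility point makes your pairing computation (Fredholmness of $\bigoplus_\t P_\t^+\pi_\t^+(u_\s)P_\t^+$, together with the explicit Lipschitz generators $u_\s(z)=(z-c_\s)/|z-c_\s|$) fully rigorous where the paper again argues by localization onto a single lacuna via Proposition \ref{FrModS1}. In short: same skeleton (direct-sum reduction, Bott reduction through Proposition \ref{Ex8.8.4}, lacuna-wise index computation), but genuinely different key lemmas, with yours trading the paper's elementary exact-scaling and K-theoretic shortcuts for quantitative estimates and a direct verification that also fills in details the paper leaves implicit.
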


As a last result in this section, we compute how the energy dimension of the triple depends on $\b>0$ and that, independently of the values of $\a$ and $\b$ in suitable ranges, the induced quadratic form on $K$, defined as the residue of a suitable energy functional, is a  multiple of its standard Dirichlet form.

\begin{thm}\label{thm:res=betaen}
Assume as above that $\b>0$, $\frac12<\a\leq\a_0$, with $\a_0=\frac{\log(10/3)}{\log4}\approx 0.87$, and assume $f$ has finite energy
\footnote{The conditions $\b>0$ and $2-\frac{\log 5/3}{\b\log 2}>\a^{-1}$ indeed imply $\a>\frac12$. %and $\b(2-\a^{-1})>\frac{\log(5/3)}{\log 2}$.
}.
Then  the abscissa of convergence of the energy functional
\[
\tr(|D|^{-s/2}|[D,f]|^2|D|^{-s/2})
\]
is given by $\deD = \max \{ \a^{-1},2-\frac{\log 5/3}{\b\log 2}\}$.
Whenever  $\b(2-\a^{-1})>\frac{\log(5/3)}{\log 2}$, the energy functional has a simple pole at  $\deD$ and its residue coincides with the value of the standard Dirichlet form on $f$
\begin{equation}\label{energybeta}
{\rm Res}_{s=\d_D}\tr(|D|^{-s/2}|[D,f]|^2|D|^{-s/2})
=\const\cdot \ce[f].
\end{equation}
\end{thm}
Finally, notice that, up to a multiplicative  constant, the residue of the energy functional coincides with the standard Dirichlet form for any $\b$ in a suitable range, so that Corollary \ref{cor:KigamiEnergy} remains true as it is.
\begin{rem}
We note here that $\b$ rescales the Euclidean geodesic metric $\rho_{geo}$ of $K$ essentially to $\rho_{geo}^\b$, therefore it has to be expected that the associated metric dimension scales from $d_H$ to $\b^{-1}d_H$, while the corresponding volume measure remains the same up to a multiplicative constant. A similar effect occurs for the energy: the energy dimension passes from $2-\frac{\log 5/3}{\log 2}$ to $2-\b^{-1}\frac{\log 5/3}{\log 2}$, the energy form remaining the same up to a multiplicative constant.
\end{rem}
\vskip0.5truecm
Let us summarize the properties of the family of triples introduced above, for the different values of the parameters $\a$ and $\b$.
\vskip0.2truecm
\begin{itemize}
\item The whole construction makes sense only if $0<\a\leq1$, $\b\in\br$.
\item If $\b>0$, the inverse of $D$ on the orthogonal complement of the kernel is compact.
\item if $\b>0$ and $\a>\b/d_H$, the noncommutative volume measure coincides (up to a constant factor) with the Hausdorff measure $H_{d_H}$, $d_H=\frac{\log3}{\log2}$ being the Hausdorff (or similarity) dimension. The metric dimension is $d_D=\frac{d_H}{\b}$.
\item If $0<\b\leq1$, $\|[D,f]\|$ is a densely defined Lip-norm, $(\ca,\ch,D)$ is a spectral triple and $(\pi,\ch,F,\gamma,\eps)$ is a 1-graded Fredholm module. The latter has non-trivial pairing with the topological K-theory group $K^{1}(K)$ of the gasket.
%\item If $0<\a<\b\le 1$, the Connes metric $\r_D$ is bi-Lipschitz w.r.t. $(\r_{geo})^\b$. Moreover, if $\b=1$ and $0.79\approx \frac{\log2}{\log(12/5)}<\a\leq\frac{\log(10/3)}{\log4}\approx 0.87$,  the dimension $d_D$ coincides with the Hausdorff dimension of the gasket and $\r_D$ is bi-Lipschitz with the Euclidean geodesic distance $\r_{geo}$.
\item If $d_{E,\b}^{-1} <\a\leq\a_0<\b\leq1$, where $d_{E,\b}=2-\frac{\log(5/3)}{\log 2}\b^{-1}$, then the residue at $s=d_{E,\b}$ of the energy functional $\tr (|D|^{-s/2}[D,f]^2|D|^{-s/2})$ coincides, up to a multiplicative factor, with  the standard Dirichlet form $\ce[f]$, for all finite energy functions.
\end{itemize}

     \begin{figure}[hbpt]
 	 \centering
	 \psfig{file=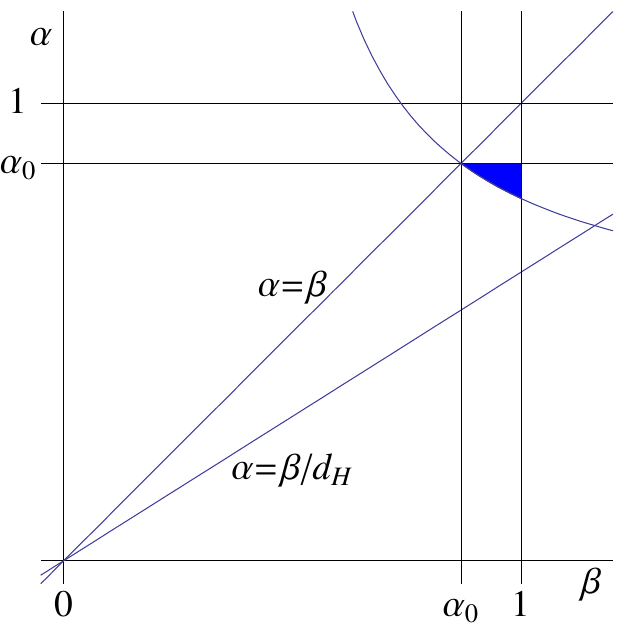,height=4.0in
	 }
	  \caption{The $(\b,\a)$ plane}
	 \label{GammaAlpha}
     \end{figure}

\newpage

\appendix

\section{Estimates on the Clausen function}

%\subsection{Polylogarithms}

According to (\cite{Lewin}, p. 236, \cite{Erdelyi} section 1.11) the analytic extension of the polylogarithm function of order $s$  on the whole complex plane with the line $[1,+\infty)$ removed is given by
\begin{align*}
	\Li_s(z) = - \frac{z\G(1-s)}{2\pi i}\, \int_\g \frac{(-t)^{s-1}}{e^t-z}\, dt,
\end{align*}
where $\g$ is a path as in figure \ref{FigPath}.
% figura
     \begin{figure}[ht]
 	 \centering
	 \psfig{file=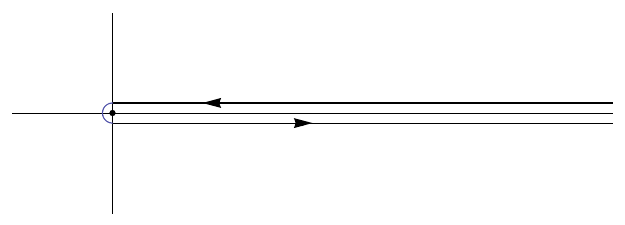,height=1.2in}
	 \caption{Path used for the analytic extension of polylogarithm.}
	 \label{FigPath}
     \end{figure}
%If $\re s \in (0,1)$, we have $\lim_{z\to 1} (1-z)^{1-s} \Li_s(z) = \G(1-s)$.

Therefore the  Clausen cosine function $\Ci_s(t)$ can be defined as
\begin{align*}
	\Ci_s(\th) = - \re \frac{\G(1-s)}{2\pi i}\, \int_\g \frac{(-t)^{s-1}}{e^{t-i\th}-1}\, dt,
\end{align*}

\begin{lem}\label{Clausen-estimate}
When $\re s<1$, $\displaystyle\lim_{t\to0^\pm}|t|^{1-s}\Li_s(e^{it})=\Gamma(1-s)e^{\pm i\pi(1-s)/2}$, as a consequence, for $\a\in(0,1]$,
$$
\lim_{t\to0}|t|^{1+2\a}\Ci_{-2\a}(t)=-\Gamma(1+2\a)\sin\pi\a.
$$
Moreover, when $\a\in[\frac12,1)$ and $|t|\leq\frac\pi4$, $\Ci_{-2\a}$ is strictly negative, and
\begin{align}
&|\Ci_{-2\a}(t)+\Gamma(1+2\a)\sin\pi\a\,|t|^{-(2\a+1)}|
\leq\frac{31}{2\pi^2}\Gamma(1+2\a)\sin\pi\a,\label{eqClausen1}\\
&\frac1{32}\sin(\pi\a)\Gamma(1+2\a)
\leq-\Ci_{-2\a}(t)|t|^{2\a+1}
\leq \frac{63}{32}\sin(\pi\a)\Gamma(1+2\a).\label{eqClausen2}
\end{align}
Finally, when $|t|\geq\pi/4$,
$$
|\Ci_{-2\a}(t)|\,|t|^{2\a+1}\leq23.
$$
\end{lem}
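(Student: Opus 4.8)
The backbone of the whole lemma is the classical Lerch--Hurwitz expansion of the polylogarithm near $z=1$: writing $z=e^{\mu}$, for $0<|\mu|<2\pi$ and $s\notin\{1,2,\dots\}$ one has the convergent identity
\begin{equation*}
\Li_s(e^{\mu})=\G(1-s)(-\mu)^{s-1}+\sum_{k\ge0}\frac{\z(s-k)}{k!}\,\mu^{k},
\end{equation*}
obtainable from the integral representation recalled above by collecting the contribution of the pole $e^{t}=z$ as it approaches the origin (equivalently, from the Hurwitz formula). The plan is to record this, specialize to $\mu=it$ with $|t|<2\pi$, and use the principal branch. Since $\re(s-1)<0$ the singular term dominates as $t\to0$ while the series tends to $\z(s)$; moreover $(-it)^{s-1}=|t|^{s-1}e^{\pm i\pi(1-s)/2}$ as $t\to0^{\pm}$, so multiplying by $|t|^{1-s}$ gives the first limit at once. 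Putting $s=-2\a$, so $1-s=1+2\a$, and taking real parts yields $|t|^{1+2\a}\Ci_{-2\a}(t)\to\G(1+2\a)\cos\!\big(\tfrac{\pi}{2}(1+2\a)\big)=-\G(1+2\a)\sin\pi\a$, the stated consequence.

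For \eqref{eqClausen1} I would keep the expansion as an exact identity on $|t|\le\pi/4$ and separate the singular term, whose real part is exactly $-\G(1+2\a)\sin(\pi\a)\,|t|^{-(2\a+1)}$. Thus the quantity to be bounded is the remainder
\begin{equation*}
R(t):=\Ci_{-2\a}(t)+\G(1+2\a)\sin(\pi\a)\,|t|^{-(2\a+1)}=\sum_{m\ge0}\frac{(-1)^{m}\z(-2\a-2m)}{(2m)!}\,t^{2m},
\end{equation*}
the odd powers dropping out under $\re$. I would convert the negative zeta values by the functional equation $\z(w)=2^{w}\pi^{w-1}\sin(\tfrac{\pi w}{2})\G(1-w)\z(1-w)$, obtaining $|\z(-2\a-2m)|=(2\pi)^{-2\a-2m}\pi^{-1}\sin(\pi\a)\,\G(1+2\a+2m)\z(1+2\a+2m)$, and then bound the series uniformly: for $\a\in[\tfrac12,1)$ one has $1\le\z(1+2\a+2m)\le\z(2)$, the ratio $\G(1+2\a+2m)/(2m)!$ is at most $\G(1+2\a)$ times a polynomial in $m$, and $(|t|/2\pi)^{2m}\le(1/64)^{m}$ gives geometric decay. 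Summing produces $|R(t)|\le\frac{31}{2\pi^{2}}\G(1+2\a)\sin(\pi\a)$.

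Estimate \eqref{eqClausen2} is then formal, and this is where the constant $31/(2\pi^{2})$ is engineered. From the definition of $R$,
\begin{equation*}
-\Ci_{-2\a}(t)\,|t|^{2\a+1}=\G(1+2\a)\sin(\pi\a)-R(t)\,|t|^{2\a+1};
\end{equation*}
as $\pi/4<1$ and $2\a+1\in[2,3)$ we have $|t|^{2\a+1}\le(\pi/4)^{2}=\pi^{2}/16$, so $|R(t)|\,|t|^{2\a+1}\le\frac{31}{32}\G(1+2\a)\sin(\pi\a)$, and the two-sided bound with constants $\frac1{32}$ and $\frac{63}{32}$ drops out; its positivity also yields the strict negativity of $\Ci_{-2\a}$ on $0<|t|\le\pi/4$. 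For the last estimate ($\pi/4\le|t|\le\pi$) I would reuse the same expansion, still valid since $|t|<2\pi$: now after multiplying by $|t|^{2\a+1}$ the singular term contributes $\G(1+2\a)\sin(\pi\a)\le2$, while the remainder series, bounded as before but with the weaker factor $(|t|/2\pi)^{2m}\le(1/4)^{m}$ and then multiplied by $|t|^{2\a+1}\le\pi^{3}$, stays within the remaining budget, giving $|\Ci_{-2\a}(t)|\,|t|^{2\a+1}\le23$.

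The only genuinely delicate point will be the uniform control of the remainder series, both in \eqref{eqClausen1} and on $|t|\ge\pi/4$: everything reduces to bounding $\sum_{m}\frac{\G(1+2\a+2m)}{(2m)!}\,\z(1+2\a+2m)\,(|t|/2\pi)^{2m}$ by an explicit multiple of $\G(1+2\a)$, uniformly over $\a\in[\tfrac12,1)$, for which the functional equation together with the factorial/polynomial comparison are the essential tools. The numerical constants $31/(2\pi^{2})$ and $23$ are then merely the outcome of pushing these estimates through on the two ranges of $|t|$.
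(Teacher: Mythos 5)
Your proof is correct on the range it treats, and it takes a genuinely different route from the paper's. The paper never expands in a series: it deforms the Hankel contour in the integral representation of $\Li_s$, picks up the residue at the unique pole inside (which produces the singular term $\G(1-s)e^{\pm i\pi(1-s)/2}|t|^{s-1}$ exactly), and then bounds the leftover contour integral --- a circle of radius $\pi$ (or $\l|t|$ in the last part) plus two half-lines --- by direct estimates. The delicate point in that approach is that the remainder bound must be proportional to $\sin(\pi\a)$, which vanishes as $\a\to1^-$; the paper manufactures this factor by an interpolation trick, writing the circle integral $\psi(\a,t)$ as $\int_\a^1\partial_s\psi(s,t)\,ds$ (it vanishes at $\a=1$) so as to extract a factor $1-\a\le\tfrac12\sin(\pi\a)$. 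Your route --- the Lerch expansion plus the functional equation, which gives $(-1)^m\z(-2\a-2m)=-(2\pi)^{-2\a-2m}\pi^{-1}\sin(\pi\a)\,\G(1+2\a+2m)\,\z(1+2\a+2m)$ --- sidesteps this difficulty entirely: every term of the remainder $R(t)$ automatically carries the factor $\sin(\pi\a)$, so uniformity as $\a\to1^-$ is built in. Moreover all terms of $R(t)$ have the same (negative) sign, so the strict negativity of $\Ci_{-2\a}$ on $0<|t|\le\pi/4$ is immediate, and your constants come out far smaller than required: with $\G(1+2\a+2m)/[(2m)!\,\G(1+2\a)]=\prod_{j=1}^{2m}(1+2\a/j)\le\binom{2m+2}{2}$, $\z(1+2\a+2m)\le\z(2)$, and $\sum_m\binom{2m+2}{2}64^{-m}\approx1.1$, one gets $|R(t)|\le\frac{2}{2\pi^2}\G(1+2\a)\sin(\pi\a)$, an order of magnitude below $\frac{31}{2\pi^2}\G(1+2\a)\sin(\pi\a)$. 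The deduction of \eqref{eqClausen2} from \eqref{eqClausen1}, and of strict negativity from the lower bound, is the same formal step as in the paper.

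One caveat concerns the final bound $|\Ci_{-2\a}(t)|\,|t|^{2\a+1}\le23$. Your estimate of the remainder there again uses $\z(1+2\a+2m)\le\z(2)$, hence needs $\a\ge\tfrac12$; but the paper's proof of this part imposes no restriction on $\a$, and the bound is invoked in Proposition \ref{Prop:Holder} $(i)$ for arbitrary $\a\in(0,1)$, so you should cover $\a<\tfrac12$ as well. The fix is one line: only the $m=0$ term is affected, and there $\sin(\pi\a)\,\z(1+2\a)\le\pi\a\cdot\frac{1+2\a}{2\a}\le\tfrac{3\pi}{2}$ (using $\z(s)\le s/(s-1)$ for $s>1$), while for $m\ge1$ one has $1+2\a+2m\ge3$ and hence $\z(1+2\a+2m)\le\z(3)$. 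With these replacements your budget (singular term at most $2$, remainder then at most roughly $17$) still closes comfortably below $23$ for every $\a\in(0,1]$.
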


\begin{proof}
Let $0<|\th|\leq\pi/4$. Then we may choose $\g$ in figure \ref{FigPath} as  $\g_0-\s$ where $\g_0$ is made of the half lines $\sqrt{\pi^2-\eps^2}+t\pm i\eps$, $t>0$, and (most of) the circle of radius $\pi$ centered at the origin, and $\s$ is a suitably small positively oriented cycle surrounding the point $i\th$. Then
\begin{align*}
\Li_s(e^{i\th})
&= - \frac{e^{i\th}\G(1-s)}{2\pi i}\, \int_{\g_0} \frac{(-t)^{s-1}}{e^t-e^{i\th}}\, dt
+ \frac{e^{i\th}\G(1-s)}{2\pi i}\, \int_{\s} \frac{(-t)^{s-1}}{e^t-e^{i\th}}\, dt\\
&= - \frac{\G(1-s)}{2\pi i}\, \int_{\g_0} \frac{(-t)^{s-1}}{e^{(t-i\th)}-1}\, dt
+ \G(1-s)\, {\rm Res}_{t=i\th} \frac{(-t)^{s-1}}{e^{(t-i\th)}-1}\\
&= - \frac{\G(1-s)}{2\pi i}\, \int_{\g_0} \frac{(-t)^{s-1}}{e^{(t-i\th)}-1}\, dt
+ \Gamma(1-s)e^{i\sgn(\th)\pi(1-s)/2}|\th|^{s-1}.
\end{align*}
In particular,
\begin{align*}
\Ci_{-2\a}(\th)
&= \re \Li_{-2\a}(e^{i\th})
=- \frac{\G(1+2\a)}{2\pi}\,\im \left(\int_{\g_0} \frac{(-t)^{-(1+2\a)}}{e^{(t-i\th)}-1}\, dt\right)
- \Gamma(1+2\a)\sin\pi\a\ |\th|^{-(2\a+1)},
\end{align*}
from which the first relations hold. Moreover,
\begin{align*}
-\Ci_{-2\a}(\th)
&=  \Gamma(1+2\a)\sin\a\pi\ |\th|^{-2\a-1}
+\frac{\G(1+2\a)}{2\pi}\,\im \left(\int_{\g_0} \frac{(-t)^{-(1+2\a)}}{e^{(t-i\th)}-1}\, dt\right),
\end{align*}
hence
\begin{align*}
|\Ci_{-2\a}(\th)+\Gamma(1+2\a)\sin\a\pi\ |\th|^{-2\a-1}|
&=  \frac{\G(1+2\a)}{2\pi}\,\left|\im \left(\int_{\g_0} \frac{(-t)^{-(1+2\a)}}{e^{(t-i\th)}-1}\, dt\right)\right|.
\end{align*}
We now assume $\a\geq\frac12$, and observe that the part of the path constituted by the half lines $\sqrt{\pi^2-\eps^2}+t\pm i\eps$, $t>0$ is invariant under reflection w.r.t. to the real axis, which sends the variable of integration to its conjugate. Therefore,
\begin{align*}
\int_{\g_0} \frac{(-t)^{-(1+2\a)}}{e^{(t-i\th)}-1}\, dt
&=
\int_{|z|=\pi} \frac{(-z)^{-(1+2\a)}}{e^{(z-i\th)}-1}\, dz
+
2i\sin(2\pi\a)\int_{\pi}^\infty \frac{t^{-(1+2\a)}}{e^{(t-i\th)}-1}\, dt\ .
\end{align*}
As for the second integral, we have
\begin{align}
\bigg| 2\sin(2\pi\a)\ &\im\left(i\int_{\pi}^\infty \frac{t^{-(1+2\a)}}{e^{(t-i\th)}-1}\, dt \right) \bigg|\notag\\
&=
2|\sin(2\pi\a)|\ \bigg|\int_{\pi}^\infty \frac{
\big(e^{t}\cos\th -1\big)
t^{-(1+2\a)}
}
{|e^{(t-i\th)}-1|^2}\, dt \bigg|\notag\\
&\leq
2|\sin(2\pi\a)|\,\pi^{-(1+2\a)}\int_{\pi}^\infty \frac{
e^{t}
}
{e^{2t}+1-2e^t\cos\th}\, dt\notag\\
&\leq
2|\sin(2\pi\a)|\,\pi^{-(1+2\a)}\int_{\pi}^\infty \frac{
e^{t}
}
{(e^{t}-1)^2}\, dt\notag\\
&\leq
4|\sin(\pi\a)|\,\pi^{-(1+2\a)}(e^\pi-1)^{-1},\label{eq:A}
\end{align}
where in the first inequality we used $|\th|\leq\frac\pi4$, which implies $|e^{t}\cos\th -1|\leq e^t$ for $t\geq\pi$.
We now come to the first integral. Let us observe that, when $\a\in\bz$, it is a contour integral of a meromorphic function, therefore it may be computed via residues. In particular, when $\a\ne0$, the only residue comes from $z=i\th$, whose real part vanishes, as shown above. To get an estimate which is small for $\a$ close to 1, we set
$$
\psi(\a,\th)=\im\left(  \int_{|z|=\pi} \frac{(-z)^{-(1+2\a)}}{e^{(z-i\th)}-1}\, dz\right),
$$
so that we have
\begin{equation}\label{eq:B1}
|\psi(\a,\th)|=\left|\int_\a^1\frac{\partial\psi}{\partial\a}(s,\th)ds\right|
\leq
\int_\a^1\left|\frac{\partial\psi}{\partial\a}(s,\th)\right|ds \leq
(1-\a)\sup_{\a\leq s\leq1}\left|\frac{\partial\psi}{\partial \a}(s,\th)\right|\,.
\end{equation}
Moreover,
\begin{align*}
|\partial_\a\psi(s,\th)|
&=\left|
\im\left(  \int_{|z|=\pi}-2\log(-z) \frac{(-z)^{-(1+2s)}}{e^{(z-i\th)}-1}\, dz\right)
\right|
\\
&=\left|
\im\left(
\int_{0}^{2\pi}  -2 (\log \pi + i(t-\pi) )\pi^{-2s}
\frac{e^{-i(t-\pi)(1+2s)}}{e^{(\pi e^{it}-i\th)}-1}
i\,e^{it}\, dt\right)\right|\\
&\leq4\pi^{1-2\a} (\log\pi + \pi)
\left(\min_{t\in[0,2\pi]} | e^{(\pi e^{it}-i\th)}-1 |^2\right)^{-1/2}\, ,\quad \a\leq s\leq1.
\end{align*}
We now consider the two cases $\pi|\sin t|\leq |\th|+\pi/2$, and $\pi|\sin t|\geq |\th|+\pi/2$.
\\
If $\pi|\sin t|\leq |\th|+\pi/2$, $|\cos t|\geq(1-(|\th|/\pi+1/2)^2)^{1/2}$, and
\begin{align*}
| e^{(\pi e^{it}-i\th)}-1 |^2
&=e^{2\pi\cos t} + 1 - 2e^{\pi\cos t}\cos(\pi\sin t - \th)\\
&\geq (e^{\pi\cos t} - 1)^2
\geq (1-e^{-(\pi^2-(|\th|+\pi/2)^2)^{1/2}} )^2.
\end{align*}
If $\pi|\sin t|\geq |\th|+\pi/2$, $\frac32\pi\geq|\pi\sin t - \th|\geq|\pi\sin t| - |\th|\geq\pi/2$, therefore $\cos(\pi\sin t - \th)\leq0$, and
\begin{align*}
| e^{(e^{it}-i\th)}-1 |^2
&=e^{2\pi\cos t} + 1 - 2e^{\pi\cos t}\cos(\pi\sin t - \th)\geq1.
\end{align*}
We have proved that
\begin{equation}\label{eq:B2}
|\psi(\a,\th)|\leq  4(1-\a)\pi^{1-2\a} (\log\pi + \pi)(1-e^{-(\pi^2-(|\th|+\pi/2)^2)^{1/2}} )^{-1}\,,
\end{equation}
hence, by inequalities \eqref{eq:A}, \eqref{eq:B1}, \eqref{eq:B2}, and since $\a\geq1/2$ implies $2(1-\a)\leq \sin\pi\a$,
\begin{align}\label{eq:C}
\left|\im \left(\int_{\g_0} \frac{(-t)^{-(1+2\a)}}{e^{(t-i\th)}-1}\, dt\right)\right|
&\leq \frac{2\sin(\pi\a)}{\pi^{(1+2\a)}}\Big(
\frac2{e^\pi-1}
+
\frac{\pi^2(\log\pi + \pi)}{1-e^{-(\pi^2-(|\th|+\pi/2)^2)^{1/2}} }
 \Big)\,.
\end{align}
Then,
\begin{align*}
\left|\frac{\Ci_{-2\a}(\th)|\th|^{2\a+1}}{\sin(\pi\a)\Gamma(1+2\a)}+1\right|
&\leq  \frac{|\th|^{2\a+1}}{2\pi\sin(\pi\a)}\,\left|\im \left(\int_{\g_0} \frac{(-t)^{-(1+2\a)}}{e^{(t-i\th)}-1}\, dt\right)\right|
\leq \left(\frac{|\th|}\pi\right)^{2\a+1}h\left(\frac{|\th|}\pi\right),
\end{align*}
where the function $h(r)$, $r\in(0,1/4]$ is given by
$$
h(r)=\frac{ \pi(\log\pi + \pi)}{1-\exp\left(-\pi\sqrt{1-\left(r+1/2\right)^2}\right) }
+\frac{2}{\pi(e^\pi-1)}.
$$
Since $h$ is increasing, it attains its maximum for $r=\frac14$, where $h(\frac14)<\frac{31}2$. Hence,
\begin{equation}
\left(1-\frac{31}2\left(\frac{|\th|}{\pi}\right)^{2\a+1}\right)
\leq\frac{-\Ci_{-2\a}(\th)|\th|^{2\a+1}}{\sin(\pi\a)\Gamma(1+2\a)}
\leq \left(1+\frac{31}2\left(\frac{|\th|}{\pi}\right)^{2\a+1}\right),
\end{equation}
which implies \eqref{eqClausen1}.
Now, since $|\th|\leq\pi/4$ and $\a\geq1/2$, we get
 $\frac{31}2(|\th|/\pi)^{2\a+1}\leq \frac{31}{32}$, hence
$$
\frac1{32}\sin(\pi\a)\Gamma(1+2\a)
\leq-\Ci_{-2\a}(\th)|\th|^{2\a+1}
\leq \frac{63}{32}\sin(\pi\a)\Gamma(1+2\a),
$$
showing in particular that $-\Ci_{-2\a}(\th)$ is strictly positive for $|\th|\leq\pi/4$.

\bigskip

We finally estimate $|\Ci_{-2\a}(\th)|$ for $|\th|\geq\frac\pi4$. We simply choose the contour $\g$ as the circle of radius $\l |\th|$ around the origin and the half lines $\sqrt{ \l^2\th^2-\eps^2}+t\pm i\eps$, $t>0$, for $\frac12<\l<1$. As for the first integral, we get
$$
\left|  \int_{|z|=\l|\th|} \frac{(-z)^{-(1+2\a)}}{e^{(z-i\th)}-1}\, dz\right|
\leq2\pi\l|\th|(\l|\th|)^{-(2\a+1)}
\left(\min_{t\in[0,2\pi]} | e^{ \l |\th| e^{it}-i\th }-1 |^2\right)^{-1/2}\,.
$$
Since $|\l\sin t - 1|\,|\th|\geq (1-\l)\,|\th|$, we get $\cos((\l\sin t - 1)\th)\leq\cos((1-\l)\,\th)$, therefore
\begin{align*}
| e^{\l |\th| e^{it}-i\th}-1 |^2
&=
e^{2\l |\th| \cos t}+1-2e^{\l |\th| \cos t}\cos((\l\sin t-1)\th)\\
&\geq
e^{2\l |\th| \cos t}+1-2e^{\l |\th| \cos t}\cos((1-\l)\th)\\
&\geq\sin^2[(1-\l)\,|\th|].
\end{align*}
As a consequence,
$$
\left|  \int_{|z|=\l|\th|} \frac{(-z)^{-(1+2\a)}}{e^{(z-i\th)}-1}\, dz\right|
\leq (\l|\th|)^{-(2\a+1)}
\frac{2\pi\l |\th|} {\sin[(1-\l)\,|\th|]}
\leq (\l|\th|)^{-(2\a+1)}
\frac{2\l\pi^2} {\sin[(1-\l)\pi]}
$$
The second integral is estimated, as above, by
\begin{align*}
\bigg| 2i\sin(2\pi\a)\ & \int_{\l |\th|}^\infty \frac{ t^{-(1+2\a)} }{ e^{(t-i\th)}-1 }\, dt  \bigg|
\leq 4\sin(\pi\a)(\l|\th|)^{-(1+2\a)} \int_{\l |\th|}^\infty \frac{e^t +1}{(e^t-1)^2}\, dt\\
& \leq 8\sin(\pi\a)(\l|\th|)^{-(1+2\a)} (e^{\l|\th|} -1)^{-1}
\leq 8(\l|\th|)^{-(1+2\a)} (e^{\frac{\l\pi}{4}} -1)^{-1}
\end{align*}
whence
\begin{align*}
|\Ci_{-2\a}(\th)|
\leq
\frac{\G(1+2\a)}{2\pi}(\l|\th|)^{-(2\a+1)}
\left(\frac{2\l\pi^2} {\sin[(1-\l)\pi]}+\frac8{ e^{ \frac{\l\pi}{4} } -1 }     \right).
\end{align*}
Finally, for any $\l\in(0,1)$,
\begin{align*}
|\Ci_{-2\a}(\th)|\,|\th|^{2\a+1}
&\leq \frac{\G(1+2\a)}{2\pi}\l^{-(2\a+1)}
\left(\frac{2\l\pi^2} {\sin[(1-\l)\pi]}+\frac8{ e^{ \frac{\l\pi}{4} } -1 }     \right)
\\
&\leq \l^{-3}
\left(\frac{2\l\pi} {\sin[(1-\l)\pi]}+\frac8{\pi( e^{ \frac{\l\pi}{4} } -1 )}     \right).
\end{align*}
Suitably choosing $\l$, one gets $|\Ci_{-2\a}(\th)|\,|\th|^{2\a+1}<23$.
\end{proof}

\begin{prop}\label{Prop:pairing}
Let $f$ be an even $\cc^\infty$ function on $\T$ vanishing in $0$. Then, for $\a\in(0,1)$,
$$
\int_{-\pi}^\pi\Ci_{-2\a}(\th)f(\th)\,d\th=\pi (\{k^{2\a}\},\{f_k\})_{\ell^2(\bn)},
$$
where $f_k=\frac1{\pi}\int_{-\pi}^{\pi}\cos(k\th)f(\th)\,d\th$.
\end{prop}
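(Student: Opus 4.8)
The identity is, at a purely formal level, nothing but the term-by-term evaluation
\[
\int_{-\pi}^\pi \Ci_{-2\a}(\th)\, f(\th)\, d\th \;=\; \sum_{k\ge1} k^{2\a}\int_{-\pi}^\pi \cos(k\th)\, f(\th)\, d\th \;=\; \pi \sum_{k\ge1} k^{2\a} f_k,
\]
using the defining series $\Ci_{-2\a}=\sum_k k^{2\a}\cos(k\th)$; the whole content is that this series diverges, so the interchange must be justified. The plan is to integrate by parts repeatedly, trading each derivative of the smooth $f$ against an antiderivative of the Clausen function and thereby raising its index until the series converges uniformly. Writing $\mathrm{Si}_s(\th):=\sum_{k\ge1}k^{-s}\sin(k\th)$ for the Clausen sine function (extended, like $\Ci_s$, as $\im\Li_s(e^{i\th})$ when $\re s\le1$), the basic tool is the pair of relations $\de_\th\Ci_s=-\mathrm{Si}_{s-1}$ and $\de_\th\mathrm{Si}_s=\Ci_{s-1}$, equivalently $\int\Ci_s=\mathrm{Si}_{s+1}$ and $\int\mathrm{Si}_s=-\Ci_{s+1}$; these follow from the standard $z\,\de_z\Li_s=\Li_{s-1}$ and hold for all $s$ by analyticity in $s$.

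Since $\Ci_{-2\a}$ and $f$ are both even, I would first reduce to $\int_{-\pi}^\pi\Ci_{-2\a}f=2\int_0^\pi\Ci_{-2\a}f$, the latter read as $\lim_{\eps\to0}\int_\eps^\pi$; the integral converges absolutely because $\Ci_{-2\a}(\th)=O(|\th|^{-(2\a+1)})$ near $0$ by Lemma \ref{Clausen-estimate}, while $f(\th)=O(\th^2)$ there. Fixing an integer $m$ with $2m-2\a>1$ (so $m=1$ for $\a<\tfrac12$, $m=2$ otherwise) and integrating by parts $2m$ times on $[\eps,\pi]$, I expect to obtain
\[
\int_0^\pi\Ci_{-2\a}(\th)\,f(\th)\,d\th=(-1)^m\int_0^\pi\Ci_{2m-2\a}(\th)\,f^{(2m)}(\th)\,d\th,
\]
provided all boundary contributions vanish in the limit.

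The main obstacle is the control of these boundary terms, which arise both at the interior singularity $\th=\eps\to0$ and at the endpoint $\th=\pi$. At step $j$ the boundary term has the form $C_{j-2\a}(\th)\,f^{(j-1)}(\th)$, where $C_{j-2\a}=\mathrm{Si}_{j-2\a}$ for $j$ odd and $C_{j-2\a}=\Ci_{j-2\a}$ for $j$ even. At $\th=\pi$ the sine--Clausen satisfies $\mathrm{Si}_{j-2\a}(\pi)=0$ (odd $j$), while for even $j$ the factor $f^{(j-1)}(\pi)$ vanishes since $f^{(j-1)}$ is odd and $2\pi$-periodic; so all endpoint contributions vanish. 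At $\th=\eps$, when $\re(j-2\a)<1$ Lemma \ref{Clausen-estimate} gives the Clausen factor $O(\eps^{\,j-2\a-1})$, and against $f^{(j-1)}(\eps)=O(\eps^{2})$ ($j=1$) or $O(\eps)$ ($j=2$) this is $O(\eps^{\,2-2\a})\to0$; when $\re(j-2\a)\ge1$ the Clausen factor is continuous at $0$, and the product still tends to $0$ because either $\mathrm{Si}_{j-2\a}(0)=0$ (odd $j$) or $f^{(j-1)}(0)=0$ (even $j$, where $f^{(j-1)}$ is odd). Hence every boundary term vanishes, for each $\a\in(0,1)$.

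Finally, since $2m-2\a>1$, the series $\Ci_{2m-2\a}(\th)=\sum_k k^{-(2m-2\a)}\cos(k\th)$ converges uniformly, so term-by-term integration is legitimate; integrating $\cos(k\th)f^{(2m)}(\th)$ by parts $2m$ times (now a smooth integrand, with all boundary terms killed by $2\pi$-periodicity) replaces $f^{(2m)}$ by $(-1)^m k^{2m}f$, and reassembling yields $2\cdot\tfrac12\,\pi\sum_k k^{2\a}f_k=\pi\sum_k k^{2\a}f_k$, the series converging absolutely because $f_k$ decays faster than any power of $k$. An alternative route would be to analytically continue both sides of the term-by-term identity from the region $\re s>1$, but that would require uniform-in-$s$ bounds on $\Ci_s$ near $0$, whereas the integration-by-parts argument uses only the real-index asymptotics already recorded in Lemma \ref{Clausen-estimate}.
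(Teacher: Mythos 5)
Your proof is correct, but it follows a genuinely different route from the paper. The paper regularizes radially (Abel summation): it sets $\Ci(s,\r,\th)=\re\Li_s(\r e^{i\th})$, proves a uniform bound $\sup_{\r\in[0,1],|\th|\le\pi}|\Ci(-2\a,\r,\th)|\th|^{2\a+1}|<\infty$ by extending the argument of Lemma \ref{Clausen-estimate}, applies dominated convergence (using the order-two zero of $f$ at $0$, exactly as you do), and then evaluates $\int_{-\pi}^{\pi}\Ci(-2\a,\r,\th)f(\th)\,d\th$ for $\r<1$ as a contour integral $\int_{|z|=\r}$, where the convergent series $\Li_{-2\a}(z)=\sum_n n^{2\a}z^n$ and the Fourier series of $f$ can be multiplied and integrated term by term, giving $\pi\sum_k\r^k k^{2\a}f_k\to\pi\sum_k k^{2\a}f_k$. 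You instead keep $\r=1$ and regularize in $\th$, integrating by parts $2m$ times via the ladder $\de_\th\Ci_s=-\mathrm{Si}_{s-1}$, $\de_\th\mathrm{Si}_s=\Ci_{s-1}$ until the Clausen index exceeds $1$ and the defining series converges uniformly. What each buys: the paper's route is shorter once the uniform-in-$\r$ estimate is granted (and the paper only sketches that estimate, citing the proof of Lemma \ref{Clausen-estimate}); your route needs only the fixed-index boundary asymptotics actually stated in that lemma, at the price of a careful bookkeeping of boundary terms.

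Three small points in your write-up should be tightened. First, in the borderline case $\a=\tfrac12$, $j=2$, the factor $\Ci_{2-2\a}=\Ci_1(\th)=-\log|2\sin(\th/2)|$ is \emph{not} continuous at $0$, contrary to your claim for $\re(j-2\a)\ge1$; the boundary term still vanishes because $f'(\eps)=O(\eps)$ beats the logarithm, but the justification must be the log bound, not continuity. Second, the vanishing $\mathrm{Si}_{1-2\a}(\pi)=0$ is not a convergent-series statement (the index is below $1$); it needs the reflection property $\Li_s(\bar z)=\overline{\Li_s(z)}$ for real $s$ (equivalently, $\mathrm{Si}_s$ is odd and $2\pi$-periodic, so $\mathrm{Si}_s(\pi)=-\mathrm{Si}_s(\pi)$). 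Third, in the final step on $[0,\pi]$ the boundary terms are not killed by periodicity alone: you need parity as well ($\sin k\th$ vanishes at $0$ and $\pi$, and the odd-order derivatives $f^{(2j+1)}$, being odd and $2\pi$-periodic, vanish at both endpoints). With these repairs your argument is complete.
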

\begin{proof}
Let us set $\Ci(s,\r,\th):=\re (\Li_s(\r e^{i\th}))$. Then, reasoning as in the proof of Lemma \ref{Clausen-estimate}, it is not difficult to show that
\begin{equation}\label{unifEstimate}
\sup_{\r\in[0,1],|\th|\leq\pi}|\Ci(-2\a,\r,\th)|\th|^{2\a+1}|<\infty.
\end{equation}
We may assume that $f$ is real valued, namely $f(\th)=\sum_{k\geq0}f_k\cos(k\th)$, with $f_k\in\br$. The other properties of $f$ amount to $f_k$ rapidly decreasing and $\sum_kf_k=0$. Since $f$ is even, it has indeed a zero of order 2 in $\th=0$, hence, by \eqref{unifEstimate}, $\Ci(-2\a,\r,\th)f(\th)$ is uniformly $L^1(\th)$, for $\r\in[0,1]$. By dominated convergence,
\begin{align*}
\int_{-\pi}^\pi\Ci_{-2\a}(\th)f(\th)\,d\th
&=\lim_{\r\to1}\int_{-\pi}^\pi\Ci(-2\a,\r,\th)f(\th)\,d\th\\
&=\lim_{\r\to1}\re\left(
-i \int_{|z|=\r}\Li_{-2\a}(z)\left(\frac12\sum_{k=0}^\infty f_k(\r^{-k}z^k+\r^k z^{-k})\right)\,\frac{dz}{z}
\right)\\
&=\lim_{\r\to1}\re\left(
\frac{-i}2\sum_{k=0}^\infty f_k\r^k\sum_{n=1}^\infty n^{2\a}\int_{|z|=\r}z^{n-k}\,\frac{dz}{z}
\right)\\
&=\pi\lim_{\r\to1}\sum_{k=0}^\infty \r^k f_k\ k^{2\a}
=\pi\sum_{k=0}^\infty f_k\ k^{2\a}.
\end{align*}
\end{proof}

\begin{prop}\label{Prop:Holder}
Let $\a\in(0,1)$, and consider  the seminorm $\pa(f)$, $ f\in C(\T)$, given by
\[
\pa(f)^2 = \frac1{2\pi}\, \sup_{x\in\T}\int_\T \,\f_{\a}(x-y)|f(x)-f(y)|^2 dy<+\infty,
\]
where $\f_\a(t)=- 2\pi\Ci_{-2\a}(t)$, and denote by $\|f\|_{0,\a} = \displaystyle\sup_{x,y}\frac{|f(x)-f(y)|}{d(x,y)^\a}$ the H\"older seminorm.
Then,
\item[$(i)$] $\forall\eps>0$,
$\pa(f)\leq c_\eps\|f\|_{0,\a+\eps}$, where  $c_\eps=  \frac{1}{\sqrt{\eps} } \big( \frac\pi4 \big)^{\eps}  \left( 4 + 23(4^{2\eps}-1)\right)^{1/2}$,

\item[$(ii)$] for $\a\geq \frac12$,
%\forall\eps>0\ \exists\, c,c_\eps>0:
$\tilde{c}_\a \|f\|_{0,\a}\leq \pa(f)$,
where
$\tilde{c}_\a=\frac{\sqrt{3\sin(\pi\a)}}{16\sqrt2}$.
%As a consequence, $C^{0,\a+\eps}(\T) \subset \mathcal{A}_\alpha\subset C^{0,\a}(\T)$.
\end{prop}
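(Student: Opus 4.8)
The plan is to control the singular kernel $\f_\a=-2\pi\Ci_{-2\a}$ occurring in $\pa$ against powers of $|t|$ using the estimates collected in Lemma~\ref{Clausen-estimate}, exploiting that $\Ci_{-2\a}<0$ so that $\f_\a>0$ and $-\Ci_{-2\a}=|\Ci_{-2\a}|$, and reducing everything to a one–variable integral by translation invariance of the Clausen kernel on $\T$.

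For $(i)$ I would insert the H\"older bound $|f(x)-f(y)|\le \norm{f}_{0,\a+\eps}\,d(x,y)^{\a+\eps}$ into the definition of $\pa(f)^2$. Since $\f_\a(x-y)$ depends only on $x-y$, the supremum over $x$ disappears after the substitution $t=x-y$, leaving
\[
\pa(f)^2\le \norm{f}_{0,\a+\eps}^2\int_{-\pi}^{\pi}\bigl(-\Ci_{-2\a}(t)\bigr)\,|t|^{2(\a+\eps)}\,dt .
\]
I then split the integral at $|t|=\pi/4$. On the inner region I use $-\Ci_{-2\a}(t)|t|^{2\a+1}\le \tfrac{63}{32}\sin(\pi\a)\Gamma(1+2\a)\le 4$ from \eqref{eqClausen2} together with $\sin(\pi\a)\Gamma(1+2\a)\le 2$, so the integrand is $\le 4|t|^{2\eps-1}$, integrable \emph{precisely} because $\eps>0$; this is exactly why one must trade $\a$ for $\a+\eps$, and it produces the factor $\eps^{-1}(\pi/4)^{2\eps}$. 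On the outer region $|t|\ge\pi/4$ I use $|\Ci_{-2\a}(t)|\,|t|^{2\a+1}\le 23$, giving integrand $\le 23|t|^{2\eps-1}$ whose integral over $[\pi/4,\pi]$ contributes $23(4^{2\eps}-1)$ after writing $\pi^{2\eps}=4^{2\eps}(\pi/4)^{2\eps}$. Collecting the two pieces reproduces $c_\eps^2=\eps^{-1}(\pi/4)^{2\eps}\bigl(4+23(4^{2\eps}-1)\bigr)$.

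For the lower bound $(ii)$ I would instead use the lower Clausen estimate $-\Ci_{-2\a}(t)\ge \tfrac1{32}\sin(\pi\a)\Gamma(1+2\a)\,|t|^{-(2\a+1)}$ for $|t|\le\pi/4$, which is where the hypothesis $\a\ge\tfrac12$ enters via \eqref{eqClausen2}. Fix $x,y$ with $r:=d(x,y)$ and bound $\pa(f)^2$ from below by restricting the integral, with base point $x$, to a ball $B(y,\rho)$ of radius $\rho=\mu r$ for a small fixed fraction $\mu$. The $L^2$–triangle inequality gives
\[
|f(x)-f(y)|\Bigl(\int_{B(y,\rho)}\f_\a(x-w)\,dw\Bigr)^{1/2}\le (2\pi)^{1/2}\pa(f)+\Bigl(\int_{B(y,\rho)}\f_\a(x-w)|f(y)-f(w)|^2\,dw\Bigr)^{1/2},
\]
and the last term is controlled by $2\pi\,\pa(f)^2$ times $\sup_{w\in B(y,\rho)}\f_\a(x-w)/\f_\a(y-w)$, which is of order $\mu^{2\a+1}$ by the upper and lower Clausen bounds; choosing $\mu$ small makes this a genuine perturbation and avoids any circular appeal to $\norm{f}_{0,\a}$. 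Since $d(x,w)\asymp r$ on $B(y,\rho)$, the lower Clausen bound yields $\int_{B(y,\rho)}\f_\a(x-w)\,dw\gtrsim \mu\,r^{-2\a}$, so all powers of $r$ cancel and one is left with $|f(x)-f(y)|\le \tilde c_\a^{-1}\pa(f)\,r^{\a}$, with $\tilde c_\a=\sqrt{3\sin(\pi\a)}/(16\sqrt2)$ after optimizing $\mu$ and inserting the explicit constants.

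The main obstacle I expect is twofold. First, the lower Clausen bound is available only on $|t|\le\pi/4$, so pairs with $r=d(x,y)>\pi/4$ must be handled separately: here I would subdivide the geodesic joining $x$ and $y$ into a bounded number of arcs of length $\le\pi/4$ and apply the previous estimate to the arc carrying the largest increment of $f$, at the cost of a controllable multiplicative factor (no such difficulty arises in $(i)$, where only upper bounds on $|\Ci_{-2\a}|$ are needed). Second, the numerical constants must be tracked honestly across the two Clausen regimes to land exactly on $c_\eps$ and $\tilde c_\a$; this book-keeping — in particular the clean bound $\sin(\pi\a)\Gamma(1+2\a)\le 2$ and the optimal choice of $\mu$ and of the ball radius — is the delicate, though essentially routine, part of the argument.
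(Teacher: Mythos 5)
Your part (i) is essentially the paper's own proof: the same insertion of the H\"older bound, the same reduction by translation invariance to $\int_0^\pi \f_\a(t)\,t^{2(\a+\eps)}\,dt$, the same splitting at $|t|=\pi/4$ using the upper bound in \eqref{eqClausen2} (with $\sin(\pi\a)\Gamma(1+2\a)\le 2$) on the inner region and $|\Ci_{-2\a}(t)|\,|t|^{2\a+1}\le 23$ on the outer one, and the bookkeeping does land on $c_\eps$.

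Part (ii) is where the problem lies, and it is quantitative but genuine. The paper's argument is different from yours: for $\s=d(x,y)\le\frac\pi4$ it compares \emph{both} $f(x)$ and $f(y)$ with the mean $\frac1\s\int_{I_\s}f$ over the arc $I_\s$ joining $x$ and $y$, writing
\[
\Big|f(x)-\frac1\s\int_{I_\s}f\Big|\le\frac1\s\int_{I_\s}|f(x)-f(z)|\,\f_\a(x-z)^{1/2}\,\f_\a(x-z)^{-1/2}\,dz
\]
and applying Cauchy--Schwarz: the first factor is $\le\sqrt{2\pi}\,\pa(f)$ by the definition of $\pa$, and the second requires only the \emph{lower} bound in \eqref{eqClausen2}, through $\int_0^\s\f_\a(t)^{-1}dt\le\sup_{0<t\le\pi/4}\frac{t^{1+2\a}}{\f_\a(t)}\cdot\frac{\s^{2+2\a}}{2+2\a}$. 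This yields $|f(x)-f(y)|\le 8\big((1+\a)\sin(\pi\a)\Gamma(1+2\a)\big)^{-1/2}\s^\a\,\pa(f)$, and chaining (factor $4^{1-\a}$) together with $\a\ge\frac12$ gives exactly $\tilde c_\a^{-1}=\frac{16\sqrt2}{\sqrt{3\sin(\pi\a)}}$. No upper Clausen bound, hence no ratio between the two constants of \eqref{eqClausen2}, ever enters.

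Your scheme, by contrast, must exchange the kernel base point via $\theta=\sup_{w\in B(y,\mu r)}\f_\a(x-w)/\f_\a(y-w)$, and with the estimates of Lemma \ref{Clausen-estimate} the best available bound is $\theta\le 63\big(\tfrac{\mu}{1-\mu}\big)^{2\a+1}$, where $63=\frac{63/32}{1/32}$ is the unavoidable ratio between the upper and lower constants in \eqref{eqClausen2}. Tracking your argument, the constant you get before chaining is $\big(1+\sqrt{63}\,(\tfrac{\mu}{1-\mu})^{\a+1/2}\big)\cdot 4(1+\mu)^{\a+1/2}\mu^{-1/2}\big(\sin(\pi\a)\Gamma(1+2\a)\big)^{-1/2}$: taming the $\sqrt{63}$ term forces $\mu$ small, while the factor $\mu^{-1/2}$, coming from your lower bound $\int_{B(y,\mu r)}\f_\a(x-w)\,dw\gtrsim\mu\, r^{-2\a}$, then blows up. At $\a=\tfrac12$ the minimum over $\mu$ of this expression is about $26\,(\sin\pi\a)^{-1/2}$ (near $\mu\approx 0.08$), against the paper's $\approx 6.5\,(\sin\pi\a)^{-1/2}$ at the same stage; moreover your admissible range is $r\le\frac{\pi}{4(1+\mu)}$, so chaining needs five arcs rather than four. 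The discrepancy, roughly a factor of $4$, survives every choice of $\mu$, so your final claim that the constants ``land exactly on $\tilde c_\a$ after optimizing $\mu$'' is false. Your method does prove $c\,\|f\|_{0,\a}\le\pa(f)$ for some explicit $c>0$, which is all the paper uses downstream (bi-Lipschitz equivalence, Lip-norm property), but it does not prove the Proposition as stated with $\tilde c_\a=\frac{\sqrt{3\sin(\pi\a)}}{16\sqrt2}$; to get that constant, replace the ball/kernel-ratio step by the weighted Cauchy--Schwarz comparison with the arc average described above.
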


\begin{proof}
\item[$(i)$] If $f$ is $(\a+\eps)$-H\"older  then
\begin{align*}
\sup_{x\in\T}\int_\T \,\f_{\a}(x-y)|f(x)-f(y)|^2 dy
&\leq
\|f\|^2_{0,\a+\eps}\sup_{x\in\T}\int_\T \f_{\a}(x-y) d(x,y)^{2(\a+\eps)}\, dy\\
&=2\|f\|^2_{0,\a+\eps}\int_{0}^\pi \f_{\a}(t) t^{2(\a+\eps)}\, dt\, .
\end{align*}
Making use of the estimates in Lemma \ref{Clausen-estimate}, one gets
\begin{align*}
\int_{0}^\pi \f_{\a}(t) t^{2(\a+\eps)}\, dt
&=\int_{0}^{\pi/4} \f_{\a}(t) t^{2(\a+\eps)}\, dt+\int_{\pi/4}^\pi \f_{\a}(t) t^{2(\a+\eps)}\, dt\\
&\leq 2\pi\frac{63}{32} \sin(\pi\a)\Gamma(1+2\a)\int_{0}^{\pi/4}  t^{2\eps-1}\, dt
+2\pi \cdot 23\int_{\pi/4}^\pi t^{2\eps-1}\, dt \\
&\leq 4\pi \sin(\pi\a)\Gamma(1+2\a)\frac{(\pi/4)^{2\eps}}{2\eps}
+46\pi\frac{\pi^{2\eps}-(\pi/4)^{2\eps}}{2\eps}\\
&\leq \frac\pi\eps \Big( \frac\pi4 \Big)^{2\eps}
\left(4 + 23(4^{2\eps}-1)\right).
\end{align*}

\item[$(ii)$] Assume $\pa(f)<\infty$,  let $x,y\in\T$, and denote by $\s$ the distance between $x$ and $y$, and by $I_\s$ the arc of length $\s$ with end-points $x$ and $y$.
By Lemma \ref{Clausen-estimate}, $\f_\a(t)>0$ for $|t|\leq\frac\pi4$.

Then, for $\s\leq\frac\pi4$,
\begin{align*}
	\bigg| f(x)-\frac1\s\, \int_{I_\s} f(z)\, dz \bigg|
	& \leq \frac1\s\, \int_{I_\s} | f(x) - f(z)| \, dz
	\\
	& = \s^{-1} \, \int_{I_\s} | f(x) - f(z)|\f_\a(x-z)^{1/2} \f_\a(x-z)^{-1/2} \, dz \\
	& \leq  \s^{-1} \, \pa(f) \sqrt{2\pi} \cdot \bigg( \int_0^\s  \frac{1}{\f_\a(t)t^{1+2\a}}{t^{1+2\a}}\, dt \bigg)^{\frac12}\\
	& \leq (2+2\a)^{-1/2} \bigg( \sup_{0< t\leq\frac\pi4}  \frac{2\pi}{\f_\a(t)t^{1+2\a}} \bigg)^{1/2}\s^{\a}\,  \pa(f)\\
	& \leq (2+2\a)^{-1/2} \bigg(   \frac{32}{  \sin(\pi\a)\Gamma(1+2\a) } \bigg)^{1/2}\s^{\a}\,  \pa(f)\\
	& \leq 4\big((1+\a)\sin(\pi\a)\Gamma(1+2\a)\big)^{-1/2} \,  \s^{\a}\pa(f).
\end{align*}
	Therefore, using the triangle inequality we obtain, for all $x,y$, such that $d(x,y)\leq\frac\pi4$,
	$$
	 \frac{ |f(x)-f(y)|}{|x-y|^{\a}} \leq \frac{8}{\sqrt{(1+\a)\sin(\pi\a)\Gamma(1+2\a)}}\,  \pa(f).
	$$
A direct computation then shows
$$
\|f\|_{0,\a}\leq \frac{32\cdot 4^{-\a}}{\sqrt{(1+\a)\sin(\pi\a)\Gamma(1+2\a)}}\,\pa(f)
\leq \frac{16\sqrt2}{\sqrt{3\sin(\pi\a)}}\,\pa(f).
$$
\end{proof}

%: REFERENCES

\end{document}